\documentclass[reqno,10pt]{amsart}

\numberwithin{equation}{section}

\usepackage{latexsym}
\usepackage{amsmath}
\usepackage{mathtools}
\usepackage{amssymb}
\usepackage{mathrsfs}
\usepackage{graphicx,colordvi}
	\usepackage{upgreek}
\usepackage{ifthen}
\usepackage{color}
\usepackage{graphicx}
\usepackage{bm}

\usepackage[T1]{fontenc}

\numberwithin{equation}{section}

\newtheorem{defi}{Definition}[section]
\newtheorem{theorem}[defi]{Theorem}
\newtheorem{lemma}[defi]{Lemma}

\newtheorem{proposition}[defi]{Proposition}
\newtheorem{remark}[defi]{Remark}

\usepackage{latexsym}
\usepackage{amsmath}
\usepackage{amssymb}

\newcommand{\cA}{{\mathcal A}}

\newcommand{\cD}{{\mathcal D}}
\newcommand{\cE}{{\mathcal E}}
\newcommand{\cF}{{\mathcal F}}

\newcommand{\cB}{{\mathcal B}}

\newcommand{\cL}{{\mathcal L}}
\newcommand{\cM}{{\mathcal M}}
\newcommand{\cR}{{\mathcal R}}

\newcommand{\cT}{{\mathcal T}}

\newcommand{\cU}{{\mathcal U}}
\newcommand{\cV}{{\mathcal V}}

\newcommand{\DD}{{\mathbb D}}
\newcommand{\EE}{{\mathbb E}}
\newcommand{\FF}{{\mathbb F}}

\newcommand{\LL}{{\mathbb L}}
\newcommand{\MM}{{\mathbb M}}
\newcommand{\N}{{\mathbb N}}

\newcommand{\R}{{\mathbb R}}

\newcommand{\PP}{{\mathbb P}}

\newcommand{\ve}{{\varepsilon}}

\newcommand{\sA}{\mathsf{A}}
\newcommand{\sB}{\mathsf{B}}

\newcommand{\sF}{\mathsf{F}}

\newcommand{\sR}{\mathsf{R}}
\newcommand{\sT}{\mathsf{T}}

\newcommand{\bc}{{\bf c}}
\newcommand{\fB}{\mathfrak{B}}
\newcommand{\fD}{\mathfrak{D}}
\newcommand{\fF}{F}  
\newcommand{\fG}{G} 

\newcommand{\e}{\mathrm{e}}

\renewcommand{\epsilon}{\varepsilon}
\newcommand{\eps}{\varepsilon}

\newcommand{\pa}{\partial}
\newcommand{\od}{\mathring{d}}

\renewcommand{\hat}{\widehat}
\newcommand{\wt}{\widetilde}

\newcommand{\na}{\nabla}
\newcommand{\be}{\begin{equation}}
\newcommand{\ee}{\end{equation}}
\newcommand{\bal}{\begin{aligned}}
\newcommand{\eal}{\end{aligned}}
\newcommand{\ba}{\begin{array}{l}}
\newcommand{\ea}{\end{array}}

\newcommand{\HR}[1]{\textcolor{blue}{#1}}

\begin{document}
\title{On nematic electrolytes}

\author{Hengrong Du}
\address{Department of Mathematics and Computer Sciences\\
	Fisk University \\
	Nashville, Tennessee, USA}
\email{hdu@fisk.edu}

\author{Fizay-Noah Lee}
\address{Department of Mathematics\\
        Vanderbilt University\\
        Nashville, Tennessee\\
        USA}
\email{fizay-noah.lee@vanderbilt.edu}

\author{Gieri Simonett}
\address{Department of Mathematics\\
        Vanderbilt University\\
        Nashville, Tennessee\\
        USA}
\email{gieri.simonett@vanderbilt.edu}

\thanks{This work was supported by a grant from the Simons Foundation (\#853237, Gieri Simonett).}

\subjclass[2020]{Primary: 35Q30, 35Q35, 35B35.  Secondary: 76D05}


 \keywords{Nernst-Planck, Navier-Stokes, Poisson equation, ionic electrodiffusion, nematic crystals}

\begin{abstract}
We study a system of nonlinear partial differential equations modeling the electrokinetics of a nematic electrolyte material consisting of various ion species suspended in a nematic liquid crystal within a bounded domain in two or three dimensions. The system couples a Nernst-Planck model for ion concentrations with the Poisson equation for the electrostatic potential, a Navier-Stokes equation for the fluid solvent, and the Ericksen-Leslie equations with general Leslie stress for nematic liquid crystals. We consider the case of isotropic elasticity for the liquid crystal and impose a unit-length constraint on the director field. The no-flux condition for the electrochemical potential leads to a nonlinear (and nonlocal) boundary condition for the ion concentrations. Using the theory of maximal regularity, we prove existence and uniqueness of strong solutions, provide criteria for global existence, and characterize the set of equilibria.
\end{abstract}

\maketitle
\section{Introduction}
\noindent
In this paper we consider a modified version of the system derived in \cite[(2.51)-(2.55)]{CGLW16}
 describing the electrokinetic behavior of a nematic electrolyte consisting of ions that diffuse and are advected within a nematic liquid crystal environment.
The system can be written in terms of the following variables:
\begin{itemize}
\item the concentrations $c_k$,  $k = 1, \ldots  ,N$, with valences $z_k\in \R$,
\item  the electrostatic potential $\Phi$,
\item the macroscopic velocity $v$ of the liquid crystal molecules,
\item the pressure $\pi$,
\item the direction vector $d$, modeling the local orientation of the nematic liquid crystal molecules.
\end{itemize}

 Electrolytes are substances that dissociate into ions when dissolved in a solvent, typically water. They are central to a wide range of physical, biological, and engineering systems. From nerve signal transmission in living organisms to energy storage in batteries, the behavior of electrolytes is governed by coupled physical and chemical phenomena.

 Nematic electrolytes are emerging soft materials that blend the anisotropic, orientational order of nematic liquid crystals with the ionic conductivity characteristic of electrolytes.  This hybrid material system exhibits rich physical behaviors with applications in ionotronics, responsive optics, and electrochemical devices.

 Understanding these systems requires a framework that blends electrostatics, fluid dynamics, and liquid crystals. 
 This framework is encapsulated in a set of fundamental equations: the \textbf{Nernst-Planck equations}, the \textbf{Poisson equation}, the \textbf{Navier-Stokes equations}, and the \textbf{Ericksen-Leslie equations}. 
The interplay between molecular alignment and ionic motion leads to a complex coupled system of equations.

\noindent
 In fact, following the approach in \cite{FRSZ21} we consider a modified version of the system introduced in~\cite{CGLW16}, 
incorporating some simplifications that are commonly used in the mathematical literature on liquid crystals. 
Here, we consider the following system of coupled equations.
Suppose $\Omega$ is a bounded and smooth domain in $\R^n$, $n=2, 3$, with outer unit normal field denoted by $\nu$.
The equations read
\begin{equation}
  \label{main-PDE}
	\left\{
  \begin{aligned}
    \pa_t c_k+  v \cdot  \nabla c_k &= {\rm div}\,(c_k \cD_k \nabla \mu_k),\quad k=1,\ldots,N, &&\text{in} &&\Omega, \\ 
   \nu\cdot ( c_k \cD_k \nabla \mu_k )  &=0, \quad k=1,\ldots, N,&&\text{on} && \partial \Omega, \\
    -{\rm div}\, (\ve(d)\nabla \Phi)&=\Sigma_{k=1}^{N} z_k c_k  &&\text{in} && \Omega,\\
       \Phi &= 0 &&\text{on} && \partial \Omega, \\
    \pa_t v +  v\cdot \nabla  v  +\nabla \pi&=- {\rm div}(\nabla d\odot \nabla d)   + {\rm div}\, \sigma_L 
    + {\rm div} \left((\nabla \Phi\otimes \nabla \Phi)\ve(d) \right)  &&\text{in} && \Omega, \\
    {\rm div}\, v &=0  &&\text{in} && \Omega, \\
     v &= 0 &&\text{on} && \partial \Omega, \\      
    \gamma_1\od+\gamma_2 P(d) D(v)d&= \Delta d+|\nabla d|^2 d   +\ve_a P(d) (\nabla \Phi\otimes \nabla\Phi)d &&\text{in} && \Omega,\\
    |d| & = 1 &&\text{in} && \Omega, \\
    \partial_\nu d &=0 &&\text{on} && \partial \Omega, \\
  (c_k(0), v(0), d(0)) & =  (c^0_k , v^0, d^0) &&\text{in} && \Omega.\\
 \end{aligned}
  \right.
\end{equation}
The quantities appearing in  \eqref{main-PDE} are:
{\begin{itemize}
  \item the ionic \emph{diffusion matrices} $\cD_k$, assumed to be positive definite;
  that is, there exists a constant $\alpha>0$ such that
  \begin{equation}
  \label{D-positive-definite}
  \cD_k \xi \cdot \xi \ge \alpha | \xi |^2,\quad \xi\in \R^n,\quad k=1,\ldots, N,
  \end{equation}

  \item  the electrochemical potentials  $\mu_k$ 
associated to the various ions species $c_k$, given by 
 $$\mu_k:=\ln c_k +z_k \Phi, \qquad k=1,\ldots, N,$$
with $z_k$ the respective valences,

\vspace{2mm}  \item the \emph{dielectric permittivity matrix} $\ve(d)$, given by
  $$\ve(d):=\ve_{\|} d\otimes d+\ve_{\perp}({\rm I}-d\otimes d)=\ve_{\perp}{\rm I} +\ve_a d\otimes d,\qquad \ve_a = \ve_{\|} -\ve_{\perp},$$
  where the constants
  $\ve_{\|}$ and $\ve_{\perp}$ represent the parallel and perpendicular dielectric permittivity with respect to $d$, 
  and where we assume 
  \begin{equation}
  \label{permittivity}
  \ve_{\perp} >0,\qquad \ve_a\ge 0,
  \end{equation}
  \item  
 the Leslie stress tensor $\sigma_L$, given by 
  \begin{equation}
   \label{Leslie-stess}
   \begin{aligned}
  	\sigma_L=\sigma_L(v,d) &:= \alpha_1(D(v)d\cdot d)d\otimes d+\alpha_2 \od\otimes d+\alpha_3 d\otimes \od+\alpha_4 D(v) \\
	&\quad 
	+\alpha_5 D(v)d\otimes d+\alpha_6 d\otimes D(v)d,
 \end{aligned}
  \end{equation}
 where  the Leslie coefficients $\alpha_1, \ldots, \alpha_6$ are material constants.
Here, $\alpha_4 D(v)$ represents the classical Newtonian stress tensor, while the other terms
represent the additional stress produced by the interaction of the liquid crystal molecules,
  
\vspace{2mm} \item
   the co-rotational derivative $\od $ of $d$, given by 
   \begin{equation}
   \label{circ-d}
    \od =\od(v):= \pa_t d+ v \cdot \nabla d-\Omega(v)d,
    \end{equation}
   
\item
   the symmetric and anti-symmetric parts of the velocity gradient, given by
    $$D(v):=\frac{1}{2}(\nabla v+(\nabla v)^{\top})  \quad \text{and}\quad \Omega(v):=\frac{1}{2}(\nabla v-(\nabla v)^{\top}),$$

\item 
the orthogonal projection of $\R^n$ onto ${\rm span}\{d^{\perp}\}$, given by
\begin{equation*}
P(d) := {\rm I} - d \otimes d,
\end{equation*}

\item
the Ericksen stress tensor $\nabla d \odot \nabla d$, given by
$$[\nabla d \odot \nabla d]_{ij} = \partial_i d \cdot \partial_j d,\quad 1\le i,j \le n.$$

\item
Finally, we have $[a\otimes b]_{ij} = a_ib_j$  for vectors $a,b\in \R^n$.
\end{itemize}
We have set various physical constants equal to 1 that would otherwise appear in \eqref{main-PDE}.

\medskip
The  no-flux (blocking) boundary conditions 
 $\nu\cdot {\rm tr}_{\pa\Omega}(c_k \cD_k \nabla \mu_k) =0 $ for the ion concentrations
 correspond to the property that ions cannot cross the boundary $\partial\Omega$
and, thus, their total concentrations remain constant in time.
 This boundary condition can  be restated as
 \begin{equation}
 \label{BCk}
 B_k(\textbf{c}, d)c_k := \nu \cdot {\rm tr}_{\partial\Omega} (\cD_k \nabla c_k) + \nu\cdot {\rm tr}_{\partial\Omega} (z_k\cD_k \nabla \Phi (\textbf{c}, d ))c_k=0,
\end{equation}
with
 $\bc= (c_1,\ldots ,c_N)$ and $k=1,\ldots, N$,
where $\Phi=\Phi(\bc, d)$ is the solution of the elliptic problem
\begin{equation}
\label{equation-phi}
\left\{
\begin{aligned}
-{\rm div}\, (\ve(d)\nabla \Phi)&=\sum_{k=1}^{N} z_k c_k  &&\text{in} &&\Omega,\\
       \Phi &= 0 &&\text{on} &&\partial \Omega. \\
\end{aligned}
\right.
\end{equation}
We note that this results in a nonlinear and nonlocal boundary condition for each ion concentration $c_k$, as 
$\Phi $ is obtained by solving an elliptic equation.

\medskip
We should also like to point out that the  presence of the projection $P(d)$ in the evolution equation for the director $d$
ensures that the constraint $|d|=1$  is preserved, provided it is imposed initially.

\medskip\noindent
In summary, the coupled system of equations \eqref{main-PDE} consists of

\begin{itemize}
    \item the {Nernst-Planck equations} for the flux of the ion concentrations  $c_k$,
    \item the {Poisson equation} for the electrostatic potential $\Phi$,
    \item the {incompressible Navier-Stokes equations} for the fluid solvent $v$, and 
    \item the {Eriksen-Leslie equations} for the director $d$.
\end{itemize}
\goodbreak
\bigskip\noindent
We employ the usual compatibility  conditions in the Ericksen-Leslie theory of elastic, isotropic, nematic liquid crystals, namely
\begin{equation}
\label{gamma-alpha}
\begin{aligned}
\gamma_1 &= \alpha_3 - \alpha_2,\qquad \\
\gamma_2 &= \alpha_6-\alpha_5.
\end{aligned}
\end{equation}
Moreover, we set
\begin{equation}
\label{beta}
\beta := \alpha_5 +\alpha_6.
\end{equation}

\medskip\noindent
For the remainder of the manuscript, we will impose the following conditions on the Leslie coefficients
\begin{equation}
\label{positivity}
\gamma_1>0, \quad \alpha_4>0, \quad \alpha_1\ge 0,  \quad 
4\beta\gamma_1  - (\gamma_2 +\alpha_2 +\alpha_3)^2 >0.
\end{equation}
\begin{remark}
\label{rem: Parodi}
{\rm (a)} 
The conditions in \eqref{positivity} are needed both to establish the local well-posedness of \eqref{main-PDE} and to prove the dissipativity of the energy in
 Proposition~\ref{pro: Lyapunov-equilibria}.
See also Remark~\ref{rem: Leslie}.

\smallskip\noindent
{\rm (b)}  
The Parodi relation states that
$\gamma_2 = \alpha_6-\alpha_5= \alpha_2+\alpha_3. $
In this case, conditions \eqref{positivity} simplify to 
\begin{equation*}
\gamma_1>0, \quad \alpha_4>0, \quad \alpha_1 \ge 0, \quad 
\beta\gamma_1  - \gamma_2 ^2 > 0.
\end{equation*}
We emphasize that our results do not require the Parodi relation to hold. 
\end{remark}

\medskip\noindent
There exists a large literature on various subsystems of the coupled system \eqref{main-PDE}. 
Perhaps most notably, the Navier-Stokes equations are the subject of very active mathematical research, both past and present. 
We refer the reader to \cite{BeVi22} for a modern survey of the field. Also see \cite{CoFo88, Te01}. 
For the Nernst-Planck equations, coupled to the Poisson and Navier-Stokes equations, global existence of weak and strong solutions and long-time behavior/stability have been established in both two and three dimensions, under a wide range of assumptions, by many authors. 
We refer the reader to the works \cite{AbIg21, BFS14, CIL22, CIL22b, CoIg19, FiSa17, Lee23, Lee24, Lee25, Sch09} and references therein. 
We note that the breadth of the literature for the Nernst-Planck-Poisson-Navier-Stokes equations is partially due to the wide range of choices one can make on the boundary conditions and other parameters (e.g., number of ionic species, spatial dimension). 
In contrast to our current work, however, the vast majority of this literature focuses on the special case of isotropic diffusion matrices and constant, isotropic dielectric permittivity matrices.

\medskip\noindent
Likewise,  there is a substantial body of work  for the Ericksen-Leslie equations for nematic liquid crystals, starting with the seminal work of 
Ericksen~\cite{Eri62} and Leslie~\cite{Les66, Les68} in the 1960s.
See also the comprehensive exposition in the monographs \cite{SoVi12, Vir94}.

The Ericksen–Leslie system provides a continuum description of the dynamics of nematic liquid crystals, coupling fluid velocity with the orientation of rod-like molecules. Since its formulation in the seminal works of Ericksen and Leslie, the system has attracted considerable attention in mathematical analysis, particularly with regard to the existence, uniqueness, and regularity of weak and strong solutions. Due to its complexity - arising from nonlinear coupling, geometric constraints, and anisotropic stresses - much of the early progress focused on simplified models that retain essential features while being more tractable.
One of the most influential simplifications is the so-called simplified Ericksen–Leslie system,  obtained by neglecting certain Leslie stress terms and assuming equal elastic constants. In this setting, the director field satisfies a transported harmonic map heat flow, coupled with the Navier–Stokes equations. 
For the full Ericksen–Leslie system, the analysis is considerably more involved due to the presence of additional stress terms.

Without going into further detail, we mention the contributions \cite{CRW13, DHW22, FFRS12, HLW25, HNPS16, HiPr17, HiPr18,HiPr19, HLW14, Lin91,LinLiu95, LinLiu00, LinWa08, LinWa14, MGL14, WXL13, WZZ13}, which address key aspects of existence and regularity of solutions, 
also in the context of the  Ginzburg-Landau approximation.  This list is far from being complete.

\medskip\noindent
The system~\eqref{main-PDE} introduces additional complexity and challenges beyond those already present in the subsystems described above. For instance, the coefficients of the elliptic Poisson equation now also depend on the director field. This, in turn, implies that the boundary conditions for the concentrations $c_k$
 depend in a nonlinear and nonlocal manner on both the concentrations themselves and the director $d$. 
 
 \medskip\noindent
System~\eqref{main-PDE} was derived in~\cite{CGLW16}, based on a continuum model for electrolytes in nematic liquid crystals that combines fluid dynamics, electrostatics, and director-field evolution. In that work, the evolution equation for the director $d$ is second order in time, see in particular equations (2.51)-(2.55),  whereas we consider here the 
Ericksen–Leslie equations for the director $d$ (with the additional  coupling term  $\ve_a P(d) (\nabla \Phi\otimes \nabla\Phi)d$)).

\smallskip\noindent
One way of approaching system \eqref{main-PDE} is from an energetic perspective, which asserts existence of an energy functional that is dissipative in time 
(see~Proposition~\ref{energy-est}). Weak solutions, based on uniform $L_2$-estimates from the energy, are natural candidates for a mathematical analysis. 
However, due to the nonlinear unit length constraint on 
the director,  global existence of weak solutions remains open even for the simplified Ericksen--Leslie system in three dimensions; see for instance the survey article \cite{LinWa14}. 
To circumvent this difficulty, one may relax the unit-length constraint and instead introduce a penalization potential that energetically favors $|d|=1$, leading to the so-called Ginzburg-Landau approximation. Within this framework, weak solutions can be constructed for the Ericksen--Leslie system; see, for example, \cite{LinLiu95, WXL13}. 

\noindent
Alternatively, one may employ a singular bulk potential that enforces a maximum principle of the form $|d|\le 1$, as in  \cite{FRSZ21}, where global-in-time weak solutions are obtained via weak sequential compactness arguments. 
In more detail, the authors in~\cite{FRSZ21} consider the system \eqref{main-PDE} in the setting of a Ginzburg-Landau potential of singular type for the director $d$,
 on the flat torus $\mathbb T^3$.  They analyze   the two-species case and prove a priori estimates that lead to weak sequential stability results.

\smallskip\noindent
In ongoing work \cite{DLS25}, we consider  \eqref{main-PDE} in two space dimensions and prove global existence of weak solutions. 
The approach relies on passing to the limit in a Ginzburg-Landau approximation
with a singular potential. Key steps include deriving uniform energy estimates and establishing the strong convergence of the director field utilizing a Pohozaev-type identity to handle the lack of compactness for the 
Ericksen stress tensor.

\smallskip\noindent
In this paper, we employ the theory of maximal regularity for quasilinear parabolic equations,
see for instance~\cite{PrSi16}, 
 to establish local well-posedness of strong solutions to system \eqref{main-PDE} in spatial dimensions
 $n \ge 2$.  The main result concerning existence  of solutions is contained in Theorem~\ref{thm: main}.
We show that strong  solutions $z=(\bc,v,d)$ with initial values $z^0=(\bc^0, v^0, d^0$) exist on a maximal time interval $[0, T_+(z^0))$, provided the Leslie coefficients 
 satisfy~\eqref{positivity}, and we show in addition that
 \begin{itemize}
\item solutions regularize and are smooth (and even analytic) in time on $(0, T_+(z^0))$.
 \vspace{2mm}
\item
 \eqref{main-PDE} generates a local Lipschitz continuous semiflow on a manifold $\cM_\mu$ that incorporates the nonlinear boundary conditions.
 \vspace{2mm}
 \item
 If $ | d^0|=1$, then $| d(t)|=1$ for each $t\in [0, T_+(z^0))$.
 \vspace{2mm}
 \item
 If $c^0_k > 0$ in $\bar\Omega$, then $c_k(t)>0 $ in $\bar\Omega$ as well for each $t\in [0,T_+(z^0))$. 
 \end{itemize} 
 
 \smallskip\noindent
 More precise statements can be found in Theorem~\ref{thm: main}.
  In Proposition~\ref{energy-est}, we establish an important energy estimate for the energy functional $E(t)$  associated with \eqref{main-PDE} and defined in~\eqref{E}.
   In Proposition~\ref{pro: Lyapunov-equilibria}, we show that the energy functional $E$  has a dissipative structure, 
  provided the Leslie coefficients satisfy \eqref{positivity}. This implies that  $-E$ constitutes a strict Lyapunov functional.
In addition, we  provide a complete  characterization of the equilibria of \eqref{main-PDE}.
In Theorem~\ref{thm:global existence}, we establish criteria for global existence of solutions.
In Appendix~\ref{Appendix A} we state and prove some technical results that are essential for our approach.
For instance, in Proposition~\ref{pro: Phi-time} we establish time regularity of solutions of a time-dependent elliptic problem and study the dependence of  these solutions on $(\bc, d)$.
These results are also of independent interest.
Finally, in Appendix~\ref{Appendix B} we provide a proof for the energy estimate stated in Proposition~\ref{energy-est}.
 
\medskip\noindent
 A first and essential step in the study of \eqref{main-PDE} is the reformulation of the coupled system  
 as a quasilinear parabolic evolution equation for the dynamic variables $(\bc,v,d)$. \\
The analysis of the resulting quasilinear evolution equation is far from straightforward,  
since the equations are strongly coupled and include an elliptic equation for the electrostatic potential $\Phi$, 
 whose coefficients depend on $(\bc,d)$.
 Additional difficulties arise as the velocity equation $\eqref{main-PDE}_5$ contains, through the term
$$
{\rm div}\,(\alpha_2 \od \otimes d + \alpha_3 d \otimes \od),
$$
first-order spatial derivatives of $\od$. Via the equation for $\od$ in $\eqref{main-PDE}_8$, these terms give rise to third-order spatial derivatives of the director $d$, an unusual feature.  
Moreover,  the expression ${\rm div}\,\sigma_L $ appearing in $\eqref{main-PDE}_5$ contains additional second order terms in $v$ besides $\alpha_4 \Delta v$
(which usually constitutes the dissipative term in the Navier--Stokes equations).

\medskip\noindent
All of this leads to significant challenges in the mathematical analysis of the system. In our approach, results obtained in ~\cite{HiPr17} turned out to be particularly important and useful for the analysis of the Ericksen-Leslie subsystem in \eqref{main-PDE}.
In order to obtain solutions via a fixed point argument, and to establish additional time regularity and the semiflow property,  we show that the full linearization introduced  in \eqref{linearized} gives rise to a  linear isomorphism between carefully chosen function spaces. This means that the non-autonomous system  \eqref{linearized} has the property of maximal regularity. 
The proof of this fact is technically demanding  and is carried out in Section~\ref{sec: linearized} and in Appendix~\ref{Appendix A}.

\bigskip\noindent
\textbf{Notation:} 
For the readers' convenience, we list here some notation and conventions used throughout the manuscript.

\medskip\noindent
For two vectors $a,b\in\R^n$, the Euclidean inner product is denoted by $a\cdot b$.
Given two matrices $A,B\in {\mathbb M}_n$, the Frobenius matrix inner product $A:B$ is given by
$
A:B={\rm trace} (AB^{\sT}),
$
where $B^{\sT}$ is the transpose of $B$.
Hence, with $A=[a_{ij}] $ and $B=[b_{ij}]$, $$A:B= \sum_{i,\, j} a_{ij} b_{ij}.$$
Suppose $\Omega$ is an open subset of $\R^n$.
If $u\in C^1(\Omega;\R^n)$, we set $\nabla u(x)=\partial_ju(x)\otimes e_j$
for $x\in\Omega$.
Hence, for $u=(u_1,\cdots , u_n)\in C^1(\Omega; \R^n)$, we have 
$$[\nabla u(x)]_{ij}= \partial_j u_i(x), \;\; 1\le i, j\le n, \;\; x\in \Omega.$$
We note that $\nabla u(x)$ corresponds to the Fr\'echet derivative of $u$ at $x\in\Omega$. 

If $A\in C^1(\Omega;\MM^n)$, its divergence ${\rm div}A$ is the vector function defined by
\begin{equation}
\label{divergence-matrix}
({\rm div }A)(x)=\partial_j (A(x)e_j), \;\; x\in \Omega.
\end{equation}
Hence, if $A=[a_{ik}]\in C^1(\Omega;\MM^n)$, its divergence is given by
 $${\rm  div} A(x) =\partial_j a_{ij}(x)e_i,\quad x\in\Omega.  $$
Thus, ${\rm div}A$ is defined by taking the divergence along rows of $A$. 
We note   that 
 \eqref{divergence-matrix} implies
\begin{equation}
\label{divergence-property}
({\rm div }A)\cdot u = {\rm div} (A^{\sf T}u)- A:\nabla u, \quad A\in C^1(\Omega;\MM^n), \ u\in C^1(\Omega;\R^n).
\end{equation}

For a matrix $A\in C^1(\Omega;\MM^n)$, we set 
$|\nabla A|^2=\partial_j A: \partial_jA.$

\smallskip
For functions 
$f,g\in L_2(\Omega; \R^m)$, 
$$(f|g)_\Omega =\int_\Omega f\cdot g\,dx$$ denotes the $L_2$-inner product.
For any  Banach space $X$, $s\ge 0$, $p \in (1,\infty)$,  
$W^s_p(\Omega;X)$ denote the  $X$-valued  Sobolev(-Slobodeckij)   spaces.
When the choice of $X$ is clear from the context, we will just write $W^s_p(\Omega)$. 
 In particular, we always write $W^s_p(\Omega)$ for $W^s_p(\Omega;\R)$. 

\goodbreak
\medskip
Given any $T\in (0,\infty]$, we will denote the interval $(0,T)$ by 
$J_T$.
For   $p\in (1,\infty)$ and $\mu\in (0,1]$, the $X$-valued $L_p$-spaces with temporal weight are defined by
$$
L_{p,\mu}(J_T;X):=\left\{ f: (0,T)\to X: \, [t \mapsto t^{1-\mu}f(t)] \in L_p(J_T;X)  \right\}.
$$
Similarly, for $k\in \N$,
$$
H^k_{p,\mu}(J_T;X):=\left\{ f \in    H^k_{1,loc}(J_T;X):\,  \pa_t^j f\in L_{p,\mu}(J_T;X), \, j=0,1,\ldots,k \right\}.
$$
For $s\in (0,1)$, the Sobolev-Slobodeckij spaces   with temporal  weights  are defined as
$$
W^s_{p,\mu}(J_T;X):= \{ u \in L_{p,\mu}(J_T;X): \, 
\|u\|_{W^s_{p,\mu}(J_T;X)} <\infty \} ,
$$
where  
\begin{equation}
\label{seminorm}
\begin{aligned}
& \|u\|_{W^s_{p,\mu}(J_T;X)} =  \|u\|_{L_{p,\mu}(J_T;X)}  + [u]_{W^s_{p,\mu}(J_T;X)}, \\
&[u]_{W^s_{p,\mu}(J_T;X)} := \left(\int_0^T  \int_0^t \tau^{p(1-\mu)} \frac{\| u(t)- u(\tau)\|_X^p}{(t-\tau)^{s p+1}}\, d\tau  d t \right)^{1/p},
\end{aligned}
\end{equation}
see \cite[Formula~(2.6)]{MeSc12}. 
$\|\cdot\|_{W^s_{p,\mu}(J_T;X)}$ is termed the intrinsic norm of  $W^s_{p,\mu}(J_T;X)$.
It holds that 
\begin{equation}
\label{Ws-BUC}
W^{s}_{p,\mu}(J_T; X)\hookrightarrow BU\!C(\bar J_T ;X),\quad s> 1-\mu +1/p,
\end{equation}
 see for instance \cite[Proposition 2.10]{MeSc12}.

For any two Banach spaces $X$ and $Y$, the notation $\cL(X,Y)$ stands for the set of all bounded linear operators from $X$ to $Y$ and $\cL(X):=\cL(X,X)$. 
${\rm Isom}\,(X,Y)$ denotes the subset of $\cL(X,Y)$ consisting of linear isomorphisms from $X$ to $Y$.

\section{The modified system}\label{sec: modified}
\noindent
We will introduce a modified version of system \eqref{main-PDE}, where we drop  the constraint $|d|=1$ and replace $\mu_k $ by $ (\ln c_k + z_k \Phi)$.
Before doing so, we first remark that in case $|d|=1$, 
\begin{equation}
\label{POmega}
P(d)\partial_t d = \partial_t d ,\quad  P(d) (v\cdot \nabla d )= v\cdot \nabla d  ,\quad  P(d)(\Omega(v) d) = \Omega(v)d , \quad   P(d)\od =\od,
\end{equation}
where $\od$ and  $ \Omega(v)$ are introduced in~\eqref{circ-d}.
In fact, the assertion $P(d)(\Omega(v)d)= \Omega(v)d$ is always satisfied.
Consequently,  in case $|d|=1$,  we can rewrite the Leslie stress tensor~\eqref{Leslie-stess} in the following equivalent form:
\begin{equation}
   \label{Leslie-stress-2}
   \begin{aligned}
  	\sigma_L=\sigma^m_L= \sigma^m_L(v,d) &= \mu_0 (D(v)d \cdot d)d\otimes d+\alpha_2 P(d)\od\otimes d+\alpha_3 d\otimes P(d)\od+\alpha_4 D(v) \\
	&\quad +\alpha_5 (P(d)D(v)d)\otimes d+\alpha_6 d\otimes P(d) D(v)d,  \\
	 \mu_0 &= \alpha_1 + \alpha_5+\alpha_6.
 \end{aligned}
 \end{equation}
 Employing  \eqref{BCk} and \eqref{equation-phi}, the modified system then reads as 
\begin{equation}
  \label{main-PDE-2}
	\left\{
  \begin{aligned}
    \pa_t c_k+  v \cdot \nabla c_k &= {\rm div}(\cD_k \nabla c_k)+ {\rm div}(z_k c_k \cD_k \nabla \Phi) &&\text{in} && \Omega, \\ 
    B_k(\bc,d) c_k  &=0  &&\text{on} && \partial \Omega, \\
    \pa_t v +  v\cdot \nabla  v  +\nabla \pi&=- {\rm div}(\nabla d\odot \nabla d) +{\rm div}\, \sigma^m_L 
    + {\rm div} \left((\nabla \Phi\otimes \nabla \Phi) \ve(d) \right)  \!\! &&\text{in} && \Omega, \\
    {\rm div}\, v &=0  &&\text{in} && \Omega, \\
     v &= 0 &&\text{on} && \partial \Omega, \\      
    \gamma_1\od+\gamma_2 P(d)D(v)d&= \Delta d+|\nabla d|^2 d   +\ve_a P(d)(\nabla \Phi\otimes \nabla\Phi)d &&\text{in} && \Omega,\\
    \partial_\nu d &=0 &&\text{on} && \partial \Omega, \\
  (c_k(0), v(0), d(0)) & =  (c^0_k , v^0, d^0) &&\text{in} && \Omega.\\
 \end{aligned}
  \right.
\end{equation}
\begin{remark}
\label{rem: equivalent}
It will be shown in Theorem~\ref{thm: main} that the constraint $ |d(t)| = 1 $ is preserved, provided it holds for the initial datum $ d^0 $.
Moreover, it will be shown that $ c_k(t) $ remains strictly positive if this property holds for the initial value $ c_k^0$.
Once these assertions are established, problems \eqref{main-PDE} and \eqref{main-PDE-2} are seen to be equivalent. 
\end{remark}

\noindent
Setting 
$ z=(\bc, v, d)\quad \text{with} \quad \bc= (c_1, \ldots, c_N)$
we rewrite \eqref{main-PDE-2} in equivalent form

\begin{equation}
  \label{main-PDE-3}
	\left\{
  \begin{aligned}
    \pa_t c_k  -  {\rm div}(\cD_k \nabla c_k)&= F_k (z) &&\text{in} && \Omega, \\ 
    B_k(z) c_k  &=0  &&\text{on} && \partial \Omega, \\
    \pa_t v  -  A_v(d)v + R_1(d) \partial_t d +\nabla \pi&= F_v(z)  &&\text{in} && \Omega, \\
    {\rm div}\, v &=0  &&\text{in} && \Omega, \\
     v &= 0 &&\text{on} && \partial \Omega, \\      
    \gamma_1\partial_t d +R_0(d)v - \Delta d&= F_d(z) &&\text{in} && \Omega,\\
    \partial_\nu d &=0 &&\text{on} && \partial \Omega, \\
  (c_k(0), v(0), d(0)) & =  (c^0_k , v^0, d^0) &&\text{in} && \Omega,\\
 \end{aligned}
  \right.
\end{equation}
where we collected the highest order terms on the left side, while the right side contains all the remaining lower order terms.
Using  \eqref{divergence-matrix}, \eqref{Leslie-stress-2} and assuming that $d\in C(\Omega; \R^n)$, one verifies that 
for $v\in C^2(\Omega; \R^n)$ with ${\rm div}\,v=0$,
\begin{equation}
\label{Av} 
A_v(d)v =  \alpha_4 \Delta v +  \sum_{l=0}^4 A_l (d) v,
\end{equation} 
with 
\begin{equation}
\label{Al-description}
\begin{aligned}
A_0(d) v &= \mu_0 ((\partial_j \nabla v) d \cdot d) (d \otimes d)e_j,  \\[2pt] 
A_1(d) v &=  \left(\frac{\alpha_5  + \alpha_2}{2}\right) \big( (P(d) (\partial_j  (\nabla v)^{\sf T}) d )\otimes d\big)e_j,  \\[2pt] 
A_2(d) v &=  \left(\frac{\alpha_5 - \alpha_2}{2}\right)\big( (P(d) ( \partial_j  \nabla v)d ) \otimes d\big)e_j,   \\[2pt] 
A_3(d) v &= \left(\frac{\alpha_6  +  \alpha_3}{2}\right)\big(d \otimes P(d) (\partial_j  (\nabla v)^{\sf T})d\big)\e_j,  \\[2pt] 
 A_4(d) v &=  \left(\frac{\alpha_6 - \alpha_2}{2}\right) \big(d\otimes P(d) (\partial_j  \nabla v)d \big)e_j.
\end{aligned}
\end{equation}
It follows from \eqref{POmega} that
\begin{equation}
\label{R0}
\begin{aligned}
R_0(d)v 
& = -\gamma_1 \Omega(v) d +\gamma_2 P(d) D(v) d = -\gamma_1 P(d) \Omega(v) d +\gamma_2 P(d) D(v)d \\
& = \left(\frac{\gamma_2-\gamma_1}{2}\right) P(d) (\nabla v )d + \left(\frac{\gamma_2+\gamma_1}{2}\right) P(d)( \nabla v)^{\sf T} d,  \\
\end{aligned}
\end{equation}
and moreover,
\begin{equation}
\label{R1}
\begin{aligned}
R_1(d) \partial_t d
& =   -\big(\alpha_2  P(d)\partial_j \partial_t d \otimes d+ \alpha_3  d\otimes  P(d) \partial_j\partial_t d \big)e_j . \\
\end{aligned}
\end{equation}
For the terms on the right hand side of \eqref{main-PDE-3}, we have
\begin{equation}
\begin{aligned}
\label{Fc-Fv-Fd}
F_k(z) & = {\rm div}\, (z_k c_k \cD_k \nabla \Phi (z)) - v\cdot \nabla c_k, \\
F_v(z) & = - {\rm div}\,( \nabla d \odot \nabla  d)   + {\rm div}\, \sigma^m_L(d, v)  - A_v(d)v  + R_1(d)\partial_t d  \\
           & \quad  + {\rm div} \left((\nabla \Phi\otimes \nabla \Phi)  \ve(d) \right)  - v\cdot \nabla v ,\\
F_d(z) & = |\nabla d|^2 d  +  \ve_a P(d)(\nabla \Phi (z)\otimes \nabla \Phi(z))d  -  \gamma_1  (v\cdot \nabla d).
\end{aligned}
\end{equation}
We should  like to point out  that for $d\in C^1(\Omega; \R^n)$,
$$R_1(d)=R_1(d, \nabla)\quad\text{and} \quad R_0(d)=R_0(d,\nabla)$$
 both act as first order differential operators in space.
Hence, the term $R_1(d)\partial_t d$ in \eqref{main-PDE-3} implies that the evolution equation for  $v$ actually 
contains {\em third-order spatial derivatives of~$d$},
as
 \begin{equation}
 \label{substitution}
 \gamma_1 R_1(d)\partial_t d = R_1(d) [\Delta d -R_0(d) v + F_d(z)],
 \end{equation}
and $\Delta d$ already involves second order derivatives.
 Moreover, $R_1(d) R_2(d)v$ acts  as second order operator on $v$.
Hence the usual term $-\mu\Delta v$ in the Navier-Stokes equations is here replaced by
$$ 
-\big(\alpha_ 4 \Delta + \sum_{l=0}^4 A_l(d) +\gamma^{-1}_1 R_1(d)R_0(d)\big) v,
$$
see also \eqref{linear-abstract-tilde}-\eqref{A(z)} below.
This all adds significant challenges to a mathematical analysis of system~\eqref{main-PDE-3}.
 
 \medskip
 In order to deal with the nonlinearities $(F_k(z), F_v(z), F_d(z))$ and the boundary operators $B_k(z)$ in \eqref{main-PDE-3},
we will in the sequel frequently assume that
\begin{equation}
\label{mu-condition}
n+2<p \quad \text{and} \quad \frac{1}{2}+\frac{n+2}{2p} <\mu \le 1.
\end{equation}
In particular, this guarantees the validity of the embedding
\begin{equation*}
W^{2\mu-2/p +j}_p(\Omega)   \hookrightarrow  C^{1+j}(\overline{\Omega} ),\qquad j=0,1.
\end{equation*}

\section{The linearized problem and maximal regularity}\label{sec: linearized}
\noindent
We now consider the following autonomous linear version of  \eqref{main-PDE-3}
\begin{equation}
  \label{linear-MR}
	\left\{
  \begin{aligned}
    \pa_t c_k  -  {\rm div}(\cD_k \nabla c_k)&= f_k &&\text{in}  &&\Omega, \\ 
    B_k( \wt \bc, \wt d) c_k  &=g_k  &&\text{on} &&\partial \Omega, \\
    \pa_t v  -  A_v(\wt d)v + R_1(\wt d) \partial_t d +\nabla \pi&= f_v &&\text{in} &&\Omega, \\
    {\rm div}\, v &=0  &&\text{in} && \Omega, \\
     v &= 0 &&\text{on} &&\partial \Omega, \\      
    \gamma_1\partial_t d +R_0(\wt  d)v - \Delta d&= f_d  &&\text{in} &&\Omega,\\
    \partial_\nu d &=0 &&\text{on} &&\partial \Omega, \\
  (c_k(0), v(0), d(0)) & =  (c^0_k , v^0, d^0) &&\text{in} && \Omega,
 \end{aligned}
  \right.
\end{equation}
where $(\wt \bc, \wt d)\in L_p(\Omega; \R^N)\times C^1(\bar\Omega; \R^n)$ and $k=1,\ldots, N$. We will establish results on optimal regularity of solutions; 
that is, we establish {\em maximal regularity results} for the linear system~\eqref{linear-MR}.

Before stating the result,
 it will be instructive to motivate our choice of function spaces.
For the ion concentrations $c_k$, we require 
$$c_k\in H^1_{p,\mu}(J_T; L_p(\Omega))\cap L_{p,\mu}(J_T; H^2_p(\Omega)),$$
with $p\in (1,\infty)$ and $\mu\in (1/p, 1]$, which then implies $f_k\in L_{p,\mu}(J_T; L_p(\Omega))$.
The regularity of $c_k$ yields
 \begin{equation}
 \label{F-mu-1}
 g_k \in  \FF^1_\mu(J_T):=W^{1/2 -1/2p}_{p,\mu}(J_T; L_p(\partial\Omega; \R ))\cap L_{p,\mu}(J_T; W^{1-1/p}_p(\partial\Omega)),
 \end{equation}
 see 
 \cite[Theorem 3.4, Theorem 4.5]{MeSc12}, or  \cite[Theorem 6.3.2]{PrSi16} .
Next we require that
$$v\in H^1_{p,\mu}(J_T; L_p(\Omega; \R^n))\cap L_{p,\mu}(J_T; H^2_p(\Omega; \R^n)), \quad 
\nabla \pi  \in L_{p,\mu}(J_T; L_p(\Omega; \R^n)).$$
This implies 
 $f_v\in L_{p,\mu}(J_T; L_p(\Omega;\R^n))$.
  Due to the presence of the term $R_1(\wt d)\partial_t d $ in the equation for $v$, we finally require
 $$d\in H^1_{p,\mu}(J_T; H^1_{p}(\Omega; \R^n))\cap L_{p,\mu}(J_T; H^3_p(\Omega; \R^n)),$$
and then $f_d\in L_{p,\mu}(J_T; H^1_p(\Omega; \R^n))$.
The embedding 
$$H^1_{p,\mu}(J_T; H^j_p(\Omega; \R^\ell)) \cap L_{p,\mu}(J_T; H^{2+j}_p(\Omega; \R^\ell))
 \hookrightarrow BU\!C(\bar J_T; W^{2(\mu- 1/p)+j}_p(\Omega; \R^\ell)), \quad j=0,1,$$
 see for instance \cite[Proposition 3.1]{PrSi04}, implies 
 $$c^0_k\in W^{2(\mu- 1/p)}_p(\Omega ),\quad v^0\in W^{2(\mu- 1/p)}_p(\Omega ;\R^n), \quad
 d^0\in W^{2(\mu- 1/p)+1}_p(\Omega ;\R^n).
 $$
Within the above regularity setting, we need to show that the boundary operators $B_k(\wt \bc, \wt d)$ are properly defined.
To this end, we establish the following result. 
\begin{lemma}
\label{elliptic-tilde}
Suppose that \eqref{permittivity}.
Then the elliptic boundary value problem 
    \begin{equation*}
    \label{equation-phi-2}
    \left\{
    \begin{aligned}
    -{\rm div}\, (\ve(\wt d)\nabla \Phi)&=\sum_{k=1}^N z_k \wt c_k &&\text{in} &&\Omega,\\
           \Phi &= 0 &&\text{on} &&\partial \Omega, \\
    \end{aligned}
    \right.
    \end{equation*}
has for each 
$$(\wt \bc,\wt d)\in L_p(\Omega; \R^N)\times C^1(\bar\Omega; \R^n),\quad \wt\bc=(\wt c_1,\ldots , \wt c_N),$$
a unique solution $\Phi= \Phi(\wt \bc, \wt d)\in H^{2}_{p,D}(\Omega)$, where
$H^{2}_{p,D}(\Omega):=\{u\in H^{2}_p(\Omega): u=0 \text{ on } \partial\Omega\}$.
The mappings 
\begin{equation*}
\begin{aligned}
&[(\wt \bc, \wt d) \mapsto \Phi (\wt \bc, \wt d)] :  L_p(\Omega; \R^N)\times C^1(\bar\Omega; \R^n) \to H^2_p(\Omega), \\
&[(\wt \bc, \wt d) \mapsto B_k(\wt \bc, \wt d)]:   
L_p(\Omega; \R^N)\times C^1(\bar\Omega; \R^n)\to \cL(\EE^1_{1,\mu}(J_T),\, \FF^1_\mu(J_T))
\end{aligned} 
\end{equation*}
are  real analytic,  where $\EE^1_{1,\mu}(J_T)= H^1_{p,\mu}(J_T; L_p(\Omega))\cap L_{p,\mu}(J_T; H^2_p(\Omega))$
and $\FF^1_{\mu}(J_T)$ is defined in \eqref{F-mu-1}. 
\end{lemma}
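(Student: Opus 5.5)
We treat the elliptic problem as a linear operator equation whose coefficient operator depends analytically on $\wt d$. For $\wt d\in C^1(\bar\Omega;\R^n)$ set
\[
\cA(\wt d)\Phi := -{\rm div}\,(\ve(\wt d)\nabla\Phi) = -\ve_\perp\Delta\Phi - \ve_a\,{\rm div}\big((\wt d\otimes\wt d)\nabla\Phi\big).
\]
Expanding the divergence shows that $\cA(\wt d)$ is a second order operator with principal coefficients $\ve_\perp\delta_{ij}+\ve_a\wt d_i\wt d_j\in C^1(\bar\Omega)$ and first order coefficients $\ve_a\partial_j(\wt d_i\wt d_j)\in C(\bar\Omega)$. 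The map $\wt d\mapsto \cA(\wt d)\in\cL(H^2_{p,D}(\Omega),L_p(\Omega))$ is a polynomial of degree two, hence real analytic.

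Uniform ellipticity follows from \eqref{permittivity}:
\[
\ve(\wt d)\xi\cdot\xi=\ve_\perp|\xi|^2+\ve_a(\wt d\cdot\xi)^2\ge\ve_\perp|\xi|^2,
\]
and this holds without any unit-length assumption on $\wt d$. Standard $L_p$ theory for Dirichlet problems with $C^1$ (in particular uniformly continuous) principal coefficients and bounded lower order coefficients (e.g. Gilbarg--Trudinger, Thm.~9.15 and Lemma~9.17) shows that $\cA(\wt d):H^2_{p,D}\to L_p$ is Fredholm of index zero. Injectivity comes from the weak maximum principle, or from the divergence form: if $\cA(\wt d)\Phi=0$, then testing with $\Phi$ (legitimate after bootstrapping to $H^1_2$ when $p<2$ via Sobolev embedding) gives $\ve_\perp\|\nabla\Phi\|_2^2\le0$, so $\Phi=0$. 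Hence $\cA(\wt d)\in{\rm Isom}(H^2_{p,D},L_p)$ and
\[
\Phi(\wt\bc,\wt d)=\cA(\wt d)^{-1}\sum_k z_k\wt c_k.
\]
Inversion is analytic on the open set of isomorphisms, and the right side is linear and bounded in $\wt\bc$. Composing therefore gives analyticity of $(\wt\bc,\wt d)\mapsto\Phi$ into $H^2_p(\Omega)$.

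For $B_k$ we write $B_k(\wt\bc,\wt d)c=\nu\cdot{\rm tr}(\cD_k\nabla c)+z_k\,\big(\nu\cdot{\rm tr}(\cD_k\nabla\Phi)\big)\,{\rm tr}\,c$. The first term is independent of $(\wt\bc,\wt d)$ and lies in $\cL(\EE^1_{1,\mu},\FF^1_\mu)$ by the trace theorems cited for \eqref{F-mu-1}. For the second term we set $h:=\nu\cdot{\rm tr}(\cD_k\nabla\Phi)$. Then $\Phi\mapsto h$ is bounded linear from $H^2_p(\Omega)$ to $W^{1-1/p}_p(\partial\Omega)$, with $h$ time independent. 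It thus suffices to show that the bilinear map $(h,c)\mapsto h\,{\rm tr}\,c$ is bounded from $W^{1-1/p}_p(\partial\Omega)\times\EE^1_{1,\mu}(J_T)$ to $\FF^1_\mu(J_T)$; composing this bounded bilinear map with the analytic map $\Phi$ then gives the claimed analyticity of $B_k$. We reduce this to a pointwise multiplier estimate: $W^{1-1/p}_p(\partial\Omega)$ should act as a multiplier on $L_p(\partial\Omega)$ and on $W^{1-1/p}_p(\partial\Omega)$, the latter only after trading regularity against that of ${\rm tr}\,c$.

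This multiplier step is the main obstacle. Since $h$ has only $W^{1-1/p}_p$ regularity and is not automatically bounded when $p\le n+2$, a direct multiplier estimate is not available. Our plan is therefore to use the extra regularity of the trace of $c$: one has ${\rm tr}\,c\in W^{1-1/2p}_{p,\mu}(J_T;L_p(\partial\Omega))\cap L_{p,\mu}(J_T;W^{2-1/p}_p(\partial\Omega))$. Interpolating this space should place ${\rm tr}\,c$ in $W^{1/2-1/2p}_{p,\mu}(J_T;L_r)\cap L_{p,\mu}(J_T;W^{1-1/p+\delta}_p)$ with suitable $r>p$ and $\delta>0$. Hölder's inequality and paraproduct-type product estimates in Sobolev--Slobodeckij spaces (e.g. $W^{s}_p\cdot W^{t}_p\hookrightarrow W^{s}_p$ for $t>(n-1)/p$, $s\le t$) then give
\[
\|h\,{\rm tr}\,c\|_{\FF^1_\mu}\le C\|h\|_{W^{1-1/p}_p(\partial\Omega)}\|c\|_{\EE^1_{1,\mu}}.
\]
If these sharp product estimates fail in low $p$, we would fall back on the standing assumption \eqref{mu-condition} with $p>n+2$. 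Then $W^{1-1/p}_p(\partial\Omega)$ is an algebra embedding into $C(\partial\Omega)$, and the estimate follows directly.
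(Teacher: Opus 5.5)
Your proposal is correct and follows the paper's proof: ellipticity from \eqref{permittivity}, then your $\cA(\wt d)\in{\rm Isom}(H^2_{p,D}(\Omega),L_p(\Omega))$ by standard elliptic theory, then analyticity of $\Phi$ from the polynomial dependence on $\wt d$ together with analyticity of inversion, and finally trace theory plus multiplication by the time-independent trace $h$ for $B_k$. Your fallback under \eqref{mu-condition} is exactly what the paper uses implicitly when it asserts that $\FF^1_\mu(J_T)$ is a multiplication algebra, so you can simply invoke $p>n+2$ and drop the paraproduct route for small $p$, which as sketched is not yet an argument.
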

\begin{proof}
By \eqref{permittivity}, 
$$(\ve (\wt d)\xi )\cdot \xi = \ve_{\perp} |\xi|^2 + \ve_a (d\cdot \xi)^2 \ge \ve_{\perp} | \xi |^2,\quad \xi\in \R^n. $$
Hence, the differential operator $-{\rm div}\, (\ve(\wt d)\nabla \cdot)$ is strongly elliptic, and  the existence and uniqueness of a solution 
$\Phi =\Phi(\wt \bc, \wt d)\in H^2_{p,D}(\Omega)$ follows from standard elliptic theory, see for instance
\cite[Theorem 9.15 and Lemma 9.17]{GiTr01}.

\smallskip\noindent
For $\wt d\in C^1(\bar \Omega; \R^n)$ let $L(\wt d):= -{\rm div}\, (\ve(\wt d)\nabla \,\cdot \,)$.
Then, by unique solvability,
$$
L(\wt d)\in {\rm Isom}(H^2_{p,D}(\Omega), L_p(\Omega))\quad\text{and} \quad
L(\wt d)^{-1}\in \cL (L_p(\Omega), H^2_{p}(\Omega)).$$
It follows that
\begin{equation*}
\Phi(\wt \bc, \wt d) = L(\wt d)^{-1} \sum_{k=1}^N z_k \wt c_k \in H^2_{p,D}(\Omega).
\end{equation*}
As $[\wt d \mapsto \ve (\wt d)]: C^1(\bar \Omega;  \R^n) \to C^1(\bar\Omega ; \R^{n \times n})$ 
is real analytic, we  infer that
$$ [\wt d \mapsto L(\wt d)]: C^1(\bar\Omega; \R^n) \to \cL(H^2_{p,D}(\Omega), L_p(\Omega))$$
is real analytic as well. 
The first assertion now follows from the fact that inversion is real analytic. 
It follows from trace theory that
\begin{equation}
\label{Phi-boundary}
b_{k, \pa \Omega} :=
b_{k, \pa \Omega} (\wt \bc, \wt d):=
\nu\cdot {\rm tr}_{\partial\Omega} (z_k \cD_k \nabla \Phi(\wt \bc, \wt d))\in W^{1-1/p}_p(\partial\Omega).
\end{equation}
 For the last assertion, we note that
$$
[c_k \mapsto \nu \cdot {\rm tr}_{\partial\Omega} (\cD_k \nabla c_k)]\in 
 \cL(\EE^1_{1,\mu}(J_T),\, \FF^1_\mu(J_T)),
$$ 
see  \cite[Theorem 3.4, Theorem 4.5]{MeSc12} for instance, and
$$
[(b_{k,\pa\Omega}, {\rm tr}_{\pa\Omega} c_k) \mapsto b_{k,\pa\Omega} {\rm tr}_{\partial\Omega} c_k]: W^{1-1/p}_p(\partial\Omega) \times \FF^1_\mu(J_T) \to \FF^1_\mu(J_T)
$$
is well-defined, bilinear and continuous, as $\FF^1_{\mu}(J_T)$ is a multiplication algebra.  
\end{proof}

\goodbreak
\medskip\noindent
We can now establish the following maximal regularity result for the linear system \eqref{linear-MR}.
\begin{theorem}
\label{thm: linear-MR}
Suppose  that \eqref{positivity}  and \eqref{mu-condition} hold.
Let
$$(\wt \bc, \wt d)\in L_p(\Omega; \R^N) \times  C^1(\bar\Omega; \R^n)$$
be given.
Then for any $T>0$, the linear system  \eqref{linear-MR} admits a unique solution in the regularity class
\begin{equation*}
\begin{aligned}
c_k &\in H^1_{p,\mu}(J_T; L_p(\Omega))\cap L_{p,\mu}(J_T; H^2_p(\Omega)),  \\
v &\in H^1_{p,\mu}(J_T; L_p(\Omega; \R^n))\cap L_{p,\mu}(J_T; H^2_p(\Omega; \R^n)),  \\
d &\in  H^1_{p,\mu}(J_T; H^1_p(\Omega; \R^n))\cap L_{p,\mu}(J_T; H^3_p(\Omega; \R^n)),  \\
\nabla \pi  &\in L_{p,\mu}(J_T; L_p(\Omega; \R^n)), 
\end{aligned}
\end{equation*}
provided that
\begin{equation*}
\begin{aligned}
 &{\sf f}_c  = (f_1,\ldots, f_N) \in  L_{p,\mu}(J_T; L_p(\Omega; \R^N)),  \\
&{\sf g}     =(g_1,\ldots, g_N) \in W^{1/2-1/2p}_{p,\mu}(J_T; L_p(\partial \Omega ; \R^N))\cap L_{p,\mu}(J_T; W^{1-1/p}_p(\partial \Omega; \R^N)) ,  \\
      & f_v  \in L_{p,\mu}(J_T; L_p(\Omega; \R^n)), \\
      & f_d  \in L_{p,\mu}(J_T; H^1_p(\Omega; \R^n))
\end{aligned}
\end{equation*}
and the initial data satisfy the regularity and compatibility conditions 
\begin{equation}
\label{initial-compatibility}
\begin{aligned}
& c^0_k \in W^{2\mu -2/p}_p (\Omega),          && B_k(\wt z)c^0_k = g_k(0)  \text{ on }\ \partial\Omega, \\
& v^0 \in W^{2\mu -2/p}_p (\Omega; \R^n),          && {\rm div}\,v^0=0\text{ in } \ \Omega,     \quad v^0=0   \text{ on }\ {\partial\Omega},    \\
& d^0 \in W^{1+2\mu  -2/p}_p(\Omega; \R^n),   \quad   && \partial_\nu d^0 =0  \text{ on }\ {\partial\Omega}. \\
\end{aligned}
\end{equation}
\end{theorem}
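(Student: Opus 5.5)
The plan is to exploit the partially decoupled structure of \eqref{linear-MR}. Since $(\wt\bc,\wt d)$ are frozen, the equations for $c_k$ do not involve $(v,d)$, and the $(v,d)$-subsystem does not involve $\bc$. So I would prove maximal regularity separately for each $c_k$ (a scalar problem) and for the pair $(v,d)$, and then combine the two results.

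\emph{Step 1: the concentrations.} By Lemma~\ref{elliptic-tilde}, the boundary condition reads
\[
\nu\cdot{\rm tr}_{\partial\Omega}(\cD_k\nabla c_k)+b_{k,\pa\Omega}\,{\rm tr}_{\partial\Omega}c_k=g_k,
\qquad b_{k,\pa\Omega}=b_{k,\pa\Omega}(\wt\bc,\wt d)\in W^{1-1/p}_p(\partial\Omega),
\]
with $b_{k,\pa\Omega}$ independent of time. This is a Robin-type perturbation of the conormal Neumann problem for $\pa_t-{\rm div}(\cD_k\nabla\,\cdot)$. That operator is normally elliptic by \eqref{D-positive-definite} and satisfies the Lopatinskii--Shapiro condition. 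Hence the Neumann problem enjoys maximal $L_{p,\mu}$-regularity in the weighted spaces, with data $(f_k,g_k,c_k^0)$ subject to the compatibility condition \eqref{initial-compatibility}; see \cite{PrSi16} and the weighted theory in \cite{MeSc12}.

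The term $b_{k,\pa\Omega}{\rm tr}\,c_k$ is of lower order. Multiplication by $b_{k,\pa\Omega}$ maps the trace space of $\EE^1_{1,\mu}(J_T)$ (roughly $W^{1-1/2p}_{p,\mu}(L_p)\cap L_{p,\mu}(W^{2-1/p}_p)$) into $\FF^1_\mu(J_T)$ with small norm on short intervals, by interpolation. A Neumann series on a short interval then gives solvability; iterating over successive intervals, which is possible since the coefficients are time independent, extends this to arbitrary $T$. Existence for the concentrations follows, and uniqueness comes from the same isomorphism.

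\emph{Step 2: the $(v,d)$-block, which is the main obstacle.} Here the coupling is of principal order in both directions. $R_1(\wt d)\pa_t d$ costs three derivatives of $d$, and $R_0(\wt d)v$ maps $H^2_p$ into $H^1_p$. I would first eliminate $\pa_t d$ via \eqref{substitution}. This produces the operator matrix
\[
\mathcal A(\wt d)=\begin{pmatrix}\mathbb P\big(A_v(\wt d)+\gamma_1^{-1}R_1(\wt d)R_0(\wt d)\big) & -\gamma_1^{-1}\mathbb P R_1(\wt d)\Delta\\ -\gamma_1^{-1}R_0(\wt d) & \gamma_1^{-1}\Delta\end{pmatrix}
\]
on $X_0=L_{p,\sigma}(\Omega)\times H^1_p(\Omega;\R^n)$, with domain $(H^2_p\cap{}_0H^1_p\cap L_{p,\sigma})\times\{d\in H^3_p:\pa_\nu d=0\}$. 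Here $\mathbb P$ is the Helmholtz projection and the pressure is recovered afterwards.

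The claim to prove is that $-\mathcal A(\wt d)$ has maximal $L_p$-regularity. I would follow \cite{HiPr17} closely and first treat constant coefficients (freezing $\wt d(x_0)$) in the whole space and the half space. In Fourier variables, test the symbol with the weighted pairing $(\hat v,\,|\xi|^2\hat d)$. The Ericksen--Leslie dissipation structure, namely the computation behind Proposition~\ref{pro: Lyapunov-equilibria}, together with \eqref{positivity} (in particular $4\beta\gamma_1-(\gamma_2+\alpha_2+\alpha_3)^2>0$, $\alpha_4>0$, $\alpha_1\ge0$), should give a lower bound of order $|\xi|^2(|\hat v|^2+|\xi|^2|\hat d|^2)$. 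This yields parameter-ellipticity. The Lopatinskii--Shapiro condition for the no-slip and Neumann conditions would follow from the same energy argument applied to the ODE on the half line.

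Then apply the standard localization and perturbation procedure. This works because the principal coefficients depend only on $\wt d\in C^1(\bar\Omega)$, hence are continuous, and the first-order terms coming from $\nabla\wt d$ are relatively bounded with small bound. The unweighted result transfers to $L_{p,\mu}$ by \cite[Theorem~3.5.4]{PrSi16}-type arguments (see also \cite{PrSi04}). As a fallback, I would verify that the hypotheses of the Hieber--Prüss theorem in \cite{HiPr17} hold for our frozen $\wt d$ (there $|\wt d|=1$ is not needed for the linear analysis, as $P(\wt d)$ is only a smooth coefficient) and cite it directly.

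\emph{Step 3: assembling.} Given the solution $(v,d)$, recover $\pa_t d$ from the $d$-equation, and obtain $\nabla\pi=(I-\mathbb P)(\ldots)\in L_{p,\mu}(J_T;L_p)$. Combining this with Step~1 gives the solution in the stated class. Uniqueness follows because each block solution operator is an isomorphism from the data spaces, with compatibility conditions \eqref{initial-compatibility}, onto the solution spaces.
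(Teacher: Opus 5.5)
Your proposal follows the paper's proof. The paper also decouples the system and gets the $c_k$ from standard parabolic theory for the Robin-type pair $(-{\rm div}(\cD_k\nabla\cdot),B_k(\wt\bc,\wt d))$, citing \cite[Theorem 6.3.2]{PrSi16} where you use a Neumann-series perturbation. For the $(v,d)$-block it uses exactly your fallback, citing \cite[Theorem 5.1]{HiPr17} and noting that it extends to time weights and nonzero compatible initial data via \cite[Sections 6.3, 7.3]{PrSi16}.

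The hypothesis check you leave open is done in the paper by translating the Leslie coefficients into Hieber--Pr\"uss's parameters: $2\mu_P=\gamma_2-(\alpha_2+\alpha_3)$, $\mu_0=\alpha_1+\beta$, and $\mu_L=\gamma_1^{-1}\big[\beta\gamma_1-\tfrac14(\gamma_2+\alpha_2+\alpha_3)^2\big]$. With these, \eqref{positivity} gives their Assumption (P), namely $\mu_s=\alpha_4>0$, $\gamma=\gamma_1>0$ and $\mu_0,\mu_L\ge 0$.
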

\begin{proof}
We begin by focusing our attention on the subsystem
\begin{equation}
  \label{linear-vd}
	\left\{
  \begin{aligned}
    \pa_t v  -  A_v(\wt d)v + R_1(\wt d) \partial_t d +\nabla \pi&= f_v &&\text{in} &&\Omega, \\
    {\rm div}\, v &=0  &&\text{in} &&\Omega, \\
     v &= 0 &&\text{on} &&\partial \Omega, \\      
    \gamma_1\partial_t d +R_0(\wt  d)v - \Delta d&= f_d  &&\text{in} &&\Omega,\\
    \partial_\nu d &=0 &&\text{on} &&\partial \Omega, \\
  ( v(0), d(0)) & =  (v^0, d^0) &&\text{in} &&\Omega.\\
 \end{aligned}
  \right.
\end{equation}
Fortunately, we can rely here on results already established by Hieber and Pr\"uss in \cite{HiPr17}, ensuring existence and uniqueness of solutions
to  \eqref{linear-vd}. 

Setting  $\gamma =\mu_V$ and comparing the Leslie coefficients $\gamma_1, \gamma_2$ and 
 $\alpha_1,\ldots, \alpha_6$  with the coefficients 
$\mu_s, \mu_0, \mu_V, \mu_D, \mu_L, \mu_P$
in \cite[equations (1.4), (1.9)]{HiPr17},  we can infer  that
\begin{equation}
\label{comparison}
\begin{aligned}
 \gamma_1 & = \mu_V=\gamma, \quad \gamma_2 = -\mu_D , \\
 \mu_0  &= \alpha_1 + \alpha_5 +\alpha_6, \\
\alpha_4 &= \mu_s, \\
\alpha_2 &= - \left(\frac{\mu_D + \mu_V +2 \mu_P} {2\gamma}\right)\mu_V,  \\
\alpha_3 &= - \left(\frac{\mu_D - \mu_V +2 \mu_P} {2\gamma}\right)\mu_V,  \\
\alpha_5 & = \left(\frac{\mu_D + \mu_V +2 \mu_P} {2\gamma}\right)\mu_D + \frac{\mu^2_P}{2\gamma} + \frac{\mu_L}{2}, \\
\alpha_6 & =  \left(\frac{\mu_D - \mu_V +2 \mu_P} {2\gamma}\right)\mu_D + \frac{\mu^2_P}{2\gamma} + \frac{\mu_L}{2}. \\
\end{aligned}
\end{equation}
A short computation then yields
\begin{equation*}
\begin{aligned}
& \alpha_2 +\alpha_3 = -(\mu_D  +2\mu_P) =  \gamma_2 - 2\mu_P,  \\
& \alpha_5 + \alpha_6  
=\left(\frac{\mu_D + 2\mu_P }  {\mu_V } \right) \mu_D +\frac{\mu_P^2}{\mu_V} +\mu_L
= (\alpha _2 + \alpha_3) \frac{\gamma_2 } {\gamma_1 }  +\frac{\mu_P^2}{\gamma_1} +\mu_L. \\
\end{aligned}
\end{equation*}
Therefore, the coefficients $\mu_P$ and $\mu_L$ in \cite{HiPr17} can be expressed by the Leslie coefficients as 
\begin{equation}
\label{muP-muL}
\begin{aligned}
& 2\mu_P = \gamma_2 - (\alpha_2 +\alpha_3),  \\ 
& \mu_L = \frac{1}{\gamma_1}\big[ (\alpha_5 +\alpha_6)\gamma_1 - (\alpha_2 +\alpha_3)\gamma_2 -\mu_P^2 \big]
= \frac{1}{\gamma_1}\big[(\alpha_5 +\alpha_6)\gamma_1  -\frac{1}{4}(\gamma_2 +(\alpha_2 +\alpha_3))^2  \big]\\
& \quad = \frac{1}{\gamma_1}\big[\beta\gamma_1  -\frac{1}{4}(\gamma_2 +(\alpha_2 +\alpha_3))^2  \big].
\end{aligned} 
\end{equation}
Hence, \eqref{comparison}-\eqref{muP-muL} in conjunction with \eqref{beta}-\eqref{positivity} imply
 that the conditions $\mu_s>0, \gamma>0$ and $\mu_0,\mu_L\ge 0$ of  Assumption (P) in \cite{HiPr17} are satisfied.
 
 \medskip\noindent
 It follows from \cite[Theorem 5.1]{HiPr17} 
 (the temperature variable $\theta$ is omitted here, as we consider the isothermal case) that  system \eqref{linear-vd} 
admits  a unique solution 
 $(v,d,\pi)$ with the regularity properties stated in Theorem~\ref{thm: linear-MR} above.

In fact,  \cite[Theorem 5.1]{HiPr17} only establishes maximal regularity in function spaces without time weights and for zero initial conditions. 
However, an inspection
of the proof shows that  \cite[Theorem 5.1]{HiPr17} extends to our situation,  as the underlying solvability results are based on 
Sections 6.3 and  7.3  in \cite{PrSi16} which allow for time weights and non-trivial initial conditions that satisfy the regularity and compatibility conditions of
our theorem. 
We also refer to \cite[Theorem 5.1]{HiPr19} and \cite[Theorem 6.2]{HLW25}, where a related situation is considered.

\medskip\noindent
Finally, we observe that the remaining  equations
 \begin{equation}
  \label{linear-ck}
	\left\{
  \begin{aligned}
    \pa_t c_k  -  {\rm div}(\cD_k \nabla c_k)&= f_k &&\text{in} &&\Omega, \\ 
    B_k(\wt \bc, \wt d) c_k  &=g_k  &&\text{on} &&\partial \Omega, \\
     c_k(0) & =  c^0_k&&\text{in} && \Omega,\\
 \end{aligned}
  \right.
\end{equation}
in \eqref{linear-MR} are completely decoupled from the equations for $(v,d,\pi)$.
Condition \eqref{D-positive-definite} implies that 
$$(-{\rm div}(\cD_k \nabla \,\cdot\,), B_k(\wt\bc, \wt d)),\quad k=1,\ldots, N,$$ 
satisfies the assumptions of \cite[Theorem 6.3.2]{PrSi16}.
We can then infer that the linear parabolic problem \eqref{linear-ck} 
has a unique solution in the maximal regularity class asserted.

This completes the proof of Theorem~\ref{thm: linear-MR}.
\end{proof}

\begin{remark}
\label{rem: Leslie}
Assumption (P) in \cite{HiPr17} in conjunction with~\eqref{muP-muL} implies that Theorem~\ref{thm: linear-MR}
remains valid under the condition
\begin{equation*}
\gamma_1>0, \quad \alpha_4>0, \quad \alpha_1 + \beta \ge 0, \quad 
4\beta\gamma_1  - (\gamma_2 +\alpha_2 +\alpha_3)^2 \ge 0.
\end{equation*}
We imposed the slightly stronger condition~\eqref{positivity} in order to establish, in addition, that $-E$ is a strict Lyapunov functional
for \eqref{main-PDE}, see~Proposition~\ref{pro: Lyapunov-equilibria}. 

\smallskip\noindent
We also note that the Parodi relation $\gamma_2= \alpha_2+\alpha_3$ is satisfied exactly when $\mu_P =0$.
\end{remark}
Setting $\wt z= (\wt \bc, \wt v,\wt d)$ and $z=(\bc, v,d)$, 
we can rewrite the linear system \eqref{linear-MR} in a more condensed and abstract (equivalent) form.

Indeed, substituting the expression for $\partial_t d$, as given in the equation for the director $d$,
into $R_1(\wt d) \partial_t d$ and applying the Helmholtz projection, the linear system \eqref{linear-MR} can be cast  as
\begin{equation}
\label{linear-abstract-tilde}
\left\{
\begin{aligned}
 \partial_t z + \sA(\wt z) z &= \mathsf f(t) &&\text{in} \quad\Omega, \\
\sB(\wt z) z & = \mathsf g(t) &&\text{on}  \quad \partial\Omega, \\
       z(0)& = z^0 &&\text{in} \quad \Omega,
\end{aligned}
\right.
\end{equation}
where
\begin{equation}
\label{A(z)}
\begin{aligned}
\sA(\wt z): & =
\begin{bmatrix}
-A_c &0 &0  
\\[5pt] 
0 & -\PP_H \left(A_v(\wt d) +\frac{1}{\gamma_1} R_1(\wt d) R_0(\wt d) \right) & \frac{1}{\gamma_1} \PP_H R_1 (\wt d) \Delta  \\[5pt] 
0&  \frac{1}{\gamma_1} R_0(\wt d)                                                           & - \frac{1}{\gamma_1}  \Delta
\end{bmatrix},
\\[5pt]
\quad \sB(\wt z) z : &= B_c(\wt \bc, \wt d) \bc .
\end{aligned}
\end{equation}

\medskip\noindent
Here, $\sA_c$ and $B_c(\wt z)$ are given by
\begin{equation}
\label{Ac}
\begin{aligned}
&A_c: = -{\rm diag}\, 
[{\rm div}(\cD_1 \nabla \cdot\;), \ldots, {\rm div}( \cD_N \nabla \cdot\;) ], \\[5pt]
&B_c(\wt z): = {\rm diag}\,[B_1(\wt z),\ldots, B_N(\wt z)], \quad B_k(\wt z) u 
            = \nu \cdot \cD_k\nabla u +  (\nu \cdot  z_k\cD_k \nabla \Phi(\wt \bc, \wt d) )u. \\[5pt]
\end{aligned}
\end{equation}
We note once again that
$R_1(\wt d)R_0(\wt d)$, with $\wt d\in C^1(\Omega)$, acts as a second-order differential operator on $v$, being the composition of two first-order operators,
while $R_1(\wt d) \Delta $ acts as a third-order differential operator on $d$.

\medskip\noindent
We will also introduce a condensed notation for the function spaces involved in our analysis.
For this, let
\begin{equation}
\label{X0}
  X_0:=L_{p}(\Omega;\R^N)\times L_{p, \sigma}(\Omega;\R^n)\times H^1_p(\Omega;\R^n),\quad 1<p<\infty.
\end{equation}
Here,
$
L_{p,\sigma}(\Omega;\R^n) :=\PP_H (L_p(\Omega;\R^n) )
$
is the space of all solenoidal vector fields in $L_p(\Omega; \R^n)$,
with 
$$\PP_H: L_p(\Omega;\R^n) \to L_{p,\sigma}(\Omega;\R^n)$$ 
the Helmholtz projection.
For all $s\geq 0$, we define
\begin{align*}
W^s_{p,\sigma}(\Omega;\R^n) :& = W^s_p(\Omega;\R^n)\cap L_{p,\sigma}(\Omega;\R^n).
\end{align*}
Moreover, we set
\begin{equation}
\label{X1}
X_1=H^1_p(\Omega; \R^N)\times \{v\in H^1_{p,\sigma}(\Omega; \R^n): v=0 \text{ on } \partial\Omega\}
\times \{d\in H^1_{p}(\Omega; \R^n): \partial_\nu d=0 \text{ on } \partial\Omega\}
\end{equation}
 and we denote the space of initial data as
\begin{equation}
\label{X-gamma-mu}
  X_{\gamma,\mu}:= (X_0,X_1)_{\mu-1/p,p}
\end{equation}        
for $\mu\in (1/p,1]$.
For $\mu$ satisfying \eqref{mu-condition}, we infer from \cite[Theorem 3.4]{Ama00}  and \cite[Theorem~4.3.3]{Tri78} that   
\begin{equation}
\label{interpolation}
(\bc, v, d)\in X_{\gamma,\mu} \ \Leftrightarrow \
\left\{
\begin{aligned}
& \bc\in W^{2\mu-2/p}_{p}(\Omega;\R^N), && \\
& v\in W^{2\mu-2/p}_{p, \sigma}(\Omega; \R^n),  && {\rm div}\,v^0=0\text{ in } \ \Omega,     \quad v^0=0   \text{ on }\ {\partial\Omega},    \\
& d\in  W^{2\mu-2/p+1}_{p}(\Omega;\R^n) ,    && \partial_\nu d^0 =0  \text{ on }\ {\partial\Omega}. \\\end{aligned}
\right.
\end{equation}
\medskip
Additionally,  given any $T\in (0,\infty]$, we define
\begin{equation}
\label{EE_1}
\begin{aligned}
 & \EE_{1,\mu}(J_T):= H^1_{p,\mu}(J_T; X_0)   \cap L_p(J_T; X_1),\\
& \EE_{0,\mu}(J_T):= L_{p,\mu}(J_T; X_0), \\
&
\FF_\mu(J_T)       :=    W^{1/2-1/2p}_{p,\mu}(J_T;L_p(\partial\Omega;\R^N))\cap L_{p,\mu}(J_T; W^{1-1/p}_p (\partial\Omega;\R^N)),
\end{aligned}
\end{equation}
where $X_0$ and $X_1$ are defined in~\eqref{X0} and \eqref{X1}, respectively.
Finally, for future analysis, we also introduce the spaces with vanishing  trace   at $t=0$:
\begin{equation*}
\begin{aligned}
	&{_0}\EE_{1,\mu}(J_T):=\{z\in \EE_{1,\mu}(J_T):  z(0)=0\}, \\
	 &{_0}\FF_\mu(J_T)  :=\{ g\in \FF_\mu(J_T):  g(0)  = 0 \},
\end{aligned}
\end{equation*}

\medskip\noindent
We can now establish a maximal regularity result for  \eqref{linear-abstract-tilde} .

\begin{proposition}
\label{pro: abstract-linear-tilde}
Suppose  that \eqref{positivity}  and \eqref{mu-condition} hold.
Let
$$(\wt \bc, \wt d)\in L_p(\Omega; \R^N) \times C^1(\bar\Omega; \R^n)$$
be given.
Then for any $T>0$, the linear system  \eqref{linear-abstract-tilde} admits a unique solution $z\in \EE_{1,\mu}(J_T)$,
provided 
\begin{equation*}
(\mathsf f, \mathsf g, z^0)\in \DD_\mu((\wt \bc, \wt d), T):=\{\EE_{0,\mu}(J_T) \times \FF_\mu(J_T)\times X_{\gamma,\mu} : \sB(\wt z) z^0= {\sf g}(0)\}.
\end{equation*}
There exists a constant $M=M(T)>0$ such that
$$ \| z\|_{\EE_{1,\mu}(J_T)}\le M \| (f,g,z_0)\|_{\EE_{1,\mu}(J_T)\times \FF_\mu( J_T) \times X_{\gamma,\mu}},
\quad (\mathsf f, \mathsf g, z^0)\in \DD_\mu((\wt \bc, \wt d), T).
$$
Moreover, for each fixed $T_*>0$  and each compact subset $K$ of $L_p(\Omega;\R^N)\times C^1(\bar\Omega)$ there exists a constant
$M=M(T_*, K)>0$ such that 
 \begin{equation}
 \label{estimate-uniform}
 \| z\|_{{_0}\EE_{1,\mu}(J_T)}\le M \| (f,g)\|_{\EE_{1,\mu}(J_T)\times \FF_\mu( J_T)}, \quad (\wt \bc, \wt d)\in K,\quad T\in (0,T_*],
\end{equation}
provided $z^0=0$ and $\sf g(0)=0$.
\end{proposition}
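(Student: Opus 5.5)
The plan is to reduce the first two assertions to Theorem~\ref{thm: linear-MR} by showing that \eqref{linear-abstract-tilde} is equivalent to \eqref{linear-MR}, and then to obtain the uniform estimate \eqref{estimate-uniform} from a compactness argument that exploits continuous dependence on $(\wt\bc,\wt d)$.

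\textbf{Step 1: equivalence and existence.} Given $z=(\bc,v,d)\in\EE_{1,\mu}(J_T)$ solving \eqref{linear-abstract-tilde} with data $(\mathsf f,\mathsf g,z^0)$, I write $\mathsf f=(\mathsf f_c,f_v,f_d)$ with $f_v\in L_{p,\mu}(J_T;L_{p,\sigma})$.
\begin{itemize}
\item Set $f_d^{\rm MR}:=\gamma_1 f_d$. Then the third row is exactly the $d$-equation in \eqref{linear-MR}.
\item Substituting $\gamma_1\partial_t d=\Delta d-R_0(\wt d)v+\gamma_1 f_d$ shows that the second row is the Helmholtz projection of the $v$-equation with right-hand side $f_v+R_1(\wt d)f_d$.
\item Since $\wt d\in C^1$ and $f_d\in L_{p,\mu}(H^1_p)$, we have $R_1(\wt d)f_d\in L_{p,\mu}(L_p)$, and the pressure gradient is recovered as $(I-\PP_H)$ of the remaining terms.
\end{itemize}
Conversely, a solution of \eqref{linear-MR} yields a solution of \eqref{linear-abstract-tilde}. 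The data spaces also match:
\begin{itemize}
\item $\EE_{0,\mu}$ corresponds to the $f$-classes of Theorem~\ref{thm: linear-MR};
\item $\FF_\mu$ is the boundary class;
\item by \eqref{interpolation}, $X_{\gamma,\mu}$ encodes the initial regularity and compatibility conditions \eqref{initial-compatibility}, with the $c$-compatibility being the constraint $\sB(\wt z)z^0=\mathsf g(0)$ in $\DD_\mu$.
\end{itemize}
Existence and uniqueness then follow from Theorem~\ref{thm: linear-MR}.

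\textbf{Step 2: the estimate for fixed $T$.} The solution operator
\[
\DD_\mu\to\EE_{1,\mu}(J_T),\quad (\mathsf f,\mathsf g,z^0)\mapsto z,
\]
is the inverse of the bounded bijection $z\mapsto(\partial_t z+\sA(\wt z)z,\ \sB(\wt z)z,\ z(0))$. This map is bounded by Lemma~\ref{elliptic-tilde} and the mapping properties of $A_v,R_0,R_1$ for $C^1$ coefficients. The range $\DD_\mu$ is a closed subspace, by continuity of the trace at $t=0$ and of $\sB(\wt z)$ on $X_{\gamma,\mu}$. The open mapping theorem therefore gives the constant $M(T)$.

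\textbf{Step 3: uniformity in $T$ and in $(\wt\bc,\wt d)$.} For $T$, I use zero initial data. Then extension by zero (respectively, a bounded extension operator for ${_0}\EE_{1,\mu}$ and ${_0}\FF_\mu$ from $J_T$ to $J_{T_*}$ with constants independent of $T$) together with uniqueness/causality reduces the estimate to $T=T_*$: the solution on $J_T$ is the restriction of the solution on $J_{T_*}$ for the extended data. In the weighted setting the zero-trace spaces are needed to keep extension norms uniform, and this holds since $z^0=0$ and $\mathsf g(0)=0$.

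For the parameters, write $L(\wt z)$ for the operator above restricted to ${_0}\EE_{1,\mu}(J_{T_*})\to\EE_{0,\mu}\times{_0}\FF_\mu$. The map $(\wt\bc,\wt d)\mapsto L(\wt z)$ is continuous into $\cL$:
\begin{itemize}
\item the coefficients $\ve(\wt d)$, $P(\wt d)$ and the $\wt d$-dependence of $A_v,R_0,R_1$ are polynomial, with $C^1$-continuous dependence;
\item $B_c$ is analytic in $(\wt\bc,\wt d)$ by Lemma~\ref{elliptic-tilde}.
\end{itemize}
Since each $L(\wt z)$ is invertible by Step~1, and inversion is continuous on the open set of isomorphisms, $\wt z\mapsto\|L(\wt z)^{-1}\|$ is continuous. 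It is therefore bounded on the compact set $K$.

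I expect the main obstacle to be Step~3. The difficulty is ensuring that the $T$-uniformity survives the weights and the nonlocal boundary operator, in particular that the zero-trace extension operators for $\FF_\mu$ have $T$-independent bounds.
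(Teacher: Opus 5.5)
Your proposal is correct and follows the paper's proof essentially step for step. You reduce to Theorem~\ref{thm: linear-MR} via $f_d=\gamma_1\mathsf f_3$, $f_v=\mathsf f_2+R_1(\wt d)\mathsf f_3$ (the paper takes $\PP_H R_1(\wt d)\mathsf f_3$, which is equivalent after projection) and the Helmholtz projection. You then apply the open mapping theorem on the closed data space $\DD_\mu((\wt\bc,\wt d),T)$. Finally, you combine continuity of $(\wt\bc,\wt d)\mapsto(\partial_t+\sA(\wt d),\sB(\wt\bc,\wt d))^{-1}$ on the zero-trace spaces with compactness of $K$ and an extension--restriction argument for uniformity in $T$.

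The only differences are minor. You reduce to $T=T_*$ before invoking compactness, whereas the paper does it the other way round. You also make explicit the converse direction, recovering $\nabla\pi$ via $I-\PP_H$, which the paper uses only implicitly for uniqueness.
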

\goodbreak
\goodbreak
\begin{proof}
The first part of the assertion follows  from Theorem~\ref{thm: linear-MR}. 
To this end,  we first observe that the compatibility conditions \eqref{initial-compatibility} are satisfied by \eqref{interpolation}
and  $\cB(\wt z)\bc^0=\mathsf g(0)$. By assumption, we have
$$
\mathsf f =(\mathsf f_1, \mathsf f_2, \mathsf f_3)
\in L_{p,\mu}(J_T; L_p(\Omega;\R^N))\times L_{p,\mu}(J_T; L_{p,\sigma}(\Omega; \R^n))\times L_{p,\mu}(J_T; H^1_p(\Omega; \R^n)).
$$
For 
$$\mathsf f_c := \mathsf f_1,\quad f_d := \gamma_1 \mathsf f_3,\quad   f_v:= \mathsf f_2  + \PP_H  R_1(\wt d) \mathsf f_3,$$
let $(\bc, v, d, \pi)$ be the unique solution of \eqref{linear-MR} asserted in Theorem~\ref{thm: linear-MR}. 
Applying the Helmholtz projection $\PP_H$ to the equation for $v$ in \eqref{linear-MR} and using 
that  $\PP_H \nabla \pi =0$, $\PP_H \mathsf f_2 = \mathsf f_2$ and $\PP_H v=v$ (as ${\rm div}\,v=0$),  one immediately verifies  that $v$ satisfies
\begin{equation}
\label{PH-v}
\partial_t v -\PP_H A_v(\wt d) v + \PP_H R_1(\wt d) \partial_t d = f_v = \mathsf f_2  + \PP_H  R_1(\wt d) \mathsf f_3.
\end{equation}
Substituting the expression
$\partial_t d = \frac{1}{\gamma_1} (f_d  -R_0(\tilde d) v +\Delta d)$
into \eqref{PH-v} shows that $(v,d)$ is a solution of
\begin{equation*}
\begin{aligned}
\partial_t v -\PP_H \big(A_v(\wt d) v  + (1/{\gamma_1}) R_1(\wt d) R_0(\tilde d)\big) v + (1/\gamma_1)\PP_H R_1(\wt d)\Delta d  &=  \mathsf f_2, \\
 \partial_t d +(1/\gamma_1) R_0(\wt  d)v - (1/\gamma_1) \Delta d&= \mathsf f_3.
\end{aligned}
\end{equation*}
In conclusion, $z=(\bc, v, d)$ belongs to $ \EE_{1,\mu}(J_T)$ and is the unique solution of  \eqref{linear-abstract-tilde}.
We have shown that the mapping
\begin{equation*}
\label{uniform-estimate}
\LL(\wt z) := (\partial_t + \cA(\wt z), \sB(\wt z), \gamma_0): \EE_{1,\mu}(J_T) \to  \DD_\mu((\wt \bc, \wt d), T).
\end{equation*}
is linear, bounded, and invertible. The open mapping theorem then implies existence of a constant $M=M(T)$ such that the first estimate
of the proposition holds. (By the mapping properties of ${\sB(\wt z)}$, one readily verifies that $ \DD_\mu((\wt \bc, \wt d), T)$ is a Banach space).

\medskip\noindent
It remains to show~\eqref{estimate-uniform}.
For this, we first observe that  the mapping
$$[(\wt \bc, \wt d) \mapsto (\sA(\wt d), \sB(\wt \bc, \wt d)) ] : L_p(\Omega; \R^N) \times C^1(\bar\Omega; \R^n) \to \cL(\EE_{1,\mu}(J_T), \EE_{0,\mu}\times\FF_\mu(J_T))$$
is continuous (in fact, real analytic).
It is not difficult to verify the assertion for $\sA(\wt d)$, while  the  assertion for $\cB(\wt \bc, \wt d)$ follows from  Lemma~\ref{elliptic-tilde}. 

Letting $\LL_0(\wt \bc, \wt d):= (\partial_t +\sA(\wt d), \sB(\wt \bc, \wt d))$,
we can conclude from the solvability results for   \eqref{linear-abstract-tilde} already established that
$$
[(\wt \bc, \wt d)\mapsto \LL_0(\wt \bc, \wt d)]: L_p(\Omega;\R^N)\times C^1(\bar\Omega; \R^n) \to {\rm Isom}({_0}\EE_{1,\mu}(J_T), \EE_{0,\mu}(J_T) \times {_0}\FF_\mu(J_T))
$$
is defined and continuous, as we are taking $z^0=0$ and $\mathsf g(0)=0$.  \\
 Here, $ {\rm Isom}({_0}\EE_{1,\mu}(J_T), \EE_{0,\mu}(J_T) \times {_0}\FF_\mu(J_T))$ has the topology of 
$\cL(\EE_{1,\mu}(J_T), \EE_{0,\mu}(J_T) \times \FF_\mu(J_T)).$
As inversion is continuous, we obtain that the mapping
$$
[(\wt \bc, \wt d)\to \LL_0(\wt \bc, \wt d)^{-1}]: L_p(\Omega; \R^N)\times C^1(\bar\Omega; \R^n) \to \cL( \EE_{0,\mu}(J_T) \times {_0}\FF_\mu(J_T), {_0}\EE_{1,\mu}(J_T)))
$$
is continuous as well. This implies \eqref{estimate-uniform} for each fixed $T\in (0, T_*)$. Independence of $T\in (0,T_*)$ can then be obtained 
by an extension-restriction argument as in~\cite[Proposition 4.1(b)]{DSS24}. 
\end{proof}
\noindent
The nonlinear system \eqref{main-PDE-3} will also be written in the more condensed (and equivalent) form
\begin{equation}
\label{nonlinear abstract}
\left\{
\begin{aligned}
 \partial_t z + \sA(z) z &= \sF(z) &&\text{in} \quad\Omega \\
\sB(z) z & = 0 &&\text{on}  \ \  \partial\Omega \\
       z(0)& = z^0 &&\text{in} \quad \Omega,
\end{aligned}
\right.
\end{equation}
where the operators $(\sf A(z), \sf B(z))$ are defined in \eqref{A(z)}-\eqref{Ac} and 
\begin{equation}
\label{F(z)}
\begin{aligned}
\sF(z): =
\begin{bmatrix}
 F_{\bc }(z)  \\[5pt] 
  \PP_H \big(F_v(z) -\frac{1}{\gamma_1} R_1(d)F_d(z) \big) \\[5pt] 
 \frac{1}{\gamma_1} F_d(z)                                                         
\end{bmatrix},
\end{aligned}
\end{equation}
with $F_{\bc}(z)=(F_1(z),\ldots, F_N(z))$, 
where $(F_k(z), F_v(z), F_d(z))$ are introduced in \eqref{Fc-Fv-Fd}.
It should be noted that for $F_v(z)$ we also replace $\partial_t d$ by 
$$\gamma_1\partial_t d= \Delta d + |\nabla d|^2 d + \varepsilon_a P(d) (\nabla \Phi \otimes \nabla\Phi) d  - \gamma_1 P(d)D(v)d +v\nabla d.$$ 
Hence  $F_v(z)$  contains spatial derivatives of $d$  up to second order and spatial derivatives of $v$ up to first order.

\smallskip\noindent
For simplicity, we will  introduce the notation
\begin{equation}
\label{AB}
\cA(z):= \sA(z)z  \quad \text{and} \quad \cB(z):=\sB(z)z.
\end{equation}
Then it follows from Proposition~\ref{app: pro: ABF-smooth} that
\begin{equation}
\label{ABF-smooth}
\cA,\sF\in C^\omega(\EE_{1,\mu}(J_T), \EE_{0,\mu}(J_T)),\quad \cB\in C^\omega(\EE_{1,\mu}(J_T), \FF_{\mu}(J_T)),
\end{equation}
with
\begin{equation}
\label{ABF-smooth-derivatives}
\cA'(z_*)z=\sA(z_*)z+[\sA'(z_*)z]z_*,\quad \cB'(z_*)z=\sB(z_*)z+[\sB'(z_*)z]z_*,
\end{equation}
where $z,z_*\in \EE_{1,\mu}(J_T)$.

\medskip\noindent
For the proof of local well-posedness of \eqref{nonlinear abstract},  we consider the following linearized non-autonomous problem
\begin{equation}
   \label{linearized}
   \left\{
    \begin{aligned}
      \pa_t z   + \cA'(z_*(t))z - \sF^\prime(z_*(t))z&={\sf f}(t)&&\text{in}&&\Omega, \\
                    \cB'(z_*(t))z  &={\sf g}(t)&&\text{on}&&\pa\Omega, \\
      z(0)&=z^0 &&\text{in}&&\Omega.
    \end{aligned}
    \right.
  \end{equation}
We show that the linear problem~\eqref{linearized} enjoys maximal regularity.
\begin{proposition}
\label{pro: linearized}
Suppose  that \eqref{positivity}  and \eqref{mu-condition} hold. Then the linearized  problem \eqref{linearized}
has for each $z_*\in \EE_{1,\mu}(J_T)$  a unique solution  $z\in \EE_{1,\mu}(J_T)$,
provided
\begin{equation*}
\begin{split}
({\sf f},{\sf g}, z^0) \in \widetilde{\DD}_\mu(z_*,T):&=\{\EE_{0,\mu}(J_T)\times \FF_\mu(J_T)\times X_{\gamma,\mu} : 
  \cB^\prime (z_*(0))z^0  ={\sf g}(0) \}.
\end{split}
\end{equation*}
In this case, there is a constant $c_2=c_2(T)>0$ such that
    \begin{equation*}
    \| z \|_{\EE_{1,\mu}(J_T)}\le c_2(T) \big( \| {\sf f} \|_{\EE_{0,\mu}(J_T)} + \| {\sf g} \|_{\FF_\mu(J_T)} + \|z^0\|_{X_{\gamma,\mu}}\big).
    \end{equation*}
   Given any $T_*>0$, the constant $c_2$ is independent of $T\in (0,T_*]$ in case ${\sf g}\in {_0}\FF_\mu(J_T)$ and $z^0=0$.\\
\end{proposition}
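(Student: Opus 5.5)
The plan is to treat \eqref{linearized} as a lower-order perturbation of the frozen-coefficient problem \eqref{linear-abstract-tilde} handled in Proposition~\ref{pro: abstract-linear-tilde}. We combine this with a short-time contraction argument and then concatenate in time.

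By \eqref{ABF-smooth-derivatives}, the linearized operators decompose as
\[
\cA'(z_*(t))z - \sF'(z_*(t))z = \sA(z_*(t))z + \big([\sA'(z_*(t))z]z_* - \sF'(z_*(t))z\big),
\]
\[
\cB'(z_*(t))z = \sB(z_*(t))z + [\sB'(z_*(t))z]z_*.
\]
The bracketed terms are of lower order in $z$. For $\sA$, the derivative with respect to $d$ only hits the coefficients, so $[\sA'(z_*)z]z_*$ involves $z$ only through $d$ and $\nabla d$, not $\nabla^2 v$ or $\nabla^3 d$. For $\sB$, the term $[\sB'(z_*)z]z_* = \big(\nu\cdot z_k\cD_k\nabla \Phi'(\bc_*,d_*)[\bc,d]\big)c_{*,k}$ depends on $(\bc,d)$ only through the elliptic solution operator of Lemma~\ref{elliptic-tilde}. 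That operator gains two derivatives on $\bc$ and, for $d$, costs only $C^1$ control. The term $\sF'(z_*)z$ contains at most first derivatives of $v$ and $\bc$ and second derivatives of $d$, which is one order below the principal part in each component.

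\textbf{Step 1 (freezing the coefficients in time).} Since $z_*\in \EE_{1,\mu}(J_T)\hookrightarrow BU\!C(\bar J_T; X_{\gamma,\mu})$ and, by \eqref{mu-condition}, $X_{\gamma,\mu}\hookrightarrow L_p\times C^1\times C^2$ componentwise, the map $t\mapsto(\bc_*(t),d_*(t))$ is continuous into $L_p(\Omega;\R^N)\times C^1(\bar\Omega;\R^n)$. Its range $K$ is therefore compact. On a short interval $[0,\delta]$ we compare with the frozen operator $(\sA(z_*(0)),\sB(z_*(0)))$. The difference $\sA(z_*(t))-\sA(z_*(0))$ is small in $\cL(X_1,X_0)$ uniformly on $[0,\delta]$. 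For $\sB$ we use Lemma~\ref{elliptic-tilde} together with the analytic dependence, and we need $\FF_\mu$-type estimates of $t\mapsto \Phi(\bc_*(t),d_*(t))$. Here we invoke the time-regularity result Proposition~\ref{pro: Phi-time} of Appendix~\ref{Appendix A} to show that the multiplier $b_{k,\pa\Omega}(t)$ has enough time regularity (in $W^{1/2-1/2p}$) for $u\mapsto b_{k,\pa\Omega}u$ to act on $\FF_\mu(J_\delta)$ with small norm of $b(t)-b(0)$.

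\textbf{Step 2 (fixed point on the short interval).} First reduce to zero data at $t=0$ by subtracting a solution $w$ of the frozen problem with data $(0,\tilde{\sf g},z^0)$. Here $\tilde{\sf g}$ is an extension chosen so that the compatibility condition matches $\cB'(z_*(0))z^0={\sf g}(0)$, which is possible by Proposition~\ref{pro: abstract-linear-tilde}. Then write the remainder problem as $\LL_0(z_*(0))u = (\text{data}) + \mathcal P u$, where $\mathcal P$ collects the frozen-coefficient differences and the lower-order terms. The uniform bound \eqref{estimate-uniform} on ${_0}\EE_{1,\mu}(J_\delta)$, with $M$ independent of $\delta\le T_*$, makes the scheme work provided $\|\mathcal P\|_{\cL({_0}\EE_{1,\mu},\EE_{0,\mu}\times{_0}\FF_\mu)}\to 0$ as $\delta\to 0$. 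For the lower-order terms this follows from interpolation and the embedding \eqref{Ws-BUC}: functions in ${_0}\EE_{1,\mu}(J_\delta)$ vanishing at $0$ have intermediate norms bounded by $\delta^{\eta}$ times the $\EE_1$ norm, with constants uniform in $\delta$ thanks to the vanishing trace. A Neumann series then gives a unique solution on $J_\delta$.

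\textbf{Step 3 (globalizing to $J_T$).} Since $K$ is compact and $z_*$ is uniformly continuous, $\delta$ can be chosen uniformly. We restart at $t=\delta,2\delta,\dots$, using the value $z(j\delta)\in X_{\gamma,\mu}$ as new initial datum. Only finitely many steps are needed. On later intervals the weight is harmless because $t\ge\delta$ there, so the weighted and unweighted spaces coincide. Uniqueness and the estimate with $c_2(T)$ follow, and in the case ${\sf g}\in{_0}\FF_\mu$, $z^0=0$ the constant is uniform in $T\le T_*$ by the usual extension–restriction argument as in Proposition~\ref{pro: abstract-linear-tilde}.

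The main obstacle is the boundary part. I expect the hardest point to be proving that $z\mapsto[\sB'(z_*)z]z_*$ and $(\sB(z_*(t))-\sB(z_*(0)))z$ are small maps ${_0}\EE_{1,\mu}(J_\delta)\to{_0}\FF_\mu(J_\delta)$. This requires controlling the fractional time regularity $W^{1/2-1/2p}_{p,\mu}$ of traces of $\nabla\Phi$ depending nonlocally on $(\bc,d)$. I would handle it through Proposition~\ref{pro: Phi-time} and the multiplication-algebra property of $\FF_\mu$.
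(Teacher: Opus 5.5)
Your proposal is correct and follows essentially the paper's route. The paper also freezes the coefficients at $z_*(t_j)$ and shows the remainders $\sR^1_j,\sR^2_j$ are small on short intervals for functions with vanishing trace; this is the content of Proposition~\ref{smallness}, which uses Proposition~\ref{pro: Phi-time} for the boundary multiplier. It then closes a contraction with the uniform bound \eqref{estimate-uniform} over the compact range of $z_*$ and concatenates. One difference is cosmetic: you first subtract an extension of $z^0$ and use a Neumann series in ${_0}\EE_{1,\mu}$, whereas the paper contracts on the affine set $\{w: w(0)=z^0\}$ and uses $\cB'(z_*(0))z^0={\sf g}(0)$ directly. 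One imprecision: when you restart in unweighted spaces you need $z(j\delta)\in X_{\gamma,1}$, not merely $X_{\gamma,\mu}$, which holds because $z$ lies in the unweighted class away from $t=0$.
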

\begin{proof}
We partition the interval $\bar J_T=[0,T]$ into finitely many
equidistant  subintervals $ I_j = [t_j, t_{j+2}]$ such that
$$
t_0=0<t_1<\cdots<t_k=T,\quad 
$$
 Note that  $I_j\cap I_{j+1} = [t_{j+1}, t_{j+2}]$. 
We then show that  \eqref{linearized} can be successively solved on each subinterval $I_j$, provided 
that $ | I_j | :=(t_{j+2}-t_{j})$ is sufficiently small. 
Hence, we solve  \eqref{linearized} on short subintervals, 
and then piece together these local solutions to obtain a solution on the whole time interval.
The main ingredient will be Proposition~\ref{pro: abstract-linear-tilde} and a contraction mapping argument.

\medskip\noindent
We recast \eqref{linearized} as 
\begin{equation}
\label{linearized-j}
   \left\{
    \begin{aligned}
      \pa_t z   + \sA(z_*(t_j))z &={\sf f}(t) - \sR^1_j(t)z &&\text{in}&&\Omega, && t\in I_j , \\
                    \sB(z_*(t_j))z   &={\sf g}(t) -  \sR^2_j(t)z &&\text{on}&&\pa\Omega, &&t\in I_j , \\
      z(t_j)&=z^0_j &&\text{in}&&\Omega,
    \end{aligned}
    \right.
  \end{equation}
for $j=0,\ldots, k$, where
\begin{equation}
\label{R1 and R2}
\begin{aligned}
& \sR^1_j(t)z =(\sA(z_*(t))  -\sA(z_*(t_j)))z+ [\sA^\prime (z_*(t)) z]z_*(t) - \sF^\prime(z_*(t))z, \\
& \sR^2_j(t) z= (\sB(z_*(t))  -\sB(z_*(t_j)))z+ [\sB^\prime (z_*(t)) z]z_*(t).
\end{aligned}
\end{equation}
It follows from Proposition~\ref{smallness} that for any given $\eta>0$, there exist $(m+1)$ intervals $I_j$ such that
\begin{equation}
\label{R1 and R_2 small}
\| \sR^1_j (\cdot)  z\|_{\EE_{0,\nu}(I_j)}+ \| \sR^2_j(\cdot) z\|_{\FF_\nu(I_j)}\le \eta \|z\|_{\EE_{1,\nu}(I_j)}, \quad z\in {_0}\EE_{1,\mu}(J_T), \quad 0\le j\le m.
\end{equation}

\bigskip\noindent
We set $z^0_0 := z^0$, and define $z^0_j$ iteratively by $z^0_j=\hat z_{j-1}(t_{j})$, 
with  $\hat z_{j-1} $  the unique solution of \eqref {linearized-j} on the interval $I_{j-1} $ obtained in a previous step for $j=1,\ldots, m$.

\smallskip\noindent
In a first step, we consider \eqref{linearized-j} for $j=0$.  For this case, let
\begin{equation*}
\begin{aligned}
W(I_0): &= \{ w\in \EE_{1,\mu}(I_0): w(0)=z^0 \ \  \text{and}\ \   \cB^\prime (z_*(0))z^0= {\sf g}(0)\}.
\end{aligned}
\end{equation*}
For $w\in W(I_0)$, let $z=z(w) \in \EE_{1,\mu}(I_0)$ be the unique solution of
\begin{equation}
\label{linearized-tau}
   \left\{
    \begin{aligned}
      \pa_t z   + \sA(z_*(0)) z& = {\sf f}(t)-  \sR^1_0(t)w(t)  &&\text{in}&&\Omega, && t\in I_0,\\
                    \sB(z_*(0))z   & ={\sf g}(t)-  \sR^2_0(t)w(t)  &&\text{on}&&\pa\Omega, && t\in I_0,\\
      z(0)&=z^0 &&\text{in}&&\Omega,
    \end{aligned}
    \right.
  \end{equation}
which exists according to Proposition~\ref{pro: abstract-linear-tilde}, as one readily verifies that the compatibility condition
$$\sB(z_*(0))z^0= g(0)- \sR^2_0(0)w(0)$$
is satisfied. Indeed, we have
$$
\sB(z_*(0))z^0 + \sR^2_0(0)w(0)= \sB(z_*(0))z^0 + [\sB^\prime (z_*(0))w(0)]z_*(0)= \cB^\prime(z_*(0))z^0 = {\sf g}(0),
$$
where the last two assertions follow from the definition of $W(I_0)$. We then define 
$$K: W(I_0) \to \EE_{1,\mu}(I) \quad \text{by} \quad K(w):=z(w)$$
with $z(w)$ the unique solution of  \eqref{linearized-tau}. Note that $K:W(I_0) \to W(I_0)$ is a self-map. 
We will show that $K$ is a contraction.

\medskip\noindent 
Let $w_i\in W(I_0)$ be given, with $i=1,2$. A moment of reflection reveals that
$\hat z:= K(w_1)-K(w_2)$ solves the linear problem
\begin{equation*}
   \left\{
    \begin{aligned}
      \pa_t \hat z  + \sA(z_*(0)) \hat z& = - \sR^1_0(t)(w_1(t) - w_2(t)) &&\text{in}&&\Omega, && t\in I_0, \\
                    \sB(z_*(0))\hat z   & =- \sR^2_0(t)(w_1(t) - w_2(t))   &&\text{on}&&\pa\Omega, && t\in I_0, \\
      \hat z(0)& =0 &&\text{in}&&\Omega.
    \end{aligned}
    \right.
\end{equation*}
Let $E:=\{z_*(t): t\in \bar J_T\}.$ As $z_*\in BU\!C(\bar J_T; X_{\gamma,\mu})$, we infer that $E\subset X_{\gamma,\mu}$ is 
compact. 
By Proposition~\ref{pro: abstract-linear-tilde}, there exists a constant $M$, which is uniform for any value of $z_*(t)\in E$ and does not depend on $|I_0|$,
such that 
\begin{equation*}
\|  \hat z \|_{\EE_{1,\mu}(I_0)}=
\| K(w_1)-K(w_2)\|_{\EE_{1,\mu}(I_0)}
\le M \Big[  \| \sR^1_0 (w_1-w_2)\|_{\EE_{0,\mu}(I_0)} + \|\sR^2_0 (w_1-w_2)\|_{\FF_\mu(I_0)} \Big].
\end{equation*}
Choosing $\eta= 1/2M$ it follows from~\eqref{R1 and R_2 small} that 
\begin{equation*}
\| \sR^1_0 (w_1-w_2)\|_{\EE_{0,\mu}(I_0)} + \|\sR^2_0 (w_1-w_2)\|_{\FF_\mu(I_0)} \le \frac{1}{2M} \|w_1-w_2\|_{\EE_{1,\mu}(I_0)}.
\end{equation*}
By the contraction mapping principle, the system~\eqref{linearized-tau} has a unique solution $\hat z_0\in \EE_{1,\nu}(I_0)$.
Hence,  $\hat z_0\in \EE_{1,\nu}(I_0)$ is also the unique solution of  \eqref{linearized} on $I_0$.

\medskip\noindent
In a next step, we show that 
\begin{equation}
\label{linearized-1}
   \left\{
    \begin{aligned}
      \pa_t z   + \sA(z_*(t_1))z &={\sf f}(t) - \sR^1_1(t)z &&\text{in}&&\Omega, && t\in I_1 , \\
                    \sB(z_*(t_1))z   &={\sf g}(t) -  \sR^2_1(t)z &&\text{on}&&\pa\Omega, &&t\in I_1 , \\
      z(t_1)&=z^0_1 &&\text{in}&&\Omega,
    \end{aligned}
    \right.
  \end{equation}
 has a unique solution, where $z^0_1= \hat z_0(t_1)$.
 Let
$$W(I_1)=\{ w\in \EE_{1}(I_1) : w(t_1)= z^0_1 \},
\quad \text{where}\quad  z^0_1 = \hat z_0(t_1).$$
As $\hat z_0$ solves \eqref{linearized} on $I_0$, the compatibility condition $\cB^\prime(z_*(t_1))z^0_1= g(t_1)$ is satisfied.
For $w \in W(I_1)$  we define 
$$ K: W(I_1) \to W(I_1)$$
as above.
Analogous arguments as above show that $K$ is a contraction.
 (The constants that appear in the estimates satisfy the same bounds as in step 1.)
Let $\hat z_1\in W(I_1)$ be the unique fixed point of $K$, that is, the unique solution of \eqref{linearized-1}.

\smallskip
Hence, setting
\begin{equation*}
z(t):=
\left\{
\begin{aligned} &\hat z_0(t), &&   t  \in [t_0, t_1], \\
                          & \hat z_1(t),  &&  t \in [t_1, t_{3}],
\end{aligned}
\right.                       
\end{equation*}
we can conclude that $z\in \EE_{1,\mu}([t_0, t_3]) $ is the (unique) solution of \eqref{linearized} on the interval $[t_0, t_3]$.
By reiterating this procedure, we obtain a unique solution $z\in \EE_{1,\mu}(J_T)$.

\medskip\noindent
The remaining assertions follow from the open mapping theorem, and an extension-restriction argument as in~\cite[Proposition 4.1(b)]{DSS24}. 
\end{proof}

\section{Existence and regularity of solutions}\label{section:well-posed}
\noindent
In this section we formulate and prove our main result concerning existence, uniqueness, and regularity of solutions.
We recall definition~\eqref{X-gamma-mu}.
\begin{theorem}
\label{thm: main}
Assume \eqref{positivity} and \eqref{mu-condition}.  
Let 
$ 
\cM_\mu= \{z\in X_{\gamma,\mu}: \sB(z)z=0 \}.
$ 

\medskip\noindent
Then for any  $z^0\in \cM_\mu$, there exists $T=T(z^0)>0$ such that the system \eqref{main-PDE-2} has a unique solution
$$(z, \pi)\in\EE_{1,\mu}(J_T)\times L_{p,\mu}(J_T; \dot H^1_p(\Omega)).$$
The solution can be continued to a maximal solution $z=z(\cdot,z^0)$ on an interval $[0, T_+(z^0))$.
\begin{itemize}
\item
The solution enjoys the additional time regularity: 
$$
t^k \partial^k_t z(\cdot, z^0)\in H^1_{p,\mu}(J_T; X_0)\cap L_{p,\mu}(J_T; X_1),
$$
for any  $T\in (0, T_+(z^0))$ and $k\in\N$. Moreover,  
$$z(\cdot, z^0)\in C^\infty ((0,T_+(z^0));X_1)\cap C^\omega ((0,T_+(z^0));X_{\gamma,\mu}).$$
\item
 \eqref{main-PDE-2} generates a local Lipschitz continuous semiflow on $\cM_\mu$.
 \vspace{2mm}
 \item
 If $ | d^0|=1$, then $| d(t)|=1$ for each $t\in [0, T_+(z^0))$.
 \vspace{2mm}
 \item
 If $c^0_k > 0$ in $\bar\Omega$, then $c_k(t)>0 $ in $\bar\Omega$ as well for each $t\in [0,T_+(z^0))$. 
 
 \smallskip\noindent
 More precisely, the following property holds. Suppose $c^0_k\ge c$ on $\bar\Omega$  for some constant $c>0$. Let $T\in (0, T_+(z^0))$ be given.
Then $c_k$ satisfies
\begin{align*}
    c_k(t)\ge Ae^{-Bt}c>0 
\end{align*}
for all $t\in [0,T]$, for constants $A,B>0$ depending on parameters and bounds on 
$\Phi$ in $L^\infty([0,T];W^{1,\infty}(\Omega)).$
 \end{itemize}
\end{theorem}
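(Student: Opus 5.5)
The plan is to obtain local existence by a fixed point argument for the quasilinear problem \eqref{nonlinear abstract}, built on the maximal regularity of the linearization \eqref{linearized} from Proposition~\ref{pro: linearized}. Given $z^0\in\cM_\mu$, we first construct a reference function $z_*\in\EE_{1,\mu}(J_T)$ with $z_*(0)=z^0$. For instance, $z_*$ can be the solution of the linear problem \eqref{linear-abstract-tilde} with frozen coefficients $\wt z=z^0$, data $\mathsf f=0$, and $\mathsf g$ chosen so that the compatibility condition holds; $\mathsf g=0$ works because $\sB(z^0)z^0=0$.

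We then rewrite \eqref{nonlinear abstract} as
\[
\partial_t z+\cA'(z_*)z-\sF'(z_*)z=\mathsf N_1(z),\qquad \cB'(z_*)z=\mathsf N_2(z),
\]
where $\mathsf N_1,\mathsf N_2$ collect the terms that are quadratically small around $z_*$. Proposition~\ref{pro: linearized}, with its $T$-uniform constant for data of vanishing trace, then gives a contraction on a small ball in ${_0}\EE_{1,\mu}(J_T)+z_*$ for $T$ small. The smallness comes from the analyticity \eqref{ABF-smooth}--\eqref{ABF-smooth-derivatives} and from the fact that $\|z_*-z^0\|$ is small in $BU\!C(\bar J_T;X_{\gamma,\mu})$. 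The compatibility condition at $t=0$ holds because $z^0\in\cM_\mu$.

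This is the implicit function theorem approach of \cite{PrSi16}. Applied to the map
\[
(z,z^0)\mapsto\bigl(\partial_t z+\cA(z)-\sF(z),\ \cB(z),\ z(0)-z^0\bigr),
\]
it also yields uniqueness, Lipschitz dependence on $z^0\in\cM_\mu$ (hence the local semiflow), and continuation to a maximal interval $[0,T_+(z^0))$. The pressure is recovered afterwards from the $v$-equation via $(I-\PP_H)$. For the additional time regularity we use the parameter trick of Angenent/Pr\"uss. We set $z_{\lambda}(t)=z(\lambda t)$ for $\lambda$ near $1$ and note that $z_\lambda$ solves a problem depending analytically on $\lambda$. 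The implicit function theorem then gives $\lambda\mapsto z_\lambda$ analytic into $\EE_{1,\mu}$, and differentiating at $\lambda=1$ produces $t^k\partial_t^k z$ in the stated class, together with analyticity in time into $X_{\gamma,\mu}$ and smoothness into $X_1$.

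The main obstacle I expect is the structural identity for $|d|=1$. Taking the inner product of the $d$-equation in \eqref{main-PDE-2} with $d$, and using that $P(d)$ annihilates the $d$-component, we set $\varphi=|d|^2-1$. A direct computation should give a linear parabolic equation of the form
\[
\gamma_1(\partial_t\varphi+v\cdot\nabla\varphi)-\Delta\varphi=2|\nabla d|^2\varphi+(\text{terms}\times\varphi)+\text{contributions of }\Omega(v)d\cdot d=0,
\]
with $\partial_\nu\varphi=0$ and $\varphi(0)=0$. Uniqueness for this linear problem then forces $\varphi\equiv0$; the regularity of $z$ makes all coefficients bounded.

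The remaining difficulty is to check carefully that the $\gamma_2P(d)D(v)d\cdot d$ and $\varepsilon_a P(d)(\cdots)d\cdot d$ terms produce factors of $\varphi$. Since $d\cdot P(d)w=(1-|d|^2)\,d\cdot w=-\varphi\, d\cdot w$, they do.

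For positivity we write the $c_k$-equation in nondivergence form,
\[
\partial_t c_k+v\cdot\nabla c_k-{\rm div}(\cD_k\nabla c_k)-z_k\cD_k\nabla\Phi\cdot\nabla c_k-z_k\,{\rm div}(\cD_k\nabla\Phi)\,c_k=0,
\]
with Robin-type boundary condition $\nu\cdot\cD_k\nabla c_k+b_k c_k=0$. Then $w=e^{Bt}c_k$ satisfies a problem with nonnegative zeroth-order coefficient once $B\ge\|z_k{\rm div}(\cD_k\nabla\Phi)\|_\infty$. Here ${\rm div}(\cD_k\nabla\Phi)$ need not be bounded by $W^{1,\infty}$ bounds alone, so I would instead use $\Phi$ through the elliptic equation or a Stampacchia truncation: test with $(w-Ac)^-$ and handle the boundary term using the trace inequality with the bound on $\nabla\Phi$. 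The constant $A$ absorbs the boundary flux contribution. I expect this boundary weighting in the comparison argument to be the most delicate part.
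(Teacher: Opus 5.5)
Your existence argument, the pressure recovery, the parameter trick and the treatment of $\varphi=|d|^2-1$ all follow the paper's proof. That includes the reference solution $z_*$ of the frozen problem with $\mathsf g=0$, the shifted problem \eqref{shifted-problem} linearized about $z_*$, and the contraction via Proposition~\ref{pro: linearized}.

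The step that fails is positivity. An energy/Gronwall estimate for $u^-$ with $u=e^{Bt}c_k-k$, where $k=Ac$ is a spatially constant level, does not close with only $W^{1,\infty}$ control of $\Phi$. The reason is that a constant is not a subsolution: its flux $z_k k\,\cD_k\nabla\Phi$ does not vanish.

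Suppose you test the divergence form with the full flux, so the boundary integral drops out by the no-flux condition. You are left with the drift term $z_k\int_\Omega (k-u^-)\,\cD_k\nabla\Phi\cdot\nabla u^-\,dx$. Its part $z_k k\int_\Omega \cD_k\nabla\Phi\cdot\nabla u^-\,dx$ is only linear in $u^-$. Young's inequality then leaves $Ck^2\,|\{u<0\}|$, which is bounded neither by $\|u^-\|_{L_2}^2$ nor by the good term $-B\int_\Omega e^{Bt}c_k\,u^-\,dx$ where $u^-$ is small.

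In your Robin formulation the same defect appears as $k\int_{\partial\Omega} b_k\,u^-$, with $b_k=z_k\nu\cdot\cD_k\nabla\Phi$ of indefinite sign. This is again linear in $u^-$, so no trace inequality and no choice of $A,B$ absorbs it. The Poisson equation does not rescue the interior term ${\rm div}(\cD_k\nabla\Phi)$ either. It only gives ${\rm div}(\ve(d)\nabla\Phi)=-\rho$, which involves a different matrix and is expressed in terms of $\rho$ rather than $\Phi$.

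What works is to truncate relative to the zero-flux profile $\underline c=m(t)e^{-z_k\Phi}$. Since $\cD_k\nabla\underline c+z_k\underline c\,\cD_k\nabla\Phi\equiv 0$, the difference $u=c_k-\underline c$ solves the same linear no-flux problem with source $-\underline c\,[m'/m-z_k(\partial_t\Phi+v\cdot\nabla\Phi)]$. This source is nonnegative once $m'\le -|z_k|\,\|\partial_t\Phi+v\cdot\nabla\Phi\|_{L_\infty(\Omega)}\,m$. The drift term becomes $-z_k\int_\Omega u^-\,\cD_k\nabla\Phi\cdot\nabla u^-\,dx$, which is quadratic in $u^-$, so Gronwall gives $u^-\equiv 0$ provided $m(0)\le c\,e^{-|z_k|\|\Phi(0)\|_\infty}$.

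This proves $c_k(t)>0$. Here $\|\partial_t\Phi\|_\infty\in L_1(0,T)$, which follows by differentiating the Poisson problem in time and using $\mu>1/p$. The resulting constants, however, involve $\partial_t\Phi$. The sharper bound with $A,B$ depending only on $\|\Phi\|_{L^\infty(0,T;W^{1,\infty})}$ is not proved in the paper either; it simply invokes \cite[Appendix B, Proposition 5]{Lee23}.

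A smaller point concerns the implicit function theorem. It cannot be applied literally to $(z,z^0)\mapsto(\partial_t z+\cA(z)-\sF(z),\cB(z),z(0)-z^0)$. Its $z$-derivative maps only onto the proper subspace $\{(\mathsf f,\mathsf g,w):\mathsf g(0)=\cB'(z^0)w\}$, and $\cM_\mu$ is not open in $X_{\gamma,\mu}$. The paper instead obtains the Lipschitz semiflow from the contraction estimates, as in \cite{DSS23}. In the parameter trick it handles the nonlinear compatibility condition through the right inverse $\cR(z^0)$ of Proposition~\ref{pro: right-inverse}. For the parameter trick alone, you could instead keep $v(0)=z^0$ fixed and work in $z+{_0}\EE_{1,\mu}(J_T)$, where $\cB(v)(0)=\cB(z^0)=0$ holds automatically.
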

\begin{proof}
For existence of solutions and the semiflow property we follow the proof of  \cite[Theorem 5.1]{DSS23}, see also the respective proofs of 
 \cite[Theorem~14]{LPS06} and \cite[Proposition~4.3.2]{Mey10}.
 The main ingredients will be Proposition~\ref{pro: abstract-linear-tilde} and  Proposition~\ref{pro: linearized}.
 We first show that \eqref{nonlinear abstract} has a unique solution $z$ with the properties asserted. 

\medskip\noindent

Let  $z^0\in \cM_\mu$ and $T_0>0$ be given.
By Proposition~\ref{pro: abstract-linear-tilde}, the problem 
\begin{equation*}
\left\{
\begin{aligned}
 \partial_t z + \sA(z^0) z &= 0 &&\text{in} \quad\Omega, \\
\sB(z^0) z & = 0 &&\text{on}  \quad \partial\Omega, \\
       z(0)& = z^0 &&\text{in} \quad \Omega,
\end{aligned}
\right.
\end{equation*}
has a (unique) solution $z_* \in \EE_{1,\mu}(J_{T_0})$.
We then set 
\begin{equation*}
\begin{aligned}
\sA_* (t) z  &=  \cA'(z_* (t)) z  - \sF^\prime(z_* (t))  z, \\
\sB_* (t) z  &= \cB'(z_* (t))z  , 
\end{aligned}
\end{equation*}
for $t\in J_{T_0}$, where the functions $(\cA, \cB,\sF)$ are defined in \eqref{F(z)} and \eqref{AB}.
With this notation and~\eqref{ABF-smooth-derivatives}, the nonlinear system \eqref{nonlinear abstract} can be rewritten as
\begin{equation}
\label{shifted-problem}
\left\{
\begin{aligned}
      \pa_t z+\sA_* (t)  z  &=    \cA^\prime(z_*)z -\cA(z) +\sF(z) - \sF'(z_*)z &&\text{in}&&\Omega, \\
      \sB_*(t)  z                &=    \cB^\prime (z_*)z - \cB(z)  &&\text{on}&&\pa\Omega,\\
      z(0)&=z^0  &&\text{in}&&\Omega.
    \end{aligned}
    \right.
\end{equation}
In a first step, we consider the linear problem
\begin{equation}
\label{linear pb-z*}
\left\{
\begin{aligned}
      \pa_t z+\sA_* (t)  w  &=    \cA^\prime(z_*)z_*-\cA(z_*) +\sF(z_*) - \sF'(z_*)z_* &&\text{in}&&\Omega, \\
      \sB_*(t)  w               &=    \cB^\prime (z_*)z_* - \cB(z_*)  &&\text{on}&&\pa\Omega,\\
      w(0)&=z^0  &&\text{in}&&\Omega.
    \end{aligned}
    \right.
\end{equation}
The compatibility condition
$
\sB_*(0)  z^0=  \cB^\prime (z^0)z^0 - \cB(z^0) 
$
is satisfied, as $\cB(z^0)=\sB(z^0)z^0=0$ by assumption.  
Therefore, Proposition~\ref{pro: linearized} implies that for any $T_0>0$, the linear system \eqref{linear pb-z*} has a unique solution $w\in \EE_{1,\mu}(J_{T_0})$.

\medskip\noindent
Fix $T_0, R_0>0$. 
For every $T\in (0,T_0]$ and $R\in (0,R_0]$, we define a closed set in $\EE_{1,\mu}(J_T)$ by
$$
\Sigma(T,R)=\{ z\in \EE_{1,\mu}(J_T): \|z-w\|_{\EE_{1,\mu}(J_T)} \leq R,\,  z(0)= z^0 \},\quad
\text{where $w$ solves \eqref{linear pb-z*}}.
$$
In a second step, given any $\hat{z}\in \Sigma(T,R)$, we consider the  linear system 
\begin{equation}
\label{fixed point pb-z*}
\left\{
\begin{aligned}
      \pa_t z+\sA_* (t)  z  &=  \cA'(z_*) \hat{z}   - \cA(\hat{z})  +\sF(\hat{z}) - \sF'(z_*) \hat{z}  &&\text{in}&&\Omega, \\
      \sB_*(t)  z     &= \cB'(z_*) \hat{z}  - \cB(\hat{z})  &&\text{on}&&\pa\Omega,\\
      z(0)&=z^0  &&\text{in}&&\Omega.
    \end{aligned}
    \right.
\end{equation}
As $\cB(z^0)=\sB(z^0)z^0=0$, the compatibility condition 
$
\sB_*(0)  z^0= \cB'(z^0) z^0   - \cB(z^0)  
$
is satisfied and we can infer from Proposition~\ref{pro: linearized} that \eqref{fixed point pb-z*} has a unique solution 
$$z:=  \cT(\hat{z}) ~\in~\EE_{1,\mu}(J_T).$$
It is clear that $z\in \Sigma(T,R)$ solves \eqref{shifted-problem}  iff $z$ is a fixed point of $\cT$. 
 
 \medskip\noindent
Employing~Proposition~\ref{pro: linearized}, one shows as in the proof of   \cite[Theorem 5.1(a)]{DSS23} that  
$$\cT : \Sigma(T,R) \to \Sigma(T,R)$$
is a self-map  and a contraction,  provided $R$ and $T$ are chosen small enough.

The contraction mapping principle then implies the existence of a 
 unique solution $z\in \Sigma(T,R)$ to \eqref{shifted-problem}, and hence to \eqref{nonlinear abstract}, on the time interval $[0,T]$.
A standard argument then yields that $z$ is also the unique solution in  $\EE_{1,\mu}(J_T)$.

\medskip\noindent
The existence of a maximal interval of existence  $[0,T_+(z^0))$ can be obtained in a standard way as in \cite[Corollary~5.1.2]{PrSi16}. 

\medskip\noindent
The additional time regularity can be established by using the so-called parameter trick in conjunction with the implicit function theorem.
For this, we fix $T\in (0,T_+(z^0))$ and $\epsilon\in (0,1)$ so small  that $(1+\epsilon)T<T_+(z^0)$.
Let $z\in \EE_{1,\mu}(J_T)$ be the unique solution of \eqref{nonlinear abstract} with initial value $z^0$ and
let $z_\lambda(t)=z(\lambda t)$ for $t\in [0,T]$.
Then $v=z_\lambda$ solves
\begin{equation}
\label{time regularity pb}
\left\{
\begin{aligned}
      \pa_t v+\sA_* (t)  v  &= \fF(\lambda, v) &&\text{in}&&\Omega, \\
      \sB_*(t)  v     &=  \fG( v )&&\text{on}&&\pa\Omega,\\
      v(0)&= z^0 &&\text{in}&&\Omega ,
    \end{aligned}
    \right.
\end{equation}
on $[0,T]$, where here
\begin{align*}
\sA_* (t)  v  &= \cA'(z (t)) v - \sF^\prime(z (t))  v , \quad \sB_* (t) v   = \cB'(z (t))v , \\  
 \fF(\lambda, v) &=  -\big( \lambda \cA(v)-\cA^\prime(z)v\big) + \lambda \sF(v) - \sF^\prime(z )v,  \\
\fG(v)  & =    -\big(\cB(v) - \cB^\prime(z) v\big).
\end{align*}
From $\fF(1,v)=-\big(\cA(v)-\cA^\prime(z)v\big) + \sF(v)-\sF^\prime(z)v$ and the definition of $G(v)$
one readily verifies that
\begin{equation}
\label{F-G-derivative}
\partial_2 \fF (1,z)=0, \quad \fG'(z)=0,
\end{equation}
where $\partial_2 \fF(1,z)$ denotes the Fr\'echet derivative of $\fF(1,v)$ with respect to the variable $v$, evaluated at $(1,z)$.
We now define $\cF: (1-\epsilon,1+\epsilon) \times \EE_{1,\mu}(J_T) \to \EE_{1,\mu}(J_T)$   by
$$
\cF(\lambda, v):= v - K(\lambda, v),
$$
 where $\hat v= K(\lambda, v)$ is the solution of
\begin{equation}
\label{v-hat}
\left\{
\begin{aligned}
\partial_t \hat v + \sA_*(t) \hat v&=  \fF(\lambda, v) &&\text{in}&&\Omega, \\
  \sB_*(t) \hat v     &=  \fG( v )&&\text{on}&&\pa\Omega,\\
      \hat v(0)&= \hat z_0 + \cR(z^0)\fG(v(0)) &&\text{in}&&\Omega .
    \end{aligned}
    \right.
\end{equation}
Here,  $ \hat{z}_0 = z^0- \cR(z^0)   \cB^\prime(z^0)z^0 $, where
$\cR(z_0)\in \cL( Y_{\gamma,\mu},  X_{\gamma,\mu})$
is a right inverse of $\cB^\prime(z^0)\in\cL(X_{\gamma,\mu}, Y_{\gamma,\mu})$, 
see Proposition~\ref{pro: right-inverse}. 
It follows that
$\cB^\prime(z^0) \hat z_0 =0 ,$
and hence,  the compatibility condition 
$$ \cB^\prime(z^0) (\hat{z}_0+  \cR(z^0)   \fG(v(0)) )= \fG(v(0))$$ 
is again satisfied in \eqref{v-hat}, so that $K(1,v)$ is meaningfully defined.
We now observe that 
$\cF(1,z)=0,$
as $z$ and $\hat v = K(1,z)$ solve the same problem, due to
$$
\hat v(0)= z^0- \cR(z^0)   \cB^\prime(z^0)z^0 + \cR(z^0)\fG(z(0)) = z^0.
$$  
Next, we note that by \eqref{F-G-derivative},  $\partial_2 \cF(1,z) ={\rm Id} \in {\rm Isom }\,(\EE_{1,\mu}(J_T))$.
Therefore, the implicit function theorem implies that there exist $\delta \in (0,\epsilon) $ and 
\begin{equation}
\label{Psi-analytic}
\Psi\in C^\omega( (1-\delta, 1+\delta), \EE_{1,\mu}(J_T))
\end{equation}
such that
$$
\cF(\lambda, \Psi(\lambda))=0,\quad \lambda \in (1-\delta, 1+\delta).
$$
Hence,  $ \Psi(\lambda) = K(\lambda, \Psi(\lambda))$ and we conclude that $v=\Psi(\lambda)$ solves
\begin{equation*}
\left\{
\begin{aligned}
      \pa_t v+\sA_* (t)  v  &= \fF(\lambda, v) &&\text{in}&&\Omega, \\
      \sB_*(t)  v     &=  \fG( v )&&\text{on}&&\pa\Omega,\\
      v(0)&= z^0- \cR(z^0)   \cB^\prime(z^0)z^0  + \cR(z^0) \fG( v(0) ) &&\text{in}&&\Omega .
    \end{aligned}
    \right.
\end{equation*}
From $\cF(1,z)=0$, we infer that $z=\Psi(1)$. We want to show that $\Psi(\lambda)=z_\lambda$.
To this end, notice that $z^0(\lambda):={\rm tr}_0 \Psi(\lambda)$ satisfies
\begin{equation*}
\begin{aligned}
z^0(\lambda) &=  z^0- \cR(z^0)   \cB^\prime(z^0)z^0  + \cR(z^0) \fG( z^0(\lambda) )  \\
 &=z^0-\cR(z^0)\big(\cB(z^0(\lambda)) - \cB^\prime(z^0)(z^0(\lambda)-z^0)\big).
\end{aligned}
\end{equation*}
By using the fact that $\cB(z^0)=0$, we further obtain
\begin{equation*}
\begin{split}
z^0(\lambda) -   z^0 
 =- \cR(z^0) \big(\cB(z^0(\lambda))-\cB(z^0) - \cB^\prime (z^0)(z^0(\lambda)-z^0))\big).
\end{split}
\end{equation*}
We can thus conclude  that
\begin{align*}
\| z^0(\lambda) -   z^0 \|_{X_{\gamma,\mu}}  
&\leq  \eps (\| z^0(\lambda) -   z^0 \|_{X_{\gamma,\mu}})\| z^0(\lambda) -   z^0 \|_{X_{\gamma,\mu}}\\
 &\leq  \eps (\| \Psi(\lambda) -  \Psi(1) \|_{\EE_{1,\mu}(J_T) } )\| z^0(\lambda) -   z^0 \|_{X_{\gamma,\mu}},
\end{align*}
where $\eps(r) \to 0$ as $r\to 0^+$.
By choosing $\delta $ so small that 
$$\sup\limits_{\lambda\in (1-\delta, 1+\delta) } \eps (\| \Psi(\lambda) -  \Psi(1) \|_{ \EE_{1,\mu}(J_T) } ) \leq  1/2,$$ 
we have $z^0(\lambda)=z^0$ for all $\lambda\in (1-\delta, 1+\delta) $. 
Thus, $\Psi(\lambda)$ solves \eqref{time regularity pb} and hence $\Psi(\lambda)=z_\lambda$. The differentiability of $\Psi$ implies 
$$\left(\frac{d}{d\lambda}\right)^k \Psi(\lambda)\Big|_{\lambda =1} = t^k \partial^k_t z \in \EE_{1,\mu}(J_T),\quad k\in \N.$$
It follows that 
$z\in H^k_p((\eps, T); X_1)$
for any $\eps \in (0,T)$ and any $k\in \N$. By Sobolev embedding, we conclude that $z\in C^\infty((\eps,T);X_1)$. As $\eps$ and $T\in (0, T_+(z^0))$ can be
chosen arbitrarily, we infer that
$$z(\cdot, z^0)\in C^\infty((0,T_+(z^0)); X_1).$$
To show analyticity in time, we consider an arbitrarily fixed $t_0\in (0,T)$. It  follows from the fact that the mappings 
$$
\EE_{1,\mu}(J_T) \hookrightarrow BU\!C (J_T) \underset{ {\rm tr}_{t_0}  } {\to  } X_{\gamma,\mu} ,\quad {\rm tr}_{t_0} v = v(t_0),
$$
are linear and bounded, and from \eqref{Psi-analytic} that
$ [\lambda \mapsto z(\lambda t_0)]\in C^\omega( (1-\delta, 1+\delta);  X_{\gamma,\mu}) $  
This, in turn, yields the assertion $z\in C^\omega((0, T); X_{\gamma,\mu}).$
\goodbreak
\medskip\noindent
The proof  that \eqref{nonlinear abstract} generates a Lipschitz continuous semiflow follows the same lines as the proof of \cite[Theorem 5.1(b)]{DSS23},
see also 
 \cite[Theorem~14]{LPS06} and \cite[Proposition~4.3.2]{Mey10}.
 
 \medskip\noindent
Having established that \eqref{nonlinear abstract} has a (unique) solution  $z=(\bc, v, d) \in \EE_{1, \mu}(J_T)$, 
it remains to show how to generate a solution for \eqref{main-PDE-2}.
From  \eqref{nonlinear abstract} we know that the velocity $v$ satisfies
\begin{equation}
\label{v-equation-1}
\partial_t v= \PP_H w \quad \text{on}\ \  J_T,
\end{equation}
where
$
w  = A_v(d)v -\gamma^{-1}_1 R_1(d) [\Delta d - R_0(d) v +F_d(z)]  +  F_v(z).
$
 
\smallskip\noindent
Using the relation \eqref{substitution}, which is valid as $z$ is a solution to \eqref{nonlinear abstract}, results in 
 \begin{equation}
\label{w}
w  = A_v(d)v -R_1(d) \partial_t d+  F_v(z).
\end{equation}
 Note that $w\in L_{p,\mu}(J_T; L_p(\Omega; \R^n))$.
For $t\in J_T$, let  $\pi(t)$ be the  solution to 
  $$
  (\nabla\pi(t)| \nabla\phi)_\Omega =( w(t)| \nabla\phi)_\Omega , \quad \phi\in \dot{H}_{p'}^1(\Omega).
  $$ 
  It follows that $\pi\in L_{p, \mu}( J_T;\dot{H}_p^1(\Omega))$. We then have
  $$
  \PP_H w(t)=w(t)-\nabla \pi(t), \quad t\in J_T,
  $$ 
  by definition of the Helmholtz projection.
It follows from \eqref{v-equation-1} that $\pa_t v= \PP_H w = w- \nabla \pi$
  and, hence,
  $ \pa_t v+ \nabla \pi = w. $
 With \eqref{w} we conclude that 
 $$
 \partial_t v  -A_v(d)v +R_1(d)\partial_t d +\nabla \pi   =  F_v(z), \quad t\in J_T.
 $$    
Consequently,  
 $(z,\pi)$ solves \eqref{main-PDE-3}, and hence also the system \eqref{main-PDE-2}, on $J_T$.
 
 \medskip\noindent
Next we will  show that the constraint $|d(t)|=1$ is preserved, provided it holds initially.
 Suppose $|d^0|=1$ and let $T\in (0, T_+(z^0))$ be fixed.
Let $\varphi(t)= |d(t)|^2 -1$ for $t\in [0,T]$.
Then  $\varphi$ has sufficient regularity to justify all the steps below.
Multiplying $\eqref{main-PDE-2}_6$ by $d$ and using the properties
 $$
 P(d)d = -\varphi d, \quad  (\Delta d + |\nabla d|^2 d)\cdot d= \frac{1}{2} \Delta \varphi + | \nabla d|^2 \varphi
 $$
we verify that $\varphi$ satisfies the following parabolic  equation 
\begin{equation}
\label{phi-equation}
\begin{aligned}
\frac{\gamma_1}{2} \Big(\partial_t \varphi +  v\cdot  \nabla \varphi \Big) -\gamma_2 ((\nabla v) d\cdot d)\varphi  -\frac{1}{2} \Delta \varphi - |\nabla d|^2 \varphi
+ \ve_a (\nabla \Phi \cdot d)^2 \varphi  &=0,  \\
 \partial_\nu \varphi &= 0, \\
 \varphi (0) & =0.
\end{aligned}
\end{equation}
We shall show that $\varphi=0$ on $[0,T]$.
Multiplying $\eqref{phi-equation}_1$ by $\varphi$  and integrating over $\Omega$ (using ${\rm div}\, v=0, v=0 $ on $\pa\Omega$) yields
\begin{equation*}
\begin{aligned}
&\frac{\gamma_1}{4} \frac{d}{dt} \int_\Omega \varphi^2\,dx 
+\frac{1}{2} \int_\Omega | \nabla \varphi |^2\,dx+ \ve_a \int_\Omega (\nabla \Phi \cdot d)^2 \varphi^2 \,dx  \\
& \qquad \qquad \qquad = \gamma_2 \int_\Omega ( (\nabla v) d\cdot d)\varphi^2 \,dx  +\int_{\Omega} |\nabla d|^2 \varphi^2 \,dx .
\end{aligned}
\end{equation*}
Hence, with $\psi(t) =  \int_\Omega \varphi^2(t)\,dx$ we obtain
$
\frac{d}{dt} \psi (t) \le  M \,\psi(t)$ for $t\in [0,T],$
where
$$
M=4\gamma^{-1}_1 \max_{(t,x)\in [0,T]\times\bar\Omega}\{\gamma_2| (\nabla v)d \cdot d|(t,x) + |\nabla d|^2(t,x) \}.
$$
The maximum indeed exists as $ v\in C([0,T], C^1(\bar\Omega; \R^n ))$ and $d \in C([0,T];C^2(\bar\Omega; \R^n))$.
By Gronwall's inequality,
$$\psi(t)\le \psi(0)\exp(M t), \quad t\in [0,T].$$
As $\psi(0)=0$, it follows that $\psi(t)=0$ on $[0,T]$. Since $T\in (0, T_+(z^0))$ can be chosen arbitrarily, the assertion follows.

\medskip\noindent
The assertion concerning positivity of concentrations can be obtained by analogous arguments as in the proof of 
\cite[Appendix B, Proposition 5]{Lee23}. For this, we observe that the matrices $\cD_k$ are positive definite by \eqref{D-positive-definite}
and $\Phi\in C ([0,T]; C^1(\bar\Omega))$ for each $T\in (0, T_+(z^0))$ by   \eqref{Ws-BUC} and Proposition~\ref{pro: Phi-time}.

\medskip\noindent
 The proof of Theorem~\ref{thm: main} is now complete.
\end{proof}
\begin{remark}
\label{rem: various formulations}
Our mathematical analysis concerning the existence of solutions and their properties, such as temporal regularity, the semiflow property, and global behavior, 
is based on system~\eqref{nonlinear abstract}. 
We have shown above that the pressure term $\pi$ can always be recovered once a solution $z=(\bc, v, d)$ of~\eqref{nonlinear abstract} has been obtained. 
This allows us to make statements about the solvability of~\eqref{main-PDE-2}, and consequently also of~\eqref{main-PDE}, taking Remark~\ref{rem: equivalent} into account.

On the other hand, the pressure term $\nabla \pi$ in~\eqref{main-PDE}, respectively~\eqref{main-PDE-2}, can be eliminated by applying the Helmholtz projection, which leads back to system~\eqref{nonlinear abstract}.

With this understanding, we will move freely between the various formulations of the model when making statements about solutions. 
Hence, even though all properties are established for the dynamic variable $z=(\bc, v, d)$, 
we will  take the liberty of including the pressure term and write $(z,\pi)$ when convenient.

\end{remark}


\section{Energy dissipation and equilibria}
\label{energy estimate}
\noindent
Let $(\bc, v, d,\pi)$ be the solution of \eqref{main-PDE} defined on its maximal interval of existence $[0, T_+(z^0))$, 
where $z^0 = (\bc^0, v^0, d^0) \in X_{\gamma,\mu}$
with  $ |d^0 |=1$ and $c^0_k>0 $ in $\bar\Omega$.  The energy functional for system  \eqref{main-PDE} is defined by
\begin{align}
\label{E}
    E(t) = \int_{\Omega}\left(\frac{1}{2}|v|^2+\frac{1}{2}|\na d|^2+\sum_k c_k\ln c_k+\frac{1}{2}\ve(d)\na\Phi\cdot\na\Phi\right)dx ,\quad t\in [0, T_+(z^0)).
\end{align}
For notational brevity we suppress the time variable $t$ on the right side. 
Then we have the following energy estimate:
\begin{proposition}
\label{energy-est}
\begin{equation*}
\begin{aligned}
 \frac{d}{dt}E(t)  
 & +  \alpha\sum_k \int_{\Omega}c_k|\na \mu_k|^2\,dx  +  \alpha_4 \int_\Omega |D(v)d|^2\,dx \\
&+\int_{\Omega}\left(\alpha_1(d\cdot D(v)d)^2+(\gamma_2+\alpha_2+\alpha_3)(\od\cdot D(v)d)+\beta |D(v)d|^2+\gamma_1|\od|^2\right)dx\le 0\\
    \end{aligned}
\end{equation*}
for $t\in [0, T_+(z^0)).$
\end{proposition}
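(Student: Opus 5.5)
The plan is to derive an exact energy identity for smooth solutions on $[0,T]$ with $T<T_+(z^0)$, and then weaken two terms to inequalities. The regularity from Theorem~\ref{thm: main} (smoothness for $t>0$, $|d|=1$, $c_k>0$, $\Phi\in C([0,T];C^1(\bar\Omega))$) justifies every integration by parts; the endpoint $t=0$ is recovered by continuity. I would work with \eqref{main-PDE}, using $|d|=1$ throughout. In particular $\partial_t d\cdot d=0$ and $\od\cdot d=0$, and $\sigma_L=\sigma^m_L$. The computation splits into three tested equations, followed by a bookkeeping step in which all non-dissipative terms cancel.

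\emph{Step 1: ions and electrostatics.} Since $\sum_k\int_\Omega c_k\,dx$ is conserved by the no-flux condition, I get $\frac{d}{dt}\int c_k\ln c_k=\int \partial_t c_k\ln c_k$. Differentiating $\frac12\int\ve(d)\nabla\Phi\cdot\nabla\Phi$ in time and using the time derivative of \eqref{equation-phi} (with $\Phi=0$ on $\partial\Omega$) should give
\[
\tfrac{d}{dt}\tfrac12\int_\Omega\ve(d)\nabla\Phi\cdot\nabla\Phi\,dx=\sum_k z_k\int_\Omega\partial_t c_k\,\Phi\,dx-\ve_a\int_\Omega(\nabla\Phi\cdot d)(\nabla\Phi\cdot\partial_t d)\,dx .
\]
Together these give $\sum_k\int\partial_t c_k\,\mu_k$ plus the $\ve_a$-term. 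Inserting the Nernst--Planck equation with its no-flux boundary condition yields $-\sum_k\int c_k\cD_k\nabla\mu_k\cdot\nabla\mu_k$, which is $\le -\alpha\sum_k\int c_k|\nabla\mu_k|^2$ by \eqref{D-positive-definite}. It also yields an advection remainder. Because ${\rm div}\,v=0$, the $\ln c_k$ part of that remainder vanishes, and the rest equals $\int \rho\, v\cdot\nabla\Phi$ with $\rho=\sum z_kc_k=-{\rm div}(\ve(d)\nabla\Phi)$.

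\emph{Step 2: velocity and director.} Testing the momentum equation with $v$ gives
\[
\tfrac12\tfrac{d}{dt}\|v\|^2=\int_\Omega\big[(\nabla d\odot\nabla d):\nabla v-\sigma_L:\nabla v-((\nabla\Phi\otimes\nabla\Phi)\ve(d)):\nabla v\big]\,dx .
\]
For the director I use $\frac{d}{dt}\frac12\|\nabla d\|^2=-\int\Delta d\cdot\partial_t d$ and introduce $h:=\Delta d+|\nabla d|^2d+\ve_a P(d)(\nabla\Phi\otimes\nabla\Phi)d$. Since $\partial_t d\perp d$, we have $-\int\Delta d\cdot\partial_t d=-\int h\cdot\partial_t d+\ve_a\int(\nabla\Phi\cdot d)(\nabla\Phi\cdot\partial_t d)$, and the last term cancels exactly the $\ve_a$-term from Step~1. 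Next I write $\partial_t d=\od-v\cdot\nabla d+\Omega(v)d$ and use $h=\gamma_1\od+\gamma_2P(d)D(v)d$. The $h\cdot\od$ piece then produces $-\gamma_1|\od|^2-\gamma_2\,\od\cdot D(v)d$.

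\emph{Step 3: cancellations, which I expect to be the main obstacle.} Two groups of terms must cancel, and a third must assemble into the quadratic form. First, the transport terms $\int h\cdot(v\cdot\nabla d)$ should cancel against the Ericksen term $\int(\nabla d\odot\nabla d):\nabla v$. Second, the electric contributions must cancel: these are $\int\rho\,v\cdot\nabla\Phi$, the Maxwell-stress term, and the $\ve_a$-part of $h\cdot(v\cdot\nabla d)$. The tool for both is the pointwise identity ${\rm div}(\nabla d\odot\nabla d)=(\nabla d)^{\sf T}\Delta d+\frac12\nabla|\nabla d|^2$. For the electric group I would use the analogous identity for ${\rm div}((\nabla\Phi\otimes\nabla\Phi)\ve(d))$. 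This identity involves $\nabla(\ve(d))$, and that is where the $v\cdot\nabla d$ piece enters; all gradient terms then drop by ${\rm div}\,v=0$ and $v=0$ on $\partial\Omega$. I would first verify the electric identity carefully in index notation, since the sign and ordering conventions of \eqref{divergence-matrix} make this the error-prone step.

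Third, the remaining terms are $-\int h\cdot\Omega(v)d$ and $-\int\sigma_L:\nabla v$. Expanding \eqref{Leslie-stess} against $\nabla v=D(v)+\Omega(v)$ and using $\gamma_1=\alpha_3-\alpha_2$, $\gamma_2=\alpha_6-\alpha_5$, the antisymmetric pieces should cancel against $h\cdot\Omega(v)d$. What survives is $-\int\big(\alpha_1(d\cdot D(v)d)^2+\alpha_4|D(v)|^2+(\gamma_2+\alpha_2+\alpha_3)\,\od\cdot D(v)d+\beta|D(v)d|^2\big)$ together with $-\gamma_1\int|\od|^2$. Finally, $|d|=1$ gives $|D(v)d|\le|D(v)|$, so $\alpha_4|D(v)|^2\ge\alpha_4|D(v)d|^2$. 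Combining this with the diffusion bound from Step~1 yields the stated inequality.
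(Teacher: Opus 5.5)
Your proposal is correct and follows essentially the same route as the paper's proof in Appendix~\ref{Appendix B}. Both test the Nernst--Planck equations with $\mu_k$ together with the time derivative of the electrostatic energy, test the momentum equation with $v$ and the director equation with its time/material derivative, use the same Leslie-stress algebra based on $\gamma_1=\alpha_3-\alpha_2$, $\gamma_2=\alpha_6-\alpha_5$, and cancel the Ericksen and electric cross terms in the same way. The differences are only organizational: you cancel the electric cross terms via the pointwise identity for ${\rm div}((\nabla\Phi\otimes\nabla\Phi)\ve(d))$ (where $\nabla\ve(d)$ produces the $v\cdot\nabla d$ term) rather than testing the Poisson equation with $v\cdot\nabla\Phi$ and symmetrizing as in \eqref{identity}, and you make explicit the weakening $\alpha_4|D(v)|^2\ge\alpha_4|D(v)d|^2$ (valid since $|d|=1$), which the paper uses only tacitly to reach the stated term.
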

\begin{proof}
The proof will be given in Appendix \ref{Appendix B}.
\end{proof}

The following characterization of equilibrium is motivated by experimentally observed configurations. For instance, such equilibria have been investigated in the electro‑osmotic setting  \cite{TCGLW16} and, more recently, in the context of high‑order nonlinear electrophoresis \cite{Rajabi24}.
\goodbreak
\begin{proposition}
\label{pro: Lyapunov-equilibria}
Suppose  \eqref{positivity} holds. Then
\begin{enumerate}
\item[(a)] $-E$ is a strict Lyapunov function for system \eqref{main-PDE}, or equivalently for \eqref{main-PDE-2}.
\vspace{2mm}
\item[(b)]
The set $\cE=\{(\bc, v,d, \pi )\}$ of  equilibria is characterized by
$$c_k = Z_k e^{-z_k \Phi}, \quad 1\le k\le N, \quad  v=0,$$ 
where $Z_k$ are (free) positive constants.
Moreover, $(\Phi, d)$  at equilibrium satisfy the following system of equations
\begin{equation*}
\left\{
\begin{aligned}
 -{\rm div}\, (\ve(d)\nabla \Phi)&=\Sigma_{k=1}^{N} z_k Z_k e^{-z_k \Phi }  &&\text{in} && \Omega,\\
       \Phi &= 0 &&\text{on} && \partial \Omega, \\
\Delta d + |\nabla d|^2 d &= -\ve_a P(d) (\nabla\Phi \otimes \nabla\Phi)d  &&\text{in} &&\Omega,\\
                                          | d |  &= 1  &&\text{in} &&\Omega,\\
                            \partial_\nu d &=0     &&\text{on} &&\pa\Omega, \\
\end{aligned} 
\right. 
\end{equation*}
\end{enumerate}
and $\pi$ is, up to a constant, given by
\begin{equation*}
  \pi  =  \sum_k c_k  + \frac{1}{2}\Big( \ve_{\perp} |\nabla \Phi |^2 +\ve_a ((\nabla\Phi \otimes \nabla\Phi)d)\cdot d -| \nabla d |^2 \Big). 
\end{equation*}
 \end{proposition}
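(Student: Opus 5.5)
Part (a) will follow from the energy inequality of Proposition~\ref{energy-est}, once we show that the dissipation
\[
\mathcal D := \alpha\sum_k \int_\Omega c_k|\nabla\mu_k|^2 + \alpha_4\int_\Omega |D(v)d|^2+\int_\Omega\big(\alpha_1(d\cdot D(v)d)^2+(\gamma_2+\alpha_2+\alpha_3)\,\od\cdot D(v)d+\beta|D(v)d|^2+\gamma_1|\od|^2\big)
\]
is nonnegative and vanishes only at equilibria. Nonnegativity: at each point, view the last three terms as the quadratic form $\beta|a|^2+(\gamma_2+\alpha_2+\alpha_3)\,a\cdot b+\gamma_1|b|^2$ with $a=D(v)d$ and $b=\od$. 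By \eqref{positivity} it is positive definite, because $\gamma_1>0$ and its discriminant satisfies $(\gamma_2+\alpha_2+\alpha_3)^2-4\beta\gamma_1<0$, which also forces $\beta>0$. The remaining terms are nonnegative since $\alpha_1\ge0$, $\alpha_4>0$ and $c_k>0$ (Theorem~\ref{thm: main}). Hence $E$ is nonincreasing along solutions.

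Strictness: suppose $E$ is constant on some interval. Then $\mathcal D\equiv 0$ there, so $\nabla\mu_k=0$, $D(v)d=0$ and $\od=0$. The expression in Proposition~\ref{energy-est} contains only $\alpha_4|D(v)d|^2$, which is not enough to conclude $v=0$. I therefore expect the honest dissipation, before it was estimated in Appendix~B, to contain the full term $\alpha_4\int|D(v)|^2$; I will go back to that computation to confirm this, and it is the step I regard as the main obstacle. Granting it, $D(v)=0$, and Korn's inequality with $v=0$ on $\partial\Omega$ gives $v=0$. Then $\od=\partial_t d=0$, and constant $\mu_k$ gives $c_k=Z_ke^{-z_k\Phi}$, so $\partial_t c_k=-z_kc_k\partial_t\Phi$. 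Since $d$ is frozen, $\Phi$ solves the Poisson equation $-{\rm div}(\ve(d)\nabla\Phi)=\sum_k z_kZ_ke^{-z_k\Phi}$. This nonlinearity is monotone decreasing in $\Phi$, so the solution is unique for fixed $Z_k$. The $Z_k$ are fixed by mass conservation $\int_\Omega c_k=$ const. Hence $\Phi$, and then $c_k$, are time independent, and the solution is an equilibrium. This proves (a).

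For (b), equilibria have zero dissipation, so the preceding paragraph applies: $v=0$, $c_k=Z_ke^{-z_k\Phi}$, and $\Phi$ solves the stated Poisson–Boltzmann problem. Setting $v=0$ and $\partial_td=0$ in $\eqref{main-PDE}_8$ (so $\od=0$) gives $\Delta d+|\nabla d|^2d+\ve_aP(d)(\nabla\Phi\otimes\nabla\Phi)d=0$, together with $|d|=1$ and $\partial_\nu d=0$. Conversely, any such tuple is a stationary solution, with $\sigma_L=0$.

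For the pressure, the stationary momentum equation reads
\[
\nabla\pi=-{\rm div}(\nabla d\odot\nabla d)+{\rm div}\big((\nabla\Phi\otimes\nabla\Phi)\ve(d)\big).
\]
I will use the identity ${\rm div}(\nabla d\odot\nabla d)=\nabla\tfrac12|\nabla d|^2+(\nabla d)^{\sf T}\Delta d$ and insert the director equation; the $|\nabla d|^2d$ contribution drops because $(\nabla d)^{\sf T}d=\tfrac12\nabla|d|^2=0$. For the Maxwell term I will expand the divergence and use the Poisson equation to produce $-(\sum_k z_kc_k)\nabla\Phi=\nabla\sum_k c_k$, the latter because $\nabla c_k=-z_kc_k\nabla\Phi$. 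The leftover terms should then combine with the director contribution $(\nabla d)^{\sf T}\ve_aP(d)(\nabla\Phi\otimes\nabla\Phi)d$ into $\nabla\big(\tfrac12\ve_\perp|\nabla\Phi|^2+\tfrac12\ve_a(\nabla\Phi\cdot d)^2\big)$, via the chain rule $\nabla(\nabla\Phi\cdot d)=(\nabla^2\Phi)d+(\nabla d)^{\sf T}\nabla\Phi$. This bookkeeping is the second delicate step, and I will carry it out componentwise. Integrating the resulting gradient identity gives the stated formula for $\pi$ up to a constant.
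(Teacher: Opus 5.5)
Your route is the paper's. For (a) you use nonnegativity of the dissipation plus Korn's inequality once the dissipation vanishes. For (b) you show that $-{\rm div}(\nabla d\odot\nabla d)+{\rm div}((\nabla\Phi\otimes\nabla\Phi)\ve(d))$ is a gradient by inserting the director and Poisson equations. Your chain rule $\nabla(\nabla\Phi\cdot d)=(\nabla^2\Phi)d+(\nabla d)^{\sf T}\nabla\Phi$ is a compact form of the index permutation the paper uses to cancel the non-gradient pieces, and your bookkeeping gives exactly the stated $\pi$.

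The obstacle you flag is real but is resolved in Appendix~B. There, testing the momentum equation with $v$ produces $\alpha_4\int_\Omega|D(v)|^2$ (from $\alpha_4 D(v):\nabla v$). So the $|D(v)d|^2$ in the printed statement of Proposition~\ref{energy-est} is a misprint, and the paper's appeal to Korn's inequality uses the full term, as you anticipated.

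One step is not justified as written. From $\nabla\mu_k=0$ you only get $\mu_k=C_k(t)$, i.e.\ $c_k=Z_k(t)e^{-z_k\Phi}$. Your claim that mass conservation fixes the $Z_k$ is an injectivity statement for the Poisson--Boltzmann problem, namely for the map $(Z_1,\dots,Z_N)\mapsto(\int_\Omega Z_ke^{-z_k\Phi_Z})_k$ with $\Phi_Z$ depending on all $Z_j$. You assert it without proof. It is true: for two Boltzmann states with the same $d$ and equal masses, multiply $c_k-c_k'$ by the constant $\ln Z_k-\ln Z_k'$, integrate and sum. This gives $0=\sum_k\int_\Omega(\ln c_k-\ln c_k')(c_k-c_k')+\int_\Omega\ve(d)\nabla(\Phi-\Phi')\cdot\nabla(\Phi-\Phi')$, and both terms are nonnegative.

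The detour is unnecessary, though. With $v=0$ and $\nabla\mu_k=0$, the Nernst--Planck equation $\eqref{main-PDE}_1$ gives directly $\partial_tc_k=-v\cdot\nabla c_k+{\rm div}(c_k\cD_k\nabla\mu_k)=0$. Together with $\partial_td=\od=0$ and $v\equiv 0$, this is exactly how the paper concludes that the solution is an equilibrium.
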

\begin{proof}
(a) It follows from \eqref{positivity} and Young's inequality that
$$ 
(\gamma_2 + \alpha_2 + \alpha_3) (\od\cdot D(v)d)
\ge -  \frac{(\gamma_2 + \alpha_2 +\alpha_3)^2}{4\beta } |  \od |^2  -  \beta |D(v)d|^2 .
$$
Setting 
$\delta: = \gamma_1 - {(\gamma_2 + \alpha_2 +\alpha_3)^2}/{(4\beta )}
$
we can conclude that  
$$
\int_{\Omega}\left(\alpha_1(d\cdot D(v)d)^2+(\gamma_2+\alpha_2+\alpha_3)(\od\cdot D(v)d)+\beta |D(v)d|^2+\gamma_1|\od|^2\right)dx \ge \delta \int_\Omega| \od|^2\, dx.
$$
Suppose that $\frac{d}{dt} E(t) = 0$ for $t \in (a,b)$, for some interval $(a,b)\subset (0, T_+(z^0))$.

\smallskip
From  Proposition~\ref{energy-est}, Korn's inequality and $v|_{\pa\Omega}=0$ we infer that
$$  v=0,\quad  \nabla \mu_k =0\quad \text{and}\quad \od =0. $$
This, in turn, implies 
$$(\partial_t c_k, \partial_t v, \partial_t d)= (0,0,0),\quad t\in (a,b),$$
and, hence, we are at equilibrium. This shows that $-E$ is a strict Lyapunov function.

\medskip\noindent 
(b) At equilibrium, 
\begin{equation}
  \label{equilibrium-system}
	\left\{
  \begin{aligned}
    -{\rm div}\, (\ve(d)\nabla \Phi)&=\Sigma_{k=1}^{N} z_k c_k =\rho &&\text{in} && \Omega,\\
       \Phi &= 0 &&\text{on} && \partial \Omega, \\
   \nabla \pi &=- {\rm div}(\nabla d\odot \nabla d)  + {\rm div} \left(\ve(d) (\nabla \Phi\otimes \nabla \Phi) \right)  &&\text{in} && \Omega, \\ 
     \Delta d  + |\nabla d|^2 d &=   -\ve_a P(d) (\nabla \Phi\otimes \nabla\Phi)d &&\text{in} && \Omega,\\
    | d | & = 1 &&\text{in} && \Omega, \\
    \partial_\nu d &=0 &&\text{on} && \partial \Omega .\\
  \end{aligned}
  \right.
\end{equation}

\medskip\noindent
We will show that  the term 
$
- {\rm div}(\nabla d\odot \nabla d)  +  {\rm div} \left(\nabla \Phi\otimes \nabla \Phi)\ve(d)  \right) 
$
in $\eqref{equilibrium-system}_3$  indeed has a gradient structure at equilibrium. 
Let us then first consider the term

\begin{align*}
    {\rm div}\, ((\na\Phi\otimes\na\Phi)\varepsilon(d)).
\end{align*}
Setting $\ve = \ve (d)$ and expanding, we get (using the symmetry of $\nabla \Phi \otimes \nabla \Phi$ and $\ve(d) $)
\begin{equation*}
\begin{aligned}
 {\rm div}\, ((\na\Phi\otimes\na\Phi)\ve ) &=\partial_i(\partial_k \Phi \partial_j \Phi \ve_{ji})e_k \\
 &  = \pa_i(\varepsilon_{ij}\pa_j\Phi\pa_k\Phi)e_k 
 = \pa_i(\varepsilon_{ij}\pa_j\Phi)\pa_k\Phi e_k + \varepsilon_{ij}\pa_j\Phi(\pa_i\pa_k\Phi)e_k.
\end{aligned}
\end{equation*}
We observe that by $\eqref{equilibrium-system}_1$
$$\pa_i(\varepsilon_{ij}\pa_j\Phi)\pa_k\Phi e_k={\rm div}\,(\varepsilon(d)\na\Phi)\na\Phi= -\rho\na\Phi,$$ 
which, in the stationary case, is a gradient. 
Namely, we conclude from $\nabla \mu_k =0$ that 
$$\nabla c_k + z_k c_k \nabla \Phi =0.$$
Summing up yields
$-\rho \nabla \Phi = \nabla (\Sigma_i c_i)$
and hence
$$ \pa_i(\varepsilon_{ij}\pa_j\Phi)\pa_k\Phi e_k = \nabla (\Sigma_i c_i)$$
is a gradient term.

\medskip\noindent
Furthermore, since $\varepsilon_{ij} = \varepsilon_{\perp}\delta_{ij}+\varepsilon_a d_i d_j$, we have
\begin{equation}
\label{e2}
\begin{aligned}
\varepsilon_{ij}\pa_j\Phi(\pa_i\pa_k\Phi)e_k & = \varepsilon_{\perp}\pa_i\Phi(\pa_i\pa_k\Phi)e_k + \varepsilon_a d_i d_j\pa_j\Phi(\pa_i\pa_k\Phi)e_k \\
&= \frac{1}{2}\varepsilon_{\perp}\pa_k|\na\Phi|^2 e_k + \varepsilon_a d_i d_j\pa_j\Phi(\pa_i\pa_k\Phi)e_k  .\\
\end{aligned}
\end{equation}
So, $\varepsilon_{ij}\pa_j\Phi(\pa_i\pa_k\Phi)e_k$ can be decomposed into a gradient term and a nongradient term.

\medskip\noindent
Now, we take a look at the term $-{\rm div}\,(\na d\odot \na d)$. We have
\begin{equation*}
\begin{aligned}
-{\rm div}\,(\na d\odot\na d) = -\pa_i(\pa_i d\cdot\pa_j d)e_j & =-(\pa_i\pa_i d \cdot \pa_j d) e_j - (\pa_id\cdot\pa_i\pa_jd) e_j\\
& =-(\Delta d\cdot \pa_j d)e_j-\frac{1}{2}\pa_j | \nabla d |^2 e_j .\\
\end{aligned}
\end{equation*}
Next, we use the stationary equation for $d$, to calculate
\begin{equation*}
\begin{aligned}
-(\Delta d\cdot \pa_j d) &= \left[|\na d|^2d+\varepsilon_a P(d)(\na\Phi\otimes\na\Phi)d\right]\cdot \pa_j  d\\[2pt]
&=  \left[|\na d|^2d+\varepsilon_a (\na\Phi\otimes\na\Phi)d\right]\cdot \pa_j d \\[2pt]
& = |\na d|^2  d\cdot \pa_j d\ + \varepsilon_a (\na \Phi\otimes\na\Phi)d\cdot\pa_j d  \\[4pt]
& = \varepsilon_a (\na \Phi\otimes\na\Phi)d\cdot\pa_j d .
\end{aligned}
\end{equation*}
Here we used that $P(d) \partial_j d = \partial_j d$ and $d\cdot \partial_j d =0$ as  $ |d |=1$.
We have, by permuting indices,

\begin{equation*}
\begin{aligned}
(\na \Phi\otimes\na\Phi)d\cdot\pa_jd  
& = \pa_j((\na\Phi\otimes\na\Phi)d\cdot d) - \pa_j(\pa_i\Phi\pa_k\Phi d_k)d_i \\
& =\pa_j((\na\Phi\otimes\na\Phi)d\cdot d)-2\pa_j\pa_k\Phi \pa_i\Phi d_i d_k  - \pa_k\Phi\pa_i\Phi d_i \pa_j d_k \\
& =\pa_j((\na\Phi\otimes\na\Phi)d\cdot d)-2\pa_j\pa_k\Phi \pa_i\Phi d_i d_k  - (\na\Phi\otimes\na\Phi)d\cdot\pa_j d . 
\end{aligned}
\end{equation*}
Therefore, 
\be\label{e}
\bal
\varepsilon_a(\na \Phi\otimes\na\Phi)d\cdot\pa_jd\,e_j& =\frac{\varepsilon_a}{2}\pa_j((\na\Phi\otimes\na\Phi)d\cdot d)\,e_j-\varepsilon_a\pa_k\pa_i\Phi \pa_j\Phi d_i d_j \,e_k.
\eal
\ee
We note that the second term on the right hand side of \eqref{e} cancels out with the second term on the right hand side of \eqref{e2}. Thus, we are just left with gradient terms.

\medskip\noindent
It remains to observe that the relation $\nabla \mu_k=0$ implies $\mu_k = \ln c_k  + z_k\Phi = C_k$, where $C_k$ is a constant. 
This readily yields
$$
c_k = e^{C_k} e^{-z_k \Phi} =: Z_k e^{-z_k \Phi}, \quad 1\le k\le N. 
$$
\end{proof}
\goodbreak
\section{Long-time behavior of solutions}
\label{section:long-term}
\noindent
The next theorem   establishes the long-time behavior of solutions to  \eqref{main-PDE-2}.
\begin{theorem}\label{thm:global existence}
Assume \eqref{positivity} and \eqref{mu-condition}. 
Let $(z(\cdot, z^0),\pi)$ be the solution of \eqref{main-PDE-2}, defined on its maximal interval of existence $[0, T_+(z^0)).$ Then
\begin{enumerate}
\item[{\rm (a)}]  
\vspace{2mm}\noindent
The following alternatives hold:

\vspace{2mm}\noindent
Either  $T_+(z^0)=\infty$, that is, $z$ is a global solution, 

\vspace{2mm}\noindent
 or $\displaystyle \lim_{t\to T_+(z^0)}z(t) $ does not exist in $X_{\gamma,\mu}$.

\vspace{1mm}
\item[{\rm (b)}]
Suppose 
$$
\displaystyle \sup_{t\in [\eta,  T_+(z^0))} \|z(t)\|_{X_{\gamma, \overline{\mu}}}<\infty
\quad
\text{for some $\eta \in (0, T_+(z^0))$ and some $\bar{\mu}\in(\mu, 1]$}.
$$
Then the solution $z$ exists globally and ${\rm dist}\,(z(t), \cE)\to 0$ in $X_{\gamma, 1}$ as $t\to \infty$. 
\end{enumerate}
\end{theorem}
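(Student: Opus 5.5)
Part (a) follows the standard argument for quasilinear parabolic problems with maximal regularity; see \cite[Theorem 5.7.1]{PrSi16} and \cite[Theorem 6.1]{DSS23}. Suppose $T_+:=T_+(z^0)<\infty$ and that $z(t)\to z_1$ in $X_{\gamma,\mu}$ as $t\to T_+$. Since $\cM_\mu$ is closed in $X_{\gamma,\mu}$ and $\sB$ is continuous on $X_{\gamma,\mu}$ (Lemma~\ref{elliptic-tilde} together with the embedding $X_{\gamma,\mu}\hookrightarrow C^1\times C^1\times C^2$), we have $z_1\in\cM_\mu$. The set $K=\{z(t):t\in[0,T_+]\}$ (with $z(T_+):=z_1$) is then compact in $\cM_\mu$. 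I would check that the existence time in the proof of Theorem~\ref{thm: main} can be chosen uniformly for initial data in a compact set. This follows from the uniform estimate \eqref{estimate-uniform} in Proposition~\ref{pro: abstract-linear-tilde}, the $T$-independence of $c_2$ in Proposition~\ref{pro: linearized}, and the continuity of the reference solution $z_*$ with respect to $z^0$. Restarting from $z(T_+-\delta)$ with $\delta$ smaller than this uniform time then extends the solution beyond $T_+$, which is a contradiction.

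For part (b), the first step is global existence. Let $\bar\mu>\mu$. The embedding $X_{\gamma,\bar\mu}\hookrightarrow X_{\gamma,\mu}$ is compact because $\Omega$ is bounded and the orders $2\bar\mu-2/p>2\mu-2/p$. Hence the orbit $\{z(t):t\in[\eta,T_+)\}$ is relatively compact in $X_{\gamma,\mu}$. Compactness alone does not give existence of the limit, so I would combine it with the uniform local existence time on compact sets from part (a). Restarting at $t=T_+-\delta$ for small $\delta$ extends the solution, so $T_+=\infty$.

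The second step upgrades relative compactness to $X_{\gamma,1}$. The solution is restarted at each time $t_0\ge\eta$ with initial datum in a compact subset of $X_{\gamma,\mu}$ and a uniform time $\tau$. Maximal regularity on $[t_0,t_0+\tau]$ with uniform constants, combined with the time-weight/smoothing trick $t^k\partial_t^k z\in\EE_{1,\mu}$ of Theorem~\ref{thm: main}, bounds $z(t_0+\tau)$ in $X_{\gamma,1}$, and by a further restart in a slightly better space. I would use the bound of $z$ in $H^1_p((t_0+\tau/2,t_0+\tau);X_0)\cap L_p(\cdot;X_1)$ without weight and interpolate, giving boundedness in $X_{\gamma,\bar\mu'}$ for some $\bar\mu'$ close to $1$. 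Then a bootstrap with $\partial_t z$ bounded in $X_{\gamma,\mu}$ yields relative compactness of the orbit in $X_{\gamma,1}$. Getting compactness exactly at $\mu=1$ is delicate. The cleanest route is continuity of the semiflow from $X_{\gamma,\mu}$ into $X_{\gamma,1}$ at positive times, which follows from the Lipschitz semiflow property together with parabolic regularization. Applying it to a relatively compact set of initial data gives a relatively compact image.

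The final step is convergence to the set of equilibria $\cE$ via LaSalle. The $\omega$-limit set $\omega(z^0)$ is nonempty, compact, connected and invariant in $X_{\gamma,1}$, and $\mathrm{dist}(z(t),\omega(z^0))\to0$. Along the orbit $E$ is nonincreasing by Proposition~\ref{energy-est}. It is bounded below because $c\ln c\ge -1/e$, and positivity and $|d|=1$ are preserved by Theorem~\ref{thm: main}. Hence $E(z(t))\to E_\infty$. Continuity of $E$ on $X_{\gamma,1}$, which requires $c_k$ to stay away from zero, gives $E\equiv E_\infty$ on $\omega(z^0)$. Here I expect the main obstacle: ensuring the $\omega$-limit points still have $c_k>0$ so that $E$ is finite and continuous there, since the lower bound $Ae^{-Bt}c$ degenerates as $t\to\infty$. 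I would instead use mass conservation and the uniform $C^1$ bound on $\Phi$ coming from the orbit bound. By invariance, the solution starting at any $w\in\omega(z^0)$ has constant energy, so Proposition~\ref{pro: Lyapunov-equilibria}(a) shows that $w$ is an equilibrium. If needed, one can note that a limit with $c_k(x_0)=0$ is excluded because the dissipation $\int c_k|\nabla\mu_k|^2$ vanishing forces the Boltzmann form $c_k=Z_ke^{-z_k\Phi}$ with $Z_k$ fixed by the positive total mass. Therefore $\omega(z^0)\subset\cE$ and $\mathrm{dist}(z(t),\cE)\to0$ in $X_{\gamma,1}$.
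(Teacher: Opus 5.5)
Your proposal is correct and follows essentially the same route as the paper. In (a) and in the global-existence step of (b) you combine relative compactness of the orbit in $\cM_\mu$ with a uniform local existence time to restart beyond $T_+(z^0)$. For the convergence you use the paper's device: continuity of $z(s)\mapsto z(s+\delta)$ from a neighborhood of the orbit closure in $\cM_\mu$ into $X_{\gamma,1}$, followed by the $\omega$-limit set argument with the strict Lyapunov functional $-E$. The paper gets this continuity directly by restricting $\EE_{1,\mu}(J_\delta)$ to $[\delta/2,\delta]$, where the time weight is harmless, so your interpolation/bootstrap detour is not needed.

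Your LaSalle step is more careful than the paper's one-line conclusion, since it flags that $\omega$-limit points must have $c_k>0$ before Proposition~\ref{pro: Lyapunov-equilibria} can be invoked. Note, however, that $c\ln c$ is continuous at $c=0$, so continuity of $E$ on $X_{\gamma,1}$ does not itself require positivity.
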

\begin{proof}
(a) 
We will prove the assertion by following the strategy in \cite[Corollary~5.1.2]{PrSi16}, see also the proof of \cite[Theorem 6.1(a)]{DSS24}. 
We argue by contradiction.
Suppose that $T_+(z^0)<\infty$ and $z(\cdot,z^0)$ converges to some $z_1$ in $X_{\gamma,\mu}$ as $t\to T_+(z^0)$.
Lemma~\ref{elliptic-tilde} implies that $\sB(z_1)z_1=0$, showing that $z_1\in \cM_\mu$. 

\smallskip
Therefore, the orbit $\cV:=\{z(t) : 0\le t <  T_+(z^0) \}$ is relatively compact in  $\cM_\mu$. 
It  follows from the property that \eqref{main-PDE-2} generates a  local semiflow on $\cM_\mu$  and a compactness argument that there exists 
a number $\delta>0$ such that  for each $s \in [0 , T_+(z^0))$, the system~\eqref{nonlinear abstract} with initial value 
$z(s)\in \cM_\mu$ has a unique solution in  $\EE_{1,\mu}(J_\delta)$.

\smallskip
Consequently, fixing $s_0\in (T_+(z^0)-\delta , T_+(z^0))$,  the system~\eqref{nonlinear abstract} with initial value $z(s_0)$ has a solution 
$\tilde z \in \EE_{1,1}(J_{\delta})$, which, by uniqueness, coincides with $z(s_0+ \cdot\, , z^0)$ on $[s_0, T_+(z^0))$.  
Hence, the solution  $z(\cdot, z^0)$ of \eqref{nonlinear abstract} can be extended beyond $T_+(z^0)$,
a contradiction to the maximality of $T_+(z^0).$

\medskip\noindent
(b)
We will prove the assertion by following the strategy in \cite[Section 5.7]{PrSi16}.
The proof is similar to the one of \cite[Theorem 6.1(b)]{DSS24}.  For the reader's convenience we include the details.
By Theorem~\ref{thm: main}, the system \eqref{nonlinear abstract} defines a local semiflow on $\cM_\mu $.
From the assumption   and the compact embedding
$$
X_{\gamma, \overline{\mu}} \hookrightarrow X_{\gamma, \mu},
$$
we infer that the orbit $\cV:=\{z(t) : 0\le t < T^+(z^0)\}$ is relatively compact in $\cM_\mu $.
Let $\overline{\cV}$ denote the closure of $\cV$ in $X_{\gamma,\mu}$.

\smallskip
It follows from a similar argument as in part (a) that
there exist a number $\delta>0$ and an open neighborhood  $\cU$ of $\overline{\cV}$ in $\cM_\mu $ such that for every $\tilde{z}_0\in \cU$,
 \eqref{nonlinear abstract} admits a unique solution $\tilde{z}\in \EE_{1,\mu}(J_{\delta})$. 
Moreover, the solution map $G_1:\cU\to \EE_{1,\mu}(J_{\delta})$ is continuous.
This implies  that, for any $t\in [0, T_+(z^0))$, the solution of \eqref{nonlinear abstract} with initial condition $z(t)$ exists on the  interval $[t,t+\delta]$, 
which further shows that $T_+(z^0)=\infty$.
Hence, the solution to \eqref{nonlinear abstract} is global.

As in the proof of Theorem~\ref{thm: main}, one shows that \eqref{nonlinear abstract} also defines a local semiflow on $\cM_1$, equipped with the metric induced by $X_{\gamma,1}$.
The inequality
\begin{align*}
\|z(\delta)\|_{X_{\gamma,1}} & \leq  \|z\|_{C([\delta/2,\delta]; X_{\gamma,1})} \leq C(\delta)
\|z\|_{\EE_{1,1}([\delta/2,\delta])} 
  \leq C(\delta) (\delta/2)^{ \mu -1} \|z\|_{\EE_{1,\mu}(J_{\delta})}   
\end{align*}
implies that the mapping $G_2: \EE_{1,\mu}(J_{\delta}) \to X_{\gamma,1}: [z \mapsto z(\delta)]$  is continuous. 
Consequently,  the composition map 
$$
G=G_2\circ G_1: \cU\to X_{\gamma,1}: \, z  \mapsto G_1(z)(\delta)$$
 is continuous as well.
We thus infer that the orbit $\{z(t)\}_{t\geq \delta}$ is relatively compact in $\cM_1$, as the continuous image  of a relatively compact set is again relatively compact.
Recall that the definition of $\omega$-limit set for \eqref{nonlinear abstract} is given by 
$$
\omega(z^0):=\{w\in X_{\gamma, 1}:\exists t_n\to \infty \text{ s.t. }\|z(t_n)-w\|_{X_{\gamma, 1}}\to 0 \text{ as }n\to \infty\}. 
$$
By \cite[Theorem~17.2]{Ama90}, $\omega(z^0)$ is nonempty, compact, connected in $\cM_1$, and 
\begin{equation}
\label{converg omega limit set}
\lim_{t\to \infty}{\rm dist}_{X_{\gamma, 1}}(z(t), \omega(z^0))=0 .
\end{equation}
It is shown in Proposition~\ref{pro: Lyapunov-equilibria} that $-E$ is a strict Lyapunov functional for \eqref{nonlinear abstract}.
Therefore,  
$\omega(z^0)\subset \cE$. Combining with \eqref{converg omega limit set}, this implies  $\lim_{t\to \infty}{\rm dist}_{X_{\gamma, 1}}(z(t), \cE)= 0.$
\end{proof}
\appendix

\section{Auxiliary results}
\label{Appendix A}
\noindent
 In this section, we  collect and prove results that are more of a technical nature.
 We start with studying existence and regularity of solutions for the time-dependent Poisson system.
 
\medskip\noindent
The following result will play a key role in establishing regularity properties of the boundary operator $\sB(z)$.
Before stating the result, we introduce some useful notation.
\begin{equation}
\label{Ed-Ec}
\begin{aligned}
\EE^{\bc}_{1,\mu}(J_T)&= H^1_{p,\mu}(J_T; L_p(\Omega; \R^N))\cap L_{p,\mu}(J_T; H^2_p(\Omega; \R^N)), \\
\EE^d_{1,\mu}(J_T)  &=H^1_{p,\mu}(J_T; H^1_p(\Omega; \R^n))\cap L_{p,\mu}(J_T; H^3_p(\Omega; \R^n)).
\end{aligned}
\end{equation}
\begin{proposition}
\label{pro: Phi-time}
Assume  that \eqref{permittivity} and \eqref{mu-condition} hold.
Then the time-dependent
elliptic boundary value problem 
    \begin{equation}
    \label{elliptic-phi-time}
    \left\{
    \begin{aligned}
    -{\rm div}\, (\ve( d(t))\nabla \Phi)&=\sum_{k=1}^N z_k c_k(t) &&\text{in} &&\Omega, \quad t\in [0,T],\\
           \Phi &= 0 &&\text{on} &&\partial \Omega, \\
    \end{aligned}
    \right.
    \end{equation}
admits for each $(\bc, d)\in \EE^{\bc}_{1,\mu}(J_T)\times \EE^d_{1,\mu}(J_T)$
 a unique solution $$\Phi =\Phi(\bc, d) \in W^{1/2-1/2p}_{p,\mu}(J_T; H^2_p(\Omega)).$$
Moreover, the mapping
\begin{equation}
\label{Phi-time-analytic}
[ (\bc , d) \mapsto \Phi (\bc, d)]: \EE^{\bc} _{1,\mu}(J_T)\times \EE^d_{1,\mu}(J_T) \to  W^{1/2-1/2p}_{p,\mu}(J_T; H^2_p(\Omega))
\end{equation}
is real analytic.  
\end{proposition}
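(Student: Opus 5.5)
The plan is to write the solution as a composition of the pointwise-in-time solution operator from Lemma~\ref{elliptic-tilde} with the trajectory $t\mapsto(\bc(t),d(t))$, and to transfer time regularity through this composition. Set $L(d):=-{\rm div}\,(\ve(d)\nabla\,\cdot\,)\in\cL(H^2_{p,D}(\Omega),L_p(\Omega))$. For each $t$ we put $\Phi(t):=L(d(t))^{-1}\rho(t)$ with $\rho:=\sum_k z_kc_k$. Lemma~\ref{elliptic-tilde} gives existence and uniqueness for each fixed $t$, since by \eqref{Ws-BUC} and \eqref{mu-condition} we have $d\in BU\!C(\bar J_T;W^{1+2\mu-2/p}_p(\Omega))\hookrightarrow BU\!C(\bar J_T;C^2(\bar\Omega))$. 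Uniqueness in the class $W^{1/2-1/2p}_{p,\mu}(J_T;H^2_p(\Omega))$ then follows at once from the pointwise uniqueness.

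For the regularity, I would first establish two mixed-derivative embeddings.
\begin{itemize}
\item $\EE^{\bc}_{1,\mu}(J_T)\hookrightarrow W^{s}_{p,\mu}(J_T;L_p(\Omega))$ for $s\in[0,1]$, which holds in particular for $s=1/2-1/2p$.
\item $\EE^d_{1,\mu}(J_T)\hookrightarrow W^{s}_{p,\mu}(J_T;H^{3-2s}_p(\Omega))$ for $s\in[0,1]$. Taking $s=1/2-1/2p$ gives spatial regularity $H^{2+1/p}_p(\Omega)\hookrightarrow C^1(\bar\Omega)$, because $1/p<1-n/p$ when $p>n+2$.
\end{itemize}
These are the standard weighted mixed-derivative results (cf.~\cite{MeSc12}, \cite[Prop.~3.1]{PrSi04}).

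Next I would show that $\Lambda:=[t\mapsto L(d(t))^{-1}]$ acts as a multiplier. The map $C^1(\bar\Omega)\ni d\mapsto L(d)^{-1}\in\cL(L_p,H^2_{p})$ is real analytic, hence locally Lipschitz. Moreover, the set $\{d(t)\}$ is compact in $C^1$, and the $\ve(d)$ are uniformly elliptic. From these facts I expect
\[
\|L(d(t))^{-1}-L(d(\tau))^{-1}\|_{\cL(L_p,H^2_p)}\le C\|d(t)-d(\tau)\|_{C^1(\bar\Omega)}.
\]
I would then split the difference as
\[
\Phi(t)-\Phi(\tau)=L(d(t))^{-1}(\rho(t)-\rho(\tau))+\big(L(d(t))^{-1}-L(d(\tau))^{-1}\big)\rho(\tau).
\]
Inserting this into the intrinsic seminorm \eqref{seminorm}, the first term is controlled by $\sup_t\|L(d(t))^{-1}\|\,[\rho]_{W^s_{p,\mu}(L_p)}$. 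The second term is bounded by $C\sup_\tau\|\rho(\tau)\|_{L_p}\,[d]_{W^s_{p,\mu}(C^1)}$, using $\rho\in BU\!C(\bar J_T;L_p)$ by \eqref{Ws-BUC}. This step is the main obstacle: the time weight $\tau^{p(1-\mu)}$ must be placed on the factor carrying the seminorm. Since the weight in \eqref{seminorm} sits on the earlier time $\tau$, I would write the second term with $\rho(t)$ in place of $\rho(\tau)$, using a symmetric version of the splitting, and use $\rho\in BU\!C$ for the sup. The $L_{p,\mu}$ part of the norm is immediate.

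For analyticity, I would avoid the pointwise argument and argue via the implicit function theorem. Define
\[
\mathcal G(\bc,d,\Phi):=\big(t\mapsto L(d(t))\Phi(t)-\rho(t)\big).
\]
This map goes from $\EE^{\bc}_{1,\mu}\times\EE^d_{1,\mu}\times\mathbb W$ into $W^{s}_{p,\mu}(J_T;L_p(\Omega))$, where $\mathbb W:=W^{s}_{p,\mu}(J_T;H^2_{p,D}(\Omega))$. The map $L(d)\Phi=-\ve_\perp\Delta\Phi-\ve_a{\rm div}((d\otimes d)\nabla\Phi)$ is polynomial (trilinear plus linear) in $(d,\Phi)$. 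It is therefore real analytic once the trilinear product is bounded, which follows from the same multiplier estimate as above, now using $d\in W^s_{p,\mu}(J_T;C^1)\cap BU\!C(\bar J_T;C^1)$ as an algebra-type multiplier on $W^s_{p,\mu}(J_T;H^1_p)$. The partial derivative $\partial_\Phi\mathcal G=L(d(\cdot))$ is an isomorphism from $\mathbb W$ onto $W^s_{p,\mu}(J_T;L_p)$ by the multiplier step applied to $L(d(\cdot))^{-1}$. The analytic implicit function theorem then shows that $(\bc,d)\mapsto\Phi(\bc,d)$ is real analytic, which gives \eqref{Phi-time-analytic}.
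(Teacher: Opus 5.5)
Your proposal is correct and follows essentially the paper's route. It has the same ingredients: pointwise solvability from Lemma~\ref{elliptic-tilde}, and a uniform bound on $L(d(t))^{-1}$ together with the estimate $\|L(d(t))^{-1}-L(d(\tau))^{-1}\|_{\cL(L_p(\Omega),H^2_p(\Omega))}\le C\|d(t)-d(\tau)\|_{C^1(\bar\Omega)}$, which the paper gets directly from the resolvent identity rather than from analyticity plus compactness. It then uses the same two-term splitting in the intrinsic seminorm, and derives analyticity from $d\mapsto L(d)$ being an analytic family of isomorphisms between the $W^{1/2-1/2p}_{p,\mu}$-spaces; your analytic implicit function argument is only a repackaging of the paper's appeal to analyticity of inversion.

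The time weight you single out as the main obstacle is harmless. Once $\sup_{\tau}\|\rho(\tau)\|_{L_p(\Omega)}$ and the uniform bound on the inverses are pulled out, $\tau^{p(1-\mu)}$ simply stays with the $d$-increment (respectively the $\rho$-increment) in either version of the splitting, so no symmetrization is needed.
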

\begin{proof}
It should be emphasized that the $W^{1/2-1/2p}_p$-time regularity of  $\Phi$ 
is not obvious, as $t$ only appears as a parameter in the elliptic problem \eqref{elliptic-phi-time}.

\smallskip\noindent
Let $(\bc,d)\in \EE^{\bc}_{1,\mu}(J_T)\times \EE^d_{1,\mu}(J_T)$ be given.
For $\bc\in \EE^{\bc}_{1,\mu}(J_T)$ one immediately verifies that
$$f:=\ell(c):= \sum_{k=1}^N z_k c_{k}  \in W^s_{p,\mu}(J_T; L_p(\Omega)),\quad s=1/2 -1/2p.$$
(In fact,  $f\in H^1_{p,\mu}(J_T; L_p(\Omega))).$
It follows from~\cite[Proposition 3.2]{MeSc12} that
$$
\EE^d_{1,\mu}(J_T)
\hookrightarrow  H^\sigma_{p,\mu} (J_T; H^{1+2(1-\sigma)}_p(\Omega; \R^n))
 $$
 for $\sigma\in [0,1]$.
This implies by Sobolev embedding and \cite[equation (2.1)]{MeSc12} 
\begin{equation*}
\EE_{1,\mu}(J_T)  \hookrightarrow  W^{\eta}_{p,\mu}(J_T; C^1(\bar\Omega; \R^n))\quad\text{for}\quad \eta< 1- n/2p.
\end{equation*}
Hence
\begin{equation}
\label{embedded-W-eta}
\EE^d_{1,\mu}(J_T)\hookrightarrow  W^{s_1}_{p,\mu}(J_T; C^1(\bar\Omega; \R^n))\hookrightarrow W^s_{p,\mu}(J_T; C^1(\bar\Omega;\R^n)),
\quad  s<s_1< 1- n/2p,
\end{equation}
where $s=1/2-1/2p$. This is feasible as $p>(n+2)$ by \eqref{mu-condition}.

We next observe that the mapping
\begin{equation*}
\label{d times d-analytic}
\begin{aligned}
&[d\mapsto \ve(d)]: 
 W^s_{p,\mu}(J_T; C^1(\bar\Omega;\R^n)) \to W^s_{p,\mu}(J_T ; C^1(\bar\Omega; \R^{n\times n})) \\
\end{aligned}
\end{equation*}
is real-analytic, as $W^s_{p,\mu}(J_T; C^1(\bar\Omega))$ is a continuous multiplication algebra.

\smallskip\noindent
For notational convenience, we now set
$$
a(t)= \ve (d(t)), \quad L(t)= {\rm div} (a(t)\nabla\cdot\ ), \quad t\in \bar J_T.
$$
According to Lemma~\ref{elliptic-tilde},  the elliptic problem \eqref{elliptic-phi-time} has for each $t\in \bar J_T$ a unique solution $\Phi(\bc(t), d(t) )$ which satisfies
\begin{equation*}
\label{Phi-time-parameter}
\end{equation*}
where $H^2_{p,D}(\Omega)= \{u\in H^2_p(\Omega) : {\rm tr}_{\pa\Omega} u=0\}$.
We aim to show that 
$[t\mapsto \Phi(\bc(t), d(t))]$ enjoys  $W^s_{p,\mu}$-time regularity. 
Using that 
$$
[(a,u) \mapsto au]: W^s_{p,\mu}(J_T; C^1(\bar\Omega; \R^{n\times n}))\times W^s_{p,\mu}(J_T; H^1_p(\Omega; \R^n))
		 \to W^s_{p,\mu}(J_T; H^1_p(\Omega; \R^n)) 
$$
is bilinear and continuous (and, hence, also real analytic), we obtain
\begin{equation*}
L(\cdot)\in \cL(W^s_{p,\mu}(J_T; H^2_p(\Omega)), W^s_{p,\mu}(J_T; L_p(\Omega))).
\end{equation*}
Moreover, 
$$
[t\mapsto L(t)]\in C(\bar J_T; \cL(H^2_p(\Omega),  L_p(\Omega))).
$$
Hence, by compactness and continuity of inversion, there exists a number $M>0$ such that 
\begin{equation}
\label{L(t)-inverse-bounded}
\| L(t)^{-1}\|_{\cL(L_p(\Omega), H^2_p(\Omega))}\le M\quad \text{for all } t\in \bar J_T.
\end{equation}

\smallskip\noindent
So far, we know that
$$ 
L(t)\in {\rm Isom}(H^2_{p,D}(\Omega), L_p(\Omega)), \quad t\in \bar J_T.
$$
In a next step, we show that
$$
L(\cdot) \in {\rm Isom}(W^s_{p,\mu} (J_T; H^2_{p,D} (\Omega)), W^s_{p,\mu}(J_T; L_p(\Omega))).
$$
For this, we observe that 
$$
L(t)^{-1} - L(\tau)^{-1} = - L(t)^{-1} (L(t) - L(\tau)) L(\tau)^{-1},\quad \tau, t \in J_T.
$$
For $\varphi \in L_p(\Omega)$, it then follows from~\eqref{L(t)-inverse-bounded} that
\begin{equation*}
\begin{aligned}
\| (L(t)^{-1} - L(\tau)^{-1})\varphi \|_{H^2_p(\Omega)} 
& \le M \| {\rm div}\, \big((a(t)-a(\tau))\nabla L(\tau)^{-1}\varphi \big) \|_{L_p(\Omega)} \\
&  \le CM \| \big((a(t)-a(\tau))\nabla L(\tau)^{-1}\varphi \big) \|_{H^1_p(\Omega)} \\
& \le CM^2 \| a(t) - a(\tau) \|_{C^1(\bar\Omega)} \| \varphi \|_{L_p(\Omega)}. 
\end{aligned}
\end{equation*}
Therefore,
\begin{equation}
\label{important-estimate}
\| L(t)^{-1} - L(\tau)^{-1} \|_{\cL (L_p(\Omega), H^2_p(\Omega))}\ \le CM^2 \| a(t) - a(\tau) \|_{C^1(\bar\Omega)},\quad \tau, t\in J_T.
\end{equation}

\smallskip\noindent
Suppose $f\in W^s_{p,\mu}(J_T; L_p(\Omega))$ and let $\Phi(t) := L(t)^{-1}f(t)$ for $t\in \bar J_T$. By \eqref{important-estimate}
\begin{equation*}
\begin{aligned}
[\Phi ]^p_{W^s_{p,\mu}(J_T; H^2_p(\Omega))}
&=\int^T_0 \int_0^t \tau^{(1-\mu)p} \frac{ || L(t)^{-1} f(t)- L(\tau)^{-1} f(\tau) \|^p_{H^2_p(\Omega)}} {(t-\tau)^{sp +1}}\, d\tau\, dt  \\[4pt] 
& \le (CM^2)^p  \sup_{t\in \bar J_T}\|f\|^p_{L_p(\Omega)}  [a]^p_{W^s_{p,\mu}(J_T; C^1(\bar\Omega))}  \\[4pt] 
&\quad + \sup_{\tau\in \bar J_T} \| L(\tau)^{-1}\|^p_{\cL(L_p(\Omega), H^2_p(\Omega))}  [f]^p_{W^s_{p,\mu}(J; L_p(\Omega))}  \\
& \le K\Big ([a]^p_{W^s_{p,\mu}(J_T; C^1(\bar\Omega))} +1 \Big) \| f \|^p_{W^s_{p,\mu}(J_T; L_p(\Omega))}
\end{aligned}
\end{equation*}
with a universal constant $K$. Moreover,
\begin{equation*}
\begin{aligned}
\| \Phi \|_{L_{p,\mu}(J_T; H^2_p(\Omega))} \le M \| f \|_{L_{p,\mu}(J_T; L_p(\Omega))}.
\end{aligned}
\end{equation*}

We have shown that the linear problem
$L(d) \Phi = f$ 
has for each given  $d\in \EE^d_{1,\mu}(J_T)$  and each
$f\in W^s_{p,\mu}(J_T; L_p(\Omega))$ a unique solution $\Phi\in W^s_{p,\mu}(J_T; H^2_{p,D}(\Omega))$, 
where $L(d)= {\rm div}(\eps(d)\nabla \,\cdot\, )$.
Hence,
$$
L(d)\in {\rm Isom} (W^s_{p,\mu}(J_T; H^2_{p,D}(\Omega)), W^s_{p,\mu}(J_T; L_p(\Omega)))$$
and
$$
\Phi=\Phi(\bc, d)=L( d)^{-1} \ell (\bc) \in W^s_{p,\mu}(J_T; H^2_p(\Omega)).$$ 
Since the mappings
\begin{equation*}
\begin{aligned}
&[d \mapsto L(d)]: \EE^d_{1,\mu}(J_T) \to 
 \cL (W^{s}_{p,\mu}(J_T; H^2_{p,D} (\Omega)), W^{s}_{p,\mu}(J_T; L_p(\Omega))),\\
&[\bc \mapsto \ell(\bc)]: \EE^{\bc}_{1,\mu}(J_T) \to  W^{s}_{p,\mu}(J_T; L_p(\Omega))
\end{aligned}
\end{equation*}
are real analytic
and  inversion is so as well, we  obtain \eqref{Phi-time-analytic}.  
\end{proof}
\begin{remark}
\label{rem: phi-analytic}
An inspection of the proof of Proposition~\ref{pro: Phi-time} reveals that we have in fact shown the following results: suppose  $I\subset \R$ is a compact interval. Then
\begin{itemize}
\item[(a)]
the elliptic problem 
\begin{equation*}
    \left\{
    \begin{aligned}
    -{\rm div}\, (a(\cdot) \nabla \Phi)&=f(t) &&\text{in} &&\Omega, \quad t\in I,\\
           \Phi &= 0 &&\text{on} &&\partial \Omega, \\
    \end{aligned}
    \right.
    \end{equation*}
has for each $a\in W^s_{p,\nu}(I; C^1(\bar\Omega; \R^{n\times n}))$ and $f\in W^s_{p,\nu}(I; L_p(\Omega))$ a unique solution
$$\Phi \in W^s_{p,\nu}(I; H^2_p(\Omega)),$$
provided  $ s\in (1- \nu + 1/p ,1)$.
This holds in particular if  $s=1/2-1/2p$,   $\nu =1$ or  $\nu=\mu$ with $\mu$ satisfying \eqref{mu-condition}. 
\vspace{2mm}
\item[(b)]
The mapping  $[(\bc, d)\mapsto \Phi(\bc , d)],$
\begin{equation*}
\begin{aligned}
 & W^s_{p,\nu}(J_T; L_p(\Omega; \R^N)) \times W^s_{p,\nu}( J_T; C^1(\bar\Omega; \R^n))\to  W^{s}_{p,\nu}(J_T; H^2_p(\Omega)),
\end{aligned} 
\end{equation*}
is real analytic, provided $s\in (1-\nu +1/p, 1)$. 
\end{itemize}
\end{remark}
\begin{proposition}
\label{app: pro: ABF-smooth}
Suppose  that \eqref{positivity}  and \eqref{mu-condition} hold and let
$(\cA, \cB, \sF)$ be as in \eqref{A(z)}, \eqref{F(z)}, ~\eqref{AB}.
 Then
$$
\cA,\sF\in C^\omega(\EE_{1,\mu}(J_T), \EE_{0,\mu}(J_T)),\quad \cB\in C^\omega(\EE_{1,\mu}(J_T), \FF_{\mu}(J_T)),
$$
and
$$
\cA'(z_*)z=\sA(z_*)z+[\sA'(z_*)z]z_*,\quad \cB'(z_*)z=\sB(z_*)z+[\sB'(z_*)z]z_*,
$$
where $z,z_*\in \EE_{1,\mu}(J_T)$.
\end{proposition}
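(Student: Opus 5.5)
The plan is to write each of $\cA$, $\sF$, $\cB$ as a composition of bounded multilinear maps, analytic substitution operators of the form $d\mapsto \ve(d)$, $d\mapsto P(d)$, $d\mapsto d\otimes d$, and the analytic solution map $(\bc,d)\mapsto\Phi(\bc,d)$ of Proposition~\ref{pro: Phi-time}. Compositions of real analytic maps are real analytic, so analyticity follows once each piece is shown to act between the right spaces. The derivative formulas then follow from the product rule, because $\cA(z)=\sA(z)z$ and $\cB(z)=\sB(z)z$ are bilinear in the pair $(\sA(z),z)$, resp. $(\sB(z),z)$.

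\emph{Step 1 (coefficient spaces).} By \eqref{mu-condition} and the trace embedding, $\EE_{1,\mu}(J_T)\hookrightarrow BU\!C(\bar J_T;X_{\gamma,\mu})$. Combined with $W^{2\mu-2/p+j}_p\hookrightarrow C^{1+j}(\bar\Omega)$, this gives $\bc,v\in BU\!C(\bar J_T;C^1(\bar\Omega))$ and $d\in BU\!C(\bar J_T;C^2(\bar\Omega))$. Polynomial maps such as $d\mapsto P(d)$ and $d\mapsto \ve(d)$ are then analytic into $BU\!C(\bar J_T;C^1)$, since this space is a multiplication algebra.

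\emph{Step 2 ($\cA$ and $\sF$).} Every entry of $\sA(z)z$ in \eqref{A(z)} is a product of a coefficient that is polynomial in $d$ (or, for $R_1R_0$, in $d,\nabla d$) with $\nabla^2 v$, $\nabla^3 d$ or $\nabla^2 c$. These highest derivatives lie in $L_{p,\mu}(J_T;L_p)$, respectively $L_{p,\mu}(J_T;H^1_p)$ for the $d$-component, where $(1/\gamma_1)\Delta d\in L_{p,\mu}(H^1_p)$. Multiplication by $BU\!C(C^1)$ coefficients preserves $L_{p,\mu}(L_p)$ and $L_{p,\mu}(H^1_p)$, and $\PP_H$ is bounded on $L_p$, so $\cA$ is a continuous polynomial map into $\EE_{0,\mu}(J_T)$ and hence analytic. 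For $\sF$, the terms in $F_v$ and $F_d$ involve only $\nabla^2 d$, $\nabla d$, $\nabla v$ and $v$, and these are handled the same way. The exception is the third component $F_d/\gamma_1$, which must lie in $L_{p,\mu}(H^1_p)$; this requires $\nabla(|\nabla d|^2 d)$, which is fine, and $\nabla\bigl(P(d)(\nabla\Phi\otimes\nabla\Phi)d\bigr)$. For the $\Phi$-terms I will use Proposition~\ref{pro: Phi-time}, which gives $\Phi\in W^s_{p,\mu}(J_T;H^2_p)\hookrightarrow L_{p,\mu}(J_T;H^2_p)$ analytically. The claim also needs $\nabla\Phi\in L_\infty(J_T;C(\bar\Omega))$. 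To get it I take the pointwise-in-time elliptic solution $\Phi(t)=L(d(t))^{-1}\ell(\bc(t))$ with $\ell(\bc)\in BU\!C(\bar J_T;L_p)$; analyticity of inversion as in Lemma~\ref{elliptic-tilde} then gives $\Phi\in BU\!C(\bar J_T;H^2_p)\hookrightarrow BU\!C(\bar J_T;C^1(\bar\Omega))$, since $p>n$. So $\nabla^2\Phi\cdot\nabla\Phi\in L_{p,\mu}(L_p)$, and the terms ${\rm div}((\nabla\Phi\otimes\nabla\Phi)\ve(d))$ and $F_k={\rm div}(z_kc_k\cD_k\nabla\Phi)-v\cdot\nabla c_k$ are analytic into $L_{p,\mu}(L_p)$.

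\emph{Step 3 ($\cB$), the main obstacle.} We have $\cB(z)_k=\nu\cdot{\rm tr}(\cD_k\nabla c_k)+{\rm tr}(b_k(z))\,{\rm tr}(c_k)$ with $b_k=\nu\cdot z_k\cD_k\nabla\Phi$. The first term is linear and bounded into $\FF_\mu$. For the second, Proposition~\ref{pro: Phi-time} gives $\nabla\Phi\in W^s_{p,\mu}(J_T;H^1_p)\cap L_{p,\mu}$, so the trace $b_k$ lies in $W^s_{p,\mu}(J_T;W^{1-1/p}_p(\partial\Omega))$ with $s=1/2-1/2p$, analytically in $(\bc,d)$. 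It remains to show that multiplication $W^s_{p,\mu}(J_T;W^{1-1/p}_p(\partial\Omega))\times\FF_\mu\to\FF_\mu$ is bounded. Since $p>n+2$, $W^{1-1/p}_p(\partial\Omega)$ is an algebra embedding into $C(\partial\Omega)$. The time part is the delicate point: it is $W^s_{p,\mu}$ times $W^s_{p,\mu}$, and both factors lie in $BU\!C$ in time by \eqref{Ws-BUC}, because $s>1-\mu+1/p$ under \eqref{mu-condition}. Splitting $u(t)w(t)-u(\tau)w(\tau)$ in the seminorm \eqref{seminorm} in the usual way then bounds the product. I expect checking this multiplier estimate carefully in the weighted intrinsic norm to be the principal technical work. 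Once it holds, $\cB$ is a bilinear composition of analytic maps, and the derivative identities follow from the product rule.
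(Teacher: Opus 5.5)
Your proposal is correct and follows essentially the same route as the paper. For $\cA$ and $\sF$ you use polynomial coefficient maps built on $\EE_{1,\mu}(J_T)\hookrightarrow BU\!C(\bar J_T;X_{\gamma,\mu})$, for the boundary term in $\cB$ you use Proposition~\ref{pro: Phi-time} together with a multiplier property, and the derivative formulas come from the product rule; the only differences are cosmetic: you obtain $\Phi\in BU\!C(\bar J_T;H^2_p(\Omega))$ by pointwise-in-time inversion instead of from \eqref{Phi-time-analytic} and \eqref{Ws-BUC}, and you verify by hand the product estimate that the paper gets by citing that $\FF^1_\mu(J_T)$ is a multiplication algebra.
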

\begin{proof}
We first consider the boundary operators $\cB$.
Let 
$$
\EE^1_{1,\mu}(J_T)=H^1_{p,\mu}(J_T; L_p(\Omega))\cap L_{p,\mu}(J_T; H^2_p(\Omega)).
$$
For $(\bc, d)\in \EE^{\bc}_{1,\mu}(J_T)\times \EE^d_{1,\mu}(J_T)$, see \eqref{Ed-Ec}, 
we have by the definition of $B_k$, see~\eqref {BCk},
$$ 
B_k(\textbf{c}, d)u = \nu \cdot {\rm tr}_{\partial\Omega} (\cD_k \nabla u) + \nu\cdot {\rm tr}_{\partial\Omega} (z_k\cD_k \nabla \Phi (\textbf{c}, d )) u.
$$
Proposition~\ref{pro: Phi-time} and trace theory imply
$$
\nu\cdot {\rm tr}_{\partial \Omega} (z_k \cD_k \nabla \Phi(\bc, d)) \in W^{1/2-1/2p}_{p,\mu}(J_T;  W^{1-1/p}_p(\partial\Omega)).
$$
It follows from the embedding
$$
W^{1/2-1/2p}_{p,\mu}(J_T;  W^{1-1/p}_p(\partial\Omega))\hookrightarrow \FF^1_\mu(J_T)
$$
and  the fact that $\FF^1_\mu(J_T)$ is a continuous multiplication algebra that the mapping
$$
 [u \mapsto \nu\cdot {\rm tr}_{\partial\Omega} (z_k\cD_k \nabla \Phi (\textbf{c}, d )) u]: \EE^1_{1,\mu}(J_T) \to \FF^1_\mu(J_T)
$$
is linear and bounded (and, thus, real analytic).
Moreover, it follows from  \cite{MeSc12}, Lemma 3.4 and Theorem  4.5, that
$$
 [u\mapsto  \nu \cdot {\rm tr}_{\partial\Omega} (\cD_k \nabla u)]: \EE^1_{1,\mu}(J_T) \to \FF^1_\mu(J_T)
$$ 
is linear and bounded as well. Hence, we can conclude from this and Proposition~\ref{pro: Phi-time} that
\begin{equation*}
[(\bc, d) \mapsto B_k(\bc, d)]: \EE^{\bc}_{1,\mu}(J_T)\times \EE^d_{1,\mu}(J_T)  \to \cL(\EE^1_{1,\mu }(J_T), \FF^1_\mu(J_T))
\end{equation*}
is real analytic. We then infer that
\begin{equation*}
\Big[(\bc, d) \mapsto  B_c(\bc,d) \bc =(B_1 (\bc, d)c_1, \ldots, B_N (\bc, d)c_N ) :
 \EE^{\bc}_{1,\mu}(J_T)\times \EE^d_{1,\mu}(J_T)  \to \FF_{\mu}(J_T)\Big]
\end{equation*}
 is real analytic as well. Hence, with the notational convention $\cB(z):= \sB(z)z= B_c(\bc, d)\bc$ the assertion of the proposition concerning $\cB(z)$ follows.

\bigskip\noindent
For further analysis, we recall that with \eqref{mu-condition} and~\eqref{interpolation},
\begin{equation}
\label{embedding-time}
\begin{aligned}
\EE_{1,\mu}(J_T) &\hookrightarrow BU\!C(\bar J_T;X_{\gamma,\mu}) \\
 & \hookrightarrow BU\!C(\bar J_T; C^1(\bar\Omega; \R^N))\times BU\!C(J_T; C^2(\bar\Omega; \R^n))\times BU\!C(J_T;  C^1(\bar\Omega; \R^n))).
\end{aligned} 
\end{equation}
Moreover, we observe that the multiplication mappings 
\begin{equation}
\label{bilinear}
\begin{aligned}
&[(u,w) \mapsto uw]: BU\!C(\bar J_T; C^j(\bar\Omega))\times BU\!C(J_T; H^j_{p} (\Omega))\to  BU\!C(J_T; H^j_p (\Omega)),\\ 
& [(u,w) \mapsto uw]: BU\!C(\bar J_T; C^j(\bar\Omega)) \times  BU\!C(\bar J_T; C^j(\bar\Omega)) \to  BU\!C(\bar J_T; C^j(\bar\Omega)), \\
&  [(u,w) \mapsto uw]: BU\!C(\bar J_T; H^1_p(\Omega)) \times  BU\!C(\bar J_T; H^1_p(\Omega)) \to  BU\!C(\bar J_T; H^1_p(\Omega)), 
\end{aligned}
\end{equation}
$j=0,1$, are bilinear and continuous and, hence, analytic.
Here we used that $p>n$, so that $H^1_p(\Omega)$ is a continuous multiplicative algebra.

We now consider the mapping $[z\mapsto \sA(z)]$.
Employing  \eqref{Av}-\eqref{R1} and \eqref{bilinear}, one readily verifies that 
 the mappings
\begin{equation*}
\begin{aligned} 
  [d\mapsto A_v(d)] & :  BU\!C(J_T; C(\bar\Omega;\R^n))\to \cL(\EE^v_{1,\mu}(J_T), L_{p,\mu}(J_T; L_p(\Omega; \R^n))),\\
  [d\mapsto R_0(d)] &: BU\!C(J_T; C^1(\bar\Omega;\R^n))\to \cL(\EE^v_{1,\mu}(J_T), L_{p,\mu}(J_T; H^1_p(\Omega; \R^n))), \\
 [d\mapsto  R_1(d)R_0(d)] &:  BU\!C(J_T; C^1(\bar\Omega;\R^n))\to \cL(\EE^v_{1,\mu}(J_T), L_{p,\mu}(J_T; L_p(\Omega; \R^n))) \\
  [d\mapsto R_1(d)\Delta ] &: BU\!C(J_T; C(\bar\Omega;\R^n)) \to \cL(\EE^d_{1,\mu}(J_T), L_{p,\mu}(J_T; L_p(\Omega; \R^n)))
\end{aligned}
\end{equation*}
are real analytic, where 
\begin{equation*}
\EE^v_{1,\mu}(J_T)= H^1_{p,\mu}(J_T; L_p(\Omega; \R^n))\cap L_{p,\mu}(J_T; H^2_p(\Omega; \R^n)). 
\end{equation*}
As $\PP_H\in \cL(L_p(\Omega; \R^n), L_{p,\sigma}(\Omega; \R^n))$ we infer that
\begin{equation*}
[d \mapsto \sA(d)]:  BU\!C(J_T; C^1(\bar\Omega; \R^n))\to \cL(\EE_{1,\mu}(J_T), \EE_{0,\mu}(J_T))
\end{equation*}
is real analytic, and we can conclude with~\eqref{embedding-time}  that 
\begin{equation}
\label{A-analytic}
[z\mapsto \sA(z)]\in C^\omega (\EE_{1,\mu}(J_T),  \cL (\EE_{1,\mu}(J_T), \EE_{0,\mu}(J_T))).
\end{equation}
This implies $\cA\in C^\omega(\EE_{1,\mu}(J_T), \EE_{0,\mu}(J_T))$ and the expression for  $\cA^\prime(z_*)z$ is then straightforward.

\bigskip\noindent 
Finally, we consider the nonlinear mapping $\sF$, and we first focus on the terms 
$$
{\rm div}\, (z_k c_k \cD_k \nabla \Phi (\bc,d)), \quad
{\rm div}\, (\ve (d) \nabla \Phi (\bc,d)\otimes \nabla \Phi(\bc,d)), \quad
\ve_a P(d)(\nabla \Phi (\bc,d)\otimes \nabla \Phi(\bc,d))d
$$
that occur in  \eqref{Fc-Fv-Fd} and \eqref{F(z)}.
We infer from  \eqref{Phi-time-analytic} and  \eqref{Ws-BUC}  that
$$
[(\bc,d) \mapsto \Phi (\bc, d)]\in C^\omega(\EE^d_{1,\mu}(J_T)\times \EE^d_{1,\mu}(J_T), BU\!C(\bar J_T; H^2_p(\Omega))),
$$
Hence, it follows from  \eqref{embedding-time}--\eqref{bilinear} that
\begin{align*}
 [(\bc,d)\mapsto {\rm div}\, (z_k c_k \cD_k \nabla \Phi (\bc,d)) ]& \in C^\omega(\EE_{1,\mu}(J_T), BU\!C(J_T; L_p(\Omega))), 
\\
 [(\bc ,d)\mapsto {\rm div}\, (\ve (d) \nabla \Phi (\bc,d)\otimes \nabla \Phi(\bc,d))] &\in C^\omega(\EE_{1,\mu}(J_T), BU\!C(J_T; L_p(\Omega; \R^n))), 
\\
 [(\bc, d)\mapsto \ve_a P(d)(\nabla \Phi (\bc,d)\otimes \nabla \Phi(\bc,d))d] &\in C^\omega(\EE_{1,\mu}(J_T), BU\!C(J_T; H^1_p(\Omega; \R^n))).
\end{align*}
Similar considerations as above, based again on the multilinear structure and  \eqref{embedding-time}--\eqref{bilinear}, then yield
$$
[z\mapsto (F_c(z), F_v(z), F_d(z))]
\in C^\omega(\EE_{1,\mu}(J_T), BU\!C(J_T; L_p(\Omega; \R^N)\times L_p(\Omega; \R^n)\times H^1_p(\Omega; \R^n))).
$$
It remains to observe that 
$$
[z\mapsto R_1(d)F_d(z)]\in C^\omega(\EE_{1,\mu}(J_T), BU\!C(J_T; L_p(\Omega ;\R^n))),
\quad
 \PP_H\in \cL(L_p(\Omega; \R^n), L_{p,\sigma}(\Omega; \R^n))$$ 
 to  conclude 
 \begin{equation}
 \label{BUC-X0}
 [(\bc, d, v)\mapsto \sF(\bc, d, v)] \in C^\omega( \EE_{1,\mu}(J_T), BU\!C(J_T; X_0)).
 \end{equation}
 The embedding $BU\!C(J_T; X_0)\hookrightarrow L_{p,\mu}(J_T;X_0)$ then yields the assertion
 $$
 [z\mapsto \sF(z)]\in C^\omega(\EE_{1,\mu}(J_T), \EE_{0,\mu}(J_T)).
 $$
\end{proof}
\begin{proposition}
\label{smallness}
Let $I_j=[t_j, t_{j+2}] $ be the subintervals of $[0,T]$ introduced in the proof of Proposition~\ref{pro: linearized}  and let 
$\sR^1_j(t)$ and $\sR^2_j(t)$ be as defined in  \eqref{R1 and R2}. 
Then for every $\eta>0$ there exists a number $\delta>0$ such that
\begin{equation*}
\| \sR^1_j (\cdot)  z\|_{\EE_{0,\nu}(I_j)}+ \| \sR^2_j(\cdot) z\|_{\FF_\nu(I_j)}\le \eta \|z\|_{\EE_{1,\nu}(I_j)}, \quad z\in {_0}\EE_{1,\nu}(I_j),
\end{equation*}
whenever $| I_j |\le \delta$.
Here we set $\nu=\mu $ in case $j=0$ and $\nu=1$ otherwise.
\end{proposition}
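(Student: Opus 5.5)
Since $z\in{_0}\EE_{1,\nu}(I_j)$ has vanishing initial trace, the strategy is to separate the terms of $\sR^1_j$ and $\sR^2_j$ in \eqref{R1 and R2} into two kinds: terms whose coefficients are small because $z_*$ is uniformly continuous in time, and lower-order terms that become small on short intervals. All embedding constants must be uniform in $|I_j|\le\delta$. We extend $z$ by zero to the left of the interval, which keeps it in the same class with comparable norm, so embedding constants for ${_0}\EE_{1,\nu}(I_j)$ do not blow up as $|I_j|\to0$. We also use $z_*\in BU\!C(\bar J_T;X_{\gamma,\mu})$ together with \eqref{embedding-time}: the coefficient functions $d_*\in BU\!C(\bar J_T;C^1)$ and $\Phi(\bc_*,d_*)\in BU\!C(\bar J_T;C^1)$ are then uniformly continuous on $\bar J_T$.

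\emph{Principal part of $\sR^1_j$.} By the analyticity of $d\mapsto\sA(d)$ from $BU\!C(C^1)$ into $\cL(\EE_{1},\EE_0)$ established in Proposition~\ref{app: pro: ABF-smooth}, we have
$$\|(\sA(z_*(t))-\sA(z_*(t_j)))z\|_{\EE_{0,\nu}(I_j)}\le C\sup_{t\in I_j}\|d_*(t)-d_*(t_j)\|_{C^1}\,\|z\|_{\EE_{1,\nu}(I_j)},$$
and the supremum becomes small with $\delta$ by uniform continuity.

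\emph{Lower-order parts of $\sR^1_j$.} The remaining terms $[\sA'(z_*)z]z_*-\sF'(z_*)z$ involve $z$ only through lower order: $d$ up to second derivatives, $v$ up to first derivatives, $\bc$ up to first derivatives, and $\Phi'(z_*)z$ in $H^2_p$, which by Lemma~\ref{elliptic-tilde} depends on $(\bc,d)$ only through $L_p\times C^1$. They are therefore bounded in $L_{p,\nu}(I_j;X_0)$ by $C\|z\|_{L_{\infty}(I_j;X_{\gamma,\nu})}$, or by an interpolation norm strictly below $X_1$. For $z$ with zero trace, we estimate $\|z\|_{L_{p,\nu}(I_j;Y)}\le |I_j|^{\theta}\,C\|z\|_{\EE_{1,\nu}(I_j)}$, where $Y$ is an intermediate space, for instance $BU\!C(X_{\gamma,\nu})$. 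This gives a factor $|I_j|^{1/p}$ from the $L_p$ norm of a bounded function, with a constant uniform for zero-trace functions. The coefficients involving $z_*$ are bounded uniformly on $\bar J_T$ by compactness of $\{z_*(t)\}$.

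\emph{The boundary terms $\sR^2_j$.} This is the main obstacle, because the $\FF_\nu$ norm contains the temporal $W^{1/2-1/2p}$ seminorm and a simple sup-bound does not suffice. We write
$$(\sB(z_*(t))-\sB(z_*(t_j)))z=\bigl(b_{*}(t)-b_{*}(t_j)\bigr)\,{\rm tr}\,\bc,$$
where $b_*=\nu\cdot{\rm tr}(z_k\cD_k\nabla\Phi(\bc_*,d_*))$. We then use the multiplication estimate in $\FF_\nu$: the norm of the product is at most $C\|b_*-b_*(t_j)\|_{\infty}\|{\rm tr}\,\bc\|_{\FF}$ plus a seminorm of $b_*$ times $\|{\rm tr}\,\bc\|_{L_\infty}$. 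The first factor is small by uniform continuity. For the second, we use the extra regularity $b_*\in W^{s}_{p,\mu}(J_T;W^{1-1/p}_p(\pa\Omega))$ from Proposition~\ref{pro: Phi-time}; its seminorm restricted to short intervals tends to zero by absolute continuity of the integral. Together with $\|{\rm tr}\,\bc\|_{L_\infty}\lesssim\|z\|_{\EE_1}$ (uniform for zero trace), this gives smallness. The second term $[\sB'(z_*)z]z_*=\nu\cdot{\rm tr}(z_k\cD_k\nabla\Phi'(\bc_*,d_*)[\bc,d])\,{\rm tr}\,c_{*,k}$ is of lower order in $z$. Using Remark~\ref{rem: phi-analytic}(b) with the slightly higher time regularity $s_1>s$ from \eqref{embedded-W-eta}, we gain a factor $|I_j|^{s_1-s}$. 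The case $j=0$ with weight $\mu$ versus $\nu=1$ for $j\ge1$ only changes the constants; these remain uniform because away from $t=0$ the weight is harmless, and near $0$ we use the weighted embeddings.

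Finally, since $\bar J_T$ is covered by finitely many intervals and uniform continuity gives a single modulus of continuity, one $\delta$ works for all $j$.
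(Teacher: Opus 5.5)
Your treatment of three pieces matches the paper's argument:
\begin{itemize}
\item the principal difference $(\sA(z_*(\cdot))-\sA(z_*(t_j)))z$, via uniform continuity of $d_*$ in $C^1$;
\item the term $\sF'(z_*)z$, via a time-bounded coefficient and a power of $|I_j|$ from the short $L_{p,\nu}$-norm;
\item both terms in $\sR^2_j$, via uniform continuity of $b_*$, absolute continuity of its Slobodeckij double integral, and the gain $|I_j|^{s_1-s}$ for the lower-order dependence on $(\bc,d)$.
\end{itemize}

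The gap is the term $[\sA'(z_*)z]z_*$ in $\sR^1_j$. You claim that all coefficients coming from $z_*$ are bounded uniformly on $\bar J_T$ by compactness of $\{z_*(t)\}$. Smallness would then come from a short-time $L_{p,\nu}$-norm of a bounded function. For this term the claim is false. Since $\sA(\wt z)$ depends on $\wt d$ and $\nabla \wt d$, differentiating in the $d$-slot leaves the highest-order derivatives on $z_*$:
\begin{itemize}
\item $A_v'(d_*)[d]v_*\sim d\,\nabla^2 v_*$;
\item $R_1'(d_*)[d]\Delta d_*\sim d\,\nabla^3 d_*$;
\item $(R_1R_0)'(d_*)[d]v_*\sim \nabla d\,\nabla v_*+d\,\nabla^2 v_*$;
\item the $H^1_p$-norm of $R_0'(d_*)[d]v_*$ again contains $d\,\nabla^2 v_*$.
\end{itemize}
These are $X_1$-level quantities of $z_*$, and $z_*$ is only in $L_{p,\mu}(J_T;X_1)$. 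Compactness of $\{z_*(t)\}$ in $X_{\gamma,\mu}$ says nothing about them. So here it is $z_*$ that is merely $L_p$ in time, while $z$ enters only through $\sup_t\|d(t)\|_{C^1}$. Your $|I_j|^{1/p}$ device therefore does not apply.

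The missing step, which is the paper's route, is the estimate
\[
\|[\sA'(z_*)z]z_*\|_{L_{p,\nu}(I_j;X_0)}\le C\sup_{t\in I_j}\|d(t)\|_{C^1(\bar\Omega)}\,\|z_*\|_{L_{p,\nu}(I_j;X_1)}\le C'\Big(\int_{I_j}t^{(1-\nu)p}\|z_*(t)\|_{X_1}^p\,dt\Big)^{1/p}\|z\|_{\EE_{1,\nu}(I_j)}.
\]
Here $C$ depends only on $\sup_t\|d_*(t)\|_{C^1}$. The constant $C'$ is uniform in $|I_j|$ because of the zero-trace embedding ${_0}\EE_{1,\nu}(I_j)\hookrightarrow BU\!C(I_j;X_{\gamma,\nu})$. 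Smallness then comes from absolute continuity of the integral of $t^{(1-\mu)p}\|z_*(t)\|^p_{X_1}$ over short intervals, uniformly in $j$.

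Alternatively, you could make $\sup_t\|d(t)\|_{C^1}$ itself small relative to $\|d\|_{\EE_{1,\nu}(I_j)}$. This works by H\"older continuity in time into a space strictly below the trace space, using $d(t_j)=0$. Either way, smallness of an $L_p$-in-time norm of $z$ is not what is needed for this term.
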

\begin{proof}
We first consider the terms $\sR^2_j(\cdot )z$.
For $(\bc, d) \in \EE^{\bc}_{1,\mu}(J_T)\times \EE^d_{1,\mu}(J_T)$, let
\begin{equation*}
\begin{aligned}
{\bf b}_k(\bc,d) :  & = z_k \cD_k \nabla \Phi(\bc, d). \\
\end{aligned}
\end{equation*}
where $\Phi(\bc, d)$ is the solution of~\eqref{elliptic-phi-time}.
By Remark \ref{rem: phi-analytic}, 
\begin{equation*}
[(\bc, d)\mapsto {\bf b}_k (\bc, d)]: W^s_{p,\nu}(I_j; L_p(\Omega; \R^N))\times W^s_{p,\nu} ( I_j; C^1(\bar\Omega; \R^n))
\to W^s_{p,\nu}(I_j; H^1_p(\Omega; \R^n))
\end{equation*}
is $C^1$ (in fact analytic). 
Hence, letting ${\bf b}^\prime_k(\bc_*, d_*)$ denote the Fr\'echet derivative of ${\bf b}_k$  at the point $(\bc_*,d_*)$, we obtain
\begin{equation}
\label{M-constant}
\|{\bf b}^\prime_k(\bc_*, d_*)(\bc,d)\|_{W^s_{p,\nu}(I_j; H^1_p(\Omega; \R^n))}
\le M \big[ \| \bc \|_{W^s_{p,\nu}(I_j; L_p(\Omega; \R^N)) } \!  + \!   \|d\|_{W^s_{p,\mu}(I_j; C^1(\bar\Omega; \R^n))} \big],
\quad 
\end{equation}
for $j=0,\ldots ,m$, $k=1,\ldots,N$, where the constant $M$  is independent of the intervals $I_j$.

\smallskip\noindent
With $ b_{k,\pa\Omega} (\bc, d): = \nu_{\pa \Omega} \cdot  {\rm tr}_{\pa \Omega} {\bf b}_k(\bc, d)$
we have
\begin{equation*}
B_k (\bc, d) c_k = \nu \cdot  {\rm tr }_{\pa\Omega} (\cD_k \nabla c_k) + b_{k,\pa \Omega}(\bc, d) {\rm tr}_{\pa \Omega} c_k
\end{equation*}
for $c_k \in \EE^1_{1,\mu}(J_T):= H^1_{p,\mu}(J_T; L_p(\Omega)) \cap L_{p,\mu}(J_T; H^2_p(\Omega)).$ 
To simplify notation, we fix $k\in \{1,\ldots, N\}$ and suppress it in the sequel. 
With this agreement, we obtain
\begin{equation*}
\begin{aligned}
  \sB(z)z &=\sB (\bc, d) c = \nu \cdot  {\rm tr }_{\pa\Omega} (\cD \nabla c)  +  b_{\pa \Omega}(\bc, d) {\rm tr}_{\pa \Omega} c,  \\
 [\sB^\prime(z_*)z ] z_*  & =[ b^\prime _{\pa \Omega }  (\bc_*, d_*)(\bc, d)] {\rm tr}_{\pa \Omega} c_* , 
\end{aligned}
\end{equation*}
where $b_{\pa \Omega}= b_{k, \pa\Omega}$, 
and where we set
$ b^\prime _{\pa \Omega }  (\bc_*, d_*)(\bc, d) = {\rm tr}_{\pa \Omega} [b^\prime   (\bc_*, d_*)(\bc, d)] $.


\medskip\noindent
We start with estimating the $W^s_{p,\nu}(I_j; L_p(\pa\Omega))$ semi-norm, $s=1/2-1/2p$, of the term
$$ 
\big(\sB(z_*(\cdot)) - \sB(z_*(t_j))\big)z = [b_{\pa\Omega}(\bc_*, d_*)(\cdot)-b_{\pa\Omega}(\bc_*, d_*)(t_j) ] {\rm tr}_{\pa\Omega} c
$$
in $\sR^2_j$ for $c\in {_0}\EE^1_{1,\nu} (I_j)$, to the result
\begin{align*}
& \big[\, [ b_{\pa\Omega}(\bc_*, d_*)(\cdot)-b_{\pa\Omega}(\bc_*, d_*)(t_j) ] {\rm tr}_{\pa\Omega} c \big]_{W^s_{p,\nu}(I_j; L_p(\pa \Omega))}^p  \\[4pt]
& \quad \leq   \sup_{t\in I_j}	\| b_{\pa\Omega}(\bc_*, d_*)(t)-b_{\pa\Omega}(\bc_*, d_*)(t_j)  \|^p_{L_\infty(\pa\Omega)) } 
\big[ {\rm tr}_{\pa\Omega} c \big]_{W^s_{p,\nu}(I_j; L_p(\pa \Omega))}^p  \\[4pt]
& \qquad +    \int_{I_j} \int^t_{t_j} \tau^{(1-\nu )p}  \| {\rm tr}_{\pa \Omega}c(\tau)\|_{L_p(\pa \Omega)}^p  
	\frac{\|  b_{\pa\Omega}(\bc_*,d_*)(t) -  b_{\pa\Omega}(\bc_*,d_*)(\tau) \|^p_{L_\infty(\pa \Omega)}}{ (t-\tau)^{sp+1}  } \, d\tau \, dt \\[4pt]
& \quad \le 
    C_{1,j}(z_*)  [{\rm tr}_{\pa\Omega} c]^p_{W^s_{p,\nu}(I_j; L_p(\pa \Omega))} 
 + C_{2,j}(z_*) \sup_{\tau \in I_j} \|{\rm tr}_{\pa\Omega} c (\tau)\|^p_{L_p(\pa \Omega)} \\[4pt]
  & \quad\le      C\Big (C_{1,j}(z_*)  + C_{2,j} (z_*) \Big) \|c \|^p_{\EE^1_{1,\nu}(I_j)} 
\end{align*}
where
\begin{equation*}
\begin{aligned}
C_{1,j}(z_*): &=  \sup_{ t\in I_j } \| b_{\pa \Omega} (\bc_*, d_*)(t)-b_{\pa \Omega}(\bc_*, d_*)(t_j)  \|^p_{L_\infty(\pa\Omega) } \\
C_{2,j} (z_*) : &= \int_{I_j} \int^t_{t_j} \tau^{(1-\nu )p}
 \frac { \|   b(\bc_*,d_*)(t) -   b(\bc_*,d_*)(\tau) \|^p_{H^1_p(\Omega)}}{ (t-\tau)^{sp+1} }  \,d\tau\,dt . \\
 \end{aligned}
 \end{equation*}
$C$ denotes the maximum of the respective norms of the continuous linear mappings
\begin{equation*}
\begin{aligned}
& {_0}\EE^1_{1,\nu}(I_j)= {_0}H^1_{p,\nu}(I_j; L_p(\Omega))\cap L_{p, \nu}(I_j; H^2_p(\Omega))
\underset{ {\rm tr}_{\pa \Omega} } {\longrightarrow  } {_0} W^s_{p,\nu}(I_j; L_p(\pa \Omega)) \\
&{_0}\EE^1_{1,\nu}(I_j) \hookrightarrow  {_0}BU\!C(I_j; H^1_p(\Omega))
 \underset{ {\rm tr}_{\pa \Omega} } {\longrightarrow  }{_0}BU\!C(I_j; L_p(\pa \Omega )) .
 \end{aligned}
 \end{equation*}
We know that $C$ is independent of $|I_j|$,  see for instance \cite[Theorem 4.5]{{MeSc12}} for the first line.
Moreover, we employed \eqref{mu-condition} to conclude that the mapping in the second line is defined and has norm independent of $| I_j |$.

\medskip\noindent
We  now show that  the quantities $C_{1,j}(z_*)$ and $C_{2,j}(z_*)$ can be rendered as small as we please by making $| I_j | $ small.

\smallskip
Indeed, we know from Proposition~\ref{pro: Phi-time}  that
$b(\bc_*, d_*) \in W^s_{p,\mu}(J_T; H^1_p(\Omega))$.
Moreover, we have the embeddings, see \eqref{mu-condition} and \cite[Proposition 2.10]{MeSc12},
$$
W^s_{p,\mu}(J_T; H^1_p(\Omega) ) \hookrightarrow BU\!C(J_T; H^1_p(\Omega)) \underset{{\rm tr}_{\pa\Omega} } {\longrightarrow} BU\!C(J_T ; L_\infty(\pa \Omega)).
$$
Hence, by uniform continuity, 
$C_{1,j}(z_*)$  can be made as small as we wish, by making $| I_j |$ sufficiently small. 
For the quantity $C_{2,j}(z_*)$ we again note that 
$b(\bc_*, d_*) \in W^s_{p,\mu}(J_T; H^1_p(\Omega))$. 
Hence, given any $\eta>0$, by absolute continuity of the double integral in~\eqref{seminorm} there exists $\delta>0$ such that
  $C_{2,j}(z_*)\le \eta$ whenever $| I_j | \le \delta$.

\medskip\noindent
We also have the estimate
\begin{align*}
&\| [b_{\pa\Omega}(\bc_*, d_*)(\cdot) - b_{\pa\Omega}(\bc_*,d_*)(t_j)] {\rm tr}_{\pa \Omega}c\|_{L_{p,\nu}(I_j; W^{1-1/p}_p(\pa\Omega))} \\
&\le  C\sup_{t\in I_j} \| b(\bc_*, d_*)(t) - b(\bc_*,d_*)(t_j) \|_{H^1_p(\Omega)} \| c\|_{L_{p,\nu}(I_j; W^1_p(\Omega))} \\
& \le C \sup_{t\in I_j} \| b(\bc_*, d_*)(t) - b(\bc_*,d_*)(t_j) \|_{H^1_p(\Omega)}  \| c\|_{\EE_{1,\nu}(I_j).}
\end{align*}
with a uniform constant $C$.
By uniform continuity, for given $\eta>0$ this term can be estimated by 
$\eta \| c\|_{\EE_{1,\nu}(I_j)}$, provided $| I_j |$ is chosen small enough.

\medskip\noindent
We have shown that for a given $\eta>0$, there exists a number $\delta>0$ such that if $| I_j |\le \delta$, 
$$
\|\, [ b_{\pa\Omega}(\bc_*, d_*)(\cdot)-b_{\pa\Omega}(\bc_*, d_*)(t_j) ] {\rm tr}_{\pa\Omega} c \,\|_{\FF_\nu(I_j)} \le \eta \|c\|_{\EE_{1,\nu}(I_j)}.
$$

\medskip\noindent
Next we observe that
\begin{align*}
& \big[{\rm tr}_{\pa \Omega}[b^\prime (\bc_*, d_*)  (\bc, d)] c_* \big]_{W^s_{p,\nu}(I_j; L_p(\partial \Omega))}^p  \\ 
& \quad  \leq \sup_{t \in I_j }  \| {\rm tr}_{\pa \Omega} b^\prime(\bc_*, d_*)(\bc, d) (t) \|_{L_\infty(\pa \Omega)}^p  
\int_{I_j} \int^t_{t_j} \tau^{(1-\nu )p}  
		\frac{\|  {\rm tr}_{\pa \Omega} (c_*(t) -  c_*(\tau)) \|^p_{L_p(\pa\Omega)}}{ (t-\tau)^{sp+1} } \, d\tau \, dt  \\[4pt]
& \quad +   \sup_{\tau \in I_j } \| {\rm tr}_{\pa \Omega} c_*(\tau )\|^p_{L_\infty(\pa \Omega)} 
\big[ {\rm tr}_{\pa\Omega} b^\prime(\bc_*, d_*)(\bc, d) \big]^p_{W^s_{p,\nu}(I_j; L_p(\partial \Omega))}  \\[4pt]
& \quad \le
C_{3,j} (z_*)\left \| b^\prime(c_*, d_*)(\bc, d )\right\|^p _{W^{s}_{p,\nu}(I_j; H^1_p(\Omega))} \\[4pt]
& \quad \le 
M  C_{3,j}(z_*) \big[ \| \bc \|_{W^s_{p,\nu}(I_j; L_p(\Omega)) } + \|d\|_{W^s_{p,\nu}(I_j; C^1(\bar\Omega))} \big]^p.
 \end{align*}
Here,   $M$ is the constant from \eqref{M-constant}, 
 \begin{equation*}
 \begin{aligned}
 C_{3,j} (z_*) & :=  C\Big[\sup_{\tau \in I_j } \|{\rm tr}_{\pa\Omega} c_*(\tau )\|^p_{L_\infty(\pa \Omega)} 
 +  \int_{I_j} \int^t_{t_j} \tau^{(1-\nu )p}  
		\frac{\|  {\rm tr}_{\pa \Omega} (c_*(t) -  c_*(\tau)) \|^p_{L_p(\pa\Omega)}}{ (t-\tau)^{sp+1} } \, d\tau \, dt \Big],\\
 \end{aligned}
 \end{equation*}
and $C$ is the maximum of the norms of the linear mappings
\begin{equation*}
\begin{aligned}
&{_0}W^s_{p,\mu}(I_j; H^1_p(\Omega)) \longrightarrow {_0}BU\!C(I_j; H^1_p(\Omega)) \underset{{\rm tr}_{\pa\Omega} } {\longrightarrow} {_0}BU\!C(I_j; L_\infty(\pa \Omega)), \\
& W^s_{p,\mu}(I_j; H^1_p(\Omega) ) \underset{{\rm tr}_{\pa\Omega} } {\longrightarrow}W^s_{p,\mu} (I_j; L_p(\partial\Omega)). \\
\end{aligned}
\end{equation*}
As before, $C$ does not depend on $| I_j |$.

\bigskip\noindent
Let $\eta>0$ be given. We want to show that there exists a number $\delta>0$ such that 
\begin{equation}
\label{app: NC}
M  C_{3,j}(z_*) \big[ \| \bc \|_{W^s_{p,\mu}(I_j; L_p(\Omega)) } + \|d\|_{W^s_{p,\nu}(I_j; C^1(\bar\Omega))} \big]^p 
\le \eta \| (\bc, d) \|^p_{\EE_{1,\nu}(I_j)} .
\end{equation}
whenever $|I_j | \le \delta$.
Here we note that the term $\sup_{\tau \in I_j} \|{\rm tr}_{\pa\Omega}c_*(\tau)\|_{L_\infty(\pa\Omega)}$ in the quantity $C_{3,j}(z_*)$ cannot be made small.
In fact, we can only deduce that it is uniformly bounded (by $\sup_{\tau \in \bar J_T} \|c_*(\tau)\|_{L_\infty(\pa\Omega)}$). 
While the second term in $C_{3,j}(z_*)$ can be made small for small $|I_j|$ by absolute continuity, we will only need to use the  bound
$[{\rm tr}_{\pa\Omega} c_*]_{W^s_{p,\mu}(J_T; L_p(\pa\Omega))}$. 
The assertion in \eqref{app: NC} can be shown by the following considerations.

\bigskip
For $\bc \in \EE^{\bc}_{1,\nu}(I_j)$
we obtain by H\"older's inequality 
\begin{equation*}
\begin{aligned}
\| \bc(t)-\bc(\tau) \|_{L_p(\Omega)}  
\le  \int^t_\tau \|\dot \bc (s)  \|_{L_p(\Omega)}  \,ds 
&\le \left(\int^t_\tau s^{(\nu-1)p^\prime}\,ds \right)^{1/{p^\prime}} \| \bc \|_{\EE^{\bc}_{1,\nu}(I_j) } \\
&\le c(\nu ,p) (t-\tau)^{\nu -1/p} \| \bc \|_{\EE^{\bc}_{1,\nu}(I_j)}.
\end{aligned}
\end{equation*}
Hence
\begin{align*}
[\, \bc \,]^p_{W^s_{p,\nu}(I_j; L_p(\Omega))}
& =\int_{I_j} \int^t_{t_j} \tau^{(1-\nu)p} \frac{ \|c(t) -c(\tau)\|^p_{L_p(\Omega)}} { (t-\tau)^{sp+1}} \,d\tau\,dt  \\
& \le c(\nu,p)^p \int_{I_j} \int^t_{t_j} \tau^{(1-\nu)p}  (t-\tau)^{\sigma -1} \,d\tau\,dt  \, \| \bc \|^p_{\EE^{\bc}_{1,\nu}(I_j)},
\end{align*}
where
$\sigma =(\nu-s)p-1$. It follows  from \eqref{mu-condition} that $\sigma>0$ as $n\ge 2$.
One readily verifies that given any $\eta>0$ there exists a number $\delta >0$ such that
$$
c(\nu,p)^p \int_{I_j} \int^t_{t_j} \tau^{(1-\nu)p}  (t-\tau)^{\sigma -1} \,d\tau\,dt \le \eta
\quad\text{whenever $| I_j |\le \delta$}.
$$ 
Moreover, we have for $\bc \in {_0}\EE^{\bc}_{1,\nu}(I_j)$ 
\begin{equation*}
\begin{aligned}
 \| \bc \|_{L_{p,\nu}(I_j; L_p(\Omega; \R^N))}
 & \le c(\nu,p) | I_j |^{1-\nu +1/p}  \| \bc  \|_{{_0}BUC(I_j; L_p(\Omega; \R^n))}  \\
& \le c_2  c(\nu,p) | I_j |^{1-\nu +1/p}  \| \bc \|_{{_0}\EE^c_{1,\nu}(I_j)},
\end{aligned}
\end{equation*}
where the embedding constant $c_2$ of ${_0}\EE^{\bc}_{1,\nu}(I_j)\hookrightarrow {_0}BU\!C(I_j ; L_p(\Omega; \R^N))$ is independent of $| I_j |$.
Hence, given any $\eta>0$, there exists $\delta>0$ such that
$$
\| \, \bc \, \|^p_{W^s_{p,\nu}(I_j; L_p(\Omega))}\le \eta  \| \bc \|^p_{\EE^{\bc}_{1,\nu}(I_j)},\quad\text{whenever $| I_j |\le \delta$.}
$$
To deal with the  term $\|d\|_{W^s_{p,\nu}(I_j; C^1(\bar\Omega; \R^n))}$ in~\eqref{app: NC}, we choose $s_1\in (s, 1-n/2p)$ so that 
 \eqref{embedded-W-eta} holds. 
 Then we have
$$ 
 {_0}\EE^d_{1,\nu}(I_j)\hookrightarrow {_0}W^{s_1}_{p,\nu}(I_j; C^1(\bar \Omega; \R^n))\hookrightarrow  {_0}W^s_{p,\nu}(I_j; H^1_p( \Omega; \R^n))
$$
and
$$ 
\| d \|_{{_0}W^{s_1}_{p,\nu}(I_j; H^1_p(\Omega; \R^n))}\le c_1 \|d\|_{{_0}\EE^d_{1,\nu}(I_j)},
$$
where the embedding constant $c_1$ is independent of $| I_j |$ for $d\in {_0}\EE^d_{1,\nu}(I_j)$.
One then readily verifies that
$$
[d]_{W^s_{p,\nu}(I_j; C^1(\bar \Omega, \R^n))}
\le | I_j |^{s_1 -s}[d]_{W^{s_1}_{p,\nu}(I_j; C^1(\bar \Omega, \R^n))}
\le c_1  | I_j |^{s_1 - s}  \|d\|_{\EE^d_{1,\nu}(I_j)}.
$$
As above, we have
\begin{equation*}
\begin{aligned}
 \| d \|_{L_{p,\nu}(I_j; C^1(\bar\Omega; \R^n))}
 & \le c(\nu,p) | I_j |^{1-\nu +1/p}  \| d \|_{{_0}BUC(I_j; C^1(\bar\Omega; \R^n))}  \\
& \le c_2  c(\nu,p) | I_j |^{1-\nu +1/p}  \| d \|_{{_0}\EE^d_{1,\nu}(I_j)},
\end{aligned}
\end{equation*}
where the embedding constant $c_2$ of ${_0}\EE^d_{1,\nu}(I_j)\hookrightarrow {_0}BU\!C(I_j, C^1(\bar\Omega; \R^n))$ is independent of $I_j$.

\medskip\noindent
We can now conclude that for every $\eta>0$ there exists a number $\delta>0$ such that
$$
 \| \bc \|_{W^s_{p,\nu}(I_j; L_p(\Omega)) } + \|d\|_{W^s_{p,\nu} C^1(\bar\Omega)^n)} \le \eta \|(\bc,d)\|_{\EE^{\bc}_{1,\nu}(I_j)\times \EE^d_{1,\nu}(I_j)},
 $$
for  $(\bc ,d)\in {_0}\EE^{\bc}_{1,\nu}(I_j)\times {_0}\EE^d_{1,\nu}(I_j)$, whenever $| I_j |\le \delta$.
It is not difficult to see that 
$$
  \| {\rm tr}_{\pa \Omega}[b^\prime (\bc_*, d_*)  (\bc, d)] c_* \|_{L_{p,\nu}(I_j; W^{1-1/p}_p(\partial \Omega))}
  \le   \eta \|(\bc,d)\|_{\EE^{\bc}_{1,\nu}(I_j)\times \EE^d_{1,\nu}(I_j)}.
$$
In summary, we have shown that given $\eta>0$ there exists $\delta >0 $ such that
\begin{equation*}
\| \sR^2_j(\cdot) z\|_{\FF_\nu(I_j)}\le \eta \|z\|_{\EE_{1,\nu}(I_j)}, \quad z\in {_0}\EE_{1,\nu}(I_j),
\end{equation*}
provided $ | I_j | \le \delta$.

\bigskip\noindent
For the term  $(\sA(z_*(\cdot))  -\sA(z_*(t_j))z$  in $\sR^1_j(\cdot )$ we see that
given any $\eta>0$ there exists $\delta>0$ such that
\begin{equation*}
\begin{aligned}
 \| \big (A_v(d_*(\cdot)) - A_v(d_*(t_j))\big)v \|_{L_{p,\nu}(I_j; L_p(\Omega; \R^n))}  &\le \eta \| v \|_{\EE^v_{1,\nu}(I_j)} ,\\
  \| \big( R_1(d_*(\cdot)) R_0(d_*(\cdot))-R_1(d_*(t_j)) R_0(d_*(t_j)) \big) v  \|_{L_{p,\nu}(I_j; L_p(\Omega; \R^n))}  &\le \eta \| v \|_{\EE^v_{1,\nu}(I_j)} 
\end{aligned}
\end{equation*}
provided $| I_j |\le \delta$. 
Indeed, this follows as $A_v(d)$ and $R_1(d) R_0(d)$ are second order differential operators 
with coefficients in $BU\!C(\bar J_T; C(\bar\Omega; \R^{n\times n}))$, acting on $v$,
see \eqref{Al-description} and \eqref{R0}-\eqref{R1}.
 (In fact, the coefficients have more spatial regularity, as $d\in BU\!C(J_T; C^2(\bar\Omega; \R^n)))$ but we do not need this here). 

For the same reason, given $\eta>0$ there exists $\delta>0$ such that
\begin{equation*}
\begin{aligned}
\| \big(R_1 (d_*(\cdot))- R_1(d_*(t_j))) \Delta d \|_{L_{p,\nu}(I_j; L_p(\Omega; \R^n))} & \le \eta \| d \|_{\EE^d_{1,\nu}(I_j)} , \\
 \| \big(R_0(d_*(\cdot)) - R_0(d_*(t_j))\big) v  \|_{L_{p,\nu}(I_j; H^1_p(\Omega; \R^n))} & \le \eta \| v \|_{\EE^v_{1,\nu}(I_j)}, 
\end{aligned}
\end{equation*}
provided $| I_j | \le \delta $.
In summary, we have shown that given $\eta>0$ there exists $\delta>0$ such that
\begin{equation*}
\| \big( \sA(z_*(\cdot))- \sA(z_*(t_j))\big) z \|_{\EE_{0,\nu}(I_j)}\le \eta \| z \|_{\EE_{1,\nu}(I_j)}. 
\end{equation*}
provided $| I_j | \le \delta$.

\medskip\noindent
For the term $ [\sA^\prime (z_*) z]z_*$ we have by \eqref{A-analytic}
\begin{equation*}
\begin{aligned}
  \| [\sA^\prime (z_*) z]z_*\|_{L_{p,\nu}(I_j; X_0)}
&  \le \| \sA^\prime (z_*)z\|_{\cL (\EE_{1,\nu}(I_j),L_{p,\nu}(I_j; X_0))} \|z_*\|_{L_{p,\nu}(I_j; X_0)} \\
 & \le C(z_*) \left( \int_{I_j} t^{(1-\nu)p } \| z_*(t)\|^p_{X_1}\, dt\right)^{1/p}\|z\|_{\EE_{1,\nu}(I_j) }. 
\end{aligned}
\end{equation*}
By absolute continuity of the integral $\int_{J_T}  t^{(1-\mu)p } \| z_*(t)\|^p_{X_1}\, dt $
we can find for any $\eta>0$ a number $\delta>0$ such that
$$
  \| [\sA^\prime (z) z]z_*\|_{L_{p,\nu}(I_j; X_0)}\le \eta  \|z\|_{\EE_{1,\nu}(I_j},\quad z\in {\EE_{1,\nu}(I_j)},
 $$
provided $| I_j | \le \delta$.
\medskip\noindent
Finally, for the term $\sF^\prime(z_*)z$ in $\sR^1_j(\cdot)z$ we  conclude with \eqref{BUC-X0} that
\begin{equation*}
\begin{aligned}
\| \sF^\prime (z_*)z \|_{L_{p,\nu}(I_j; X_0)} & \le c(p,\nu) | I_j |^{\nu-1/p}  \| \sF^\prime (z_*)\|_{\cL (\EE_{1,\mu}(J_T), BU\!C(J_T ;X_0))}  \|z\|_{\EE_{1,\nu}(I_j)} \\
&\le C(p,\nu, z_*) \, | I_j |^{\nu-1/p}\, \|z\|_{\EE_{1,\nu}(I_j)},
\qquad z\in {_0}\EE_{1,\nu}(I_j).
\end{aligned}
\end{equation*}
Combining all the steps, we have shown that given any $\eta>0$, there exist a number $\delta>0$ such that
$$
\| \sR^1_j(\cdot) z\|_{\EE_{0,\nu}(I_j)}\le \eta \|z\|_{\EE_{1,\nu}(I_j)},
$$
provided $| I_j |\le \delta $,
and the proof of the proposition is now complete.
\end{proof}
\begin{proposition}
\label{pro: right-inverse}
Assume  that \eqref{permittivity} and \eqref{mu-condition} hold.
For $z=(\bc, v,d)\in X_{\gamma ,\mu }$ let
\begin{equation*}
\begin{aligned}
&\cB_k(z)= B_k (\bc, d) c_k = \nu \cdot  {\rm tr }_{\pa\Omega} (\cD_k \nabla c_k) + b_{k,\pa \Omega}(\bc, d) {\rm tr}_{\pa \Omega} c_k, \\
&\cB(z)= (\cB_1(z),\ldots, \cB_N(z)),
\end{aligned}
\end{equation*}
where $\bc =(c_1,\ldots, c_N)$ and where the terms $b_{k ,\pa\Omega}(\bc, d)$ are defined in~\eqref{Phi-boundary}.
Then
$$\cB \in C^1 (X_{\gamma,\mu}, Y_{\gamma, \mu})\quad \text{with}\quad Y_{\gamma,\mu}= W^{2(\mu -1/p)-(1+1/p)}_p(\pa\Omega; \R^N).$$
Moreover, for each $\wt z\in X_{\gamma,\mu}$ there exists a right-inverse
$$\cR(\wt z)\in \cL( Y_{\gamma,\mu}, X_{\gamma,\mu}) \quad \text{for} \quad \cB^\prime (\wt z)\in \cL (X_{\gamma,\mu}, Y_{\gamma,\mu}).$$
\end{proposition}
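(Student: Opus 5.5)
We split $\cB(z)=\cB^{\rm lin}(\bc)+\cN(z)$ with $\cB^{\rm lin}_k(\bc)=\nu\cdot{\rm tr}_{\pa\Omega}(\cD_k\nabla c_k)$ and $\cN_k(z)=b_{k,\pa\Omega}(\bc,d)\,{\rm tr}_{\pa\Omega}c_k$, and treat the two pieces separately. By \eqref{interpolation}, $\bc\in W^{2\mu-2/p}_p(\Omega;\R^N)$ with $2\mu-2/p>1+1/p$, which follows from \eqref{mu-condition}. Hence $\nabla c_k\in W^{2\mu-2/p-1}_p(\Omega)$, and trace theory gives ${\rm tr}_{\pa\Omega}\nabla c_k\in W^{2\mu-2/p-1-1/p}_p(\pa\Omega)$. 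Thus $\cB^{\rm lin}\in\cL(X_{\gamma,\mu},Y_{\gamma,\mu})$.

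For $\cN$: since $W^{2\mu-2/p}_p(\Omega)\hookrightarrow C^1(\bar\Omega)$ and $X_{\gamma,\mu}$ controls $d$ in $W^{1+2\mu-2/p}_p\hookrightarrow C^2(\bar\Omega)$, the map $z\mapsto(\bc,d)\in L_p(\Omega;\R^N)\times C^1(\bar\Omega;\R^n)$ is bounded linear. Lemma~\ref{elliptic-tilde} then shows that $(\bc,d)\mapsto\Phi(\bc,d)\in H^2_p(\Omega)$ is real analytic, so $b_{k,\pa\Omega}\in W^{1-1/p}_p(\pa\Omega)$ depends analytically on $z$. Also ${\rm tr}_{\pa\Omega}c_k\in W^{2\mu-3/p}_p(\pa\Omega)$. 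Since $p>n$, both $W^{1-1/p}_p(\pa\Omega)$ and $W^{2\mu-3/p}_p(\pa\Omega)$ embed into $C(\pa\Omega)$ and are multiplication algebras. Using $Y_{\gamma,\mu}$-regularity $r:=2\mu-3/p-1<\min(1-1/p,\,2\mu-3/p)$ (note $r<1-1/p$ because $\mu\le1$), the product is a continuous bilinear map into $W^r_p(\pa\Omega)=Y_{\gamma,\mu}$. Hence $\cN$ is real analytic, in particular $C^1$, with
$$\cN_k'(\wt z)z=b_{k,\pa\Omega}(\wt\bc,\wt d)\,{\rm tr}_{\pa\Omega}c_k+[b_{k,\pa\Omega}'(\wt\bc,\wt d)(\bc,d)]\,{\rm tr}_{\pa\Omega}\wt c_k .$$

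For the right inverse we look for $\cR(\wt z)$ taking values in $\{(\bc,0,0)\}$, so only the $\bc$-components are needed. The key observation is that, restricted to $\bc$ with $d=0$, the derivative becomes
$$\cB'(\wt z)(\bc,0,0)=\cB^{\rm lin}(\bc)+\big(b_{k,\pa\Omega}(\wt\bc,\wt d){\rm tr}_{\pa\Omega}c_k\big)_k+\big([\pa_{\bc}b_{k,\pa\Omega}(\wt\bc,\wt d)\bc]\,{\rm tr}\,\wt c_k\big)_k .$$
The last term is nonlocal but of lower order: it depends on $\bc$ only through $\Phi$, that is, through $L(\wt d)^{-1}\sum z_jc_j$. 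To avoid inverting a full perturbed problem, we restrict further to the closed subspace $V=\{\bc\in W^{2\mu-2/p}_p:\ \int_\Omega c\,\psi=0\}$, or more simply construct $\bc$ supported in a collar near $\pa\Omega$. The cleanest route, which we try first, is a Neumann-type lifting. Take the standard right inverse $\cR_0\in\cL(Y_{\gamma,\mu},W^{2\mu-2/p}_p(\Omega;\R^N))$ of the oblique derivative trace $\bc\mapsto(\nu\cdot{\rm tr}(\cD_k\nabla c_k))_k$, chosen so that ${\rm tr}_{\pa\Omega}\cR_0g=0$; it exists by the trace theorem for the pair $({\rm tr},\pa_\nu)$ together with uniform ellipticity \eqref{D-positive-definite}. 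With ${\rm tr}\,c_k=0$ the second term vanishes. The remaining perturbation $K g:=\big([\pa_{\bc}b_{k,\pa\Omega}(\bc)\cR_0g]\,{\rm tr}\,\wt c_k\big)_k$ is controlled by the $L_p$-norm of $\cR_0g$ only, since $\Phi$ gains two derivatives from $L_p$.

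The main obstacle is absorbing $K$. Our plan is to scale the lifting into a thin collar: replace $\cR_0$ by $\cR_\lambda g$ with ${\rm supp}\subset\{{\rm dist}(x,\pa\Omega)<1/\lambda\}$. Then $\|\cR_\lambda g\|_{L_p}\le C\lambda^{-\theta}\|g\|_{Y_{\gamma,\mu}}$ for some $\theta>0$, while the higher-norm bound stays uniform. This makes $\|K_\lambda\|<1$ for large $\lambda$, so $\cB'(\wt z)\cR_\lambda=I+K_\lambda$ is invertible by a Neumann series, and we set $\cR(\wt z):=\cR_\lambda(I+K_\lambda)^{-1}$. If the collar estimate proves delicate, the fallback is to argue that $K$ is compact, so that $I+K$ is Fredholm of index zero, and then correct the finite-dimensional cokernel by adding finitely many smooth boundary-localized functions.
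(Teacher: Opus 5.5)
Your argument is correct. Its skeleton matches the paper's: both take $\cR(\wt z)$ with values in $\{(\bc,0,0)\}$, so the $d$-derivative term $\fD(\wt\bc,\wt d)d$ drops out and only the $\bc$-part of $\cB'(\wt z)$ has to be right-inverted. You differ in how that right inverse is produced.

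The paper puts the local part $B_k(\wt\bc,\wt d)c_k$ and the nonlocal part $[D_1 b_{k,\pa\Omega}(\wt\bc,\wt d)\bc]\,{\rm tr}_{\pa\Omega}\wt c_k$ into a single first-order operator $\fB(\wt\bc,\wt d)$. It then cites \cite[Proposition 2.5.1]{Mey10} for a right inverse, without commenting on the nonlocal coupling. It also asserts the $C^1$-property directly from Lemma~\ref{elliptic-tilde}.

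You do both steps by hand. You check the mapping properties via trace theory and the algebra property of $W^{1-1/p}_p(\pa\Omega)$, landing in $W^r_p(\pa\Omega)$ with $r=2\mu-3/p-1$. You then use a lifting of the oblique derivative with vanishing Dirichlet trace, which kills the local zero-order term. The remaining nonlocal term $K$ only sees $\|\bc\|_{L_p}$ through $L(\wt d)^{-1}$, and you absorb it by a Neumann series. This is more self-contained, and it shows explicitly why the nonlocal term is harmless, a point the paper leaves to the citation.

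The collar step is easier than you fear. Take $\cR_\lambda:=\chi(\lambda\,{\rm dist}(\cdot,\pa\Omega))\cR_0$ with $\chi\equiv 1$ near $0$. This leaves the traces of $u$ and $\nabla u$ on $\pa\Omega$ unchanged. The embedding $W^{2\mu-2/p}_p(\Omega)\hookrightarrow C(\bar\Omega)$ then gives $\|\cR_\lambda g\|_{L_p}\le C\lambda^{-1/p}\|g\|_{Y_{\gamma,\mu}}$. Hence $\|K_\lambda\|_{\cL(Y_{\gamma,\mu})}\le C(\wt z)\lambda^{-1/p}<1$ for large $\lambda$, and the Fredholm fallback is not needed.

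You should, however, drop the claim that the higher-norm bound of $\cR_\lambda$ stays uniform in $\lambda$; it is false for $g\neq 0$. Suppose a family is bounded in $W^{2\mu-2/p}_p$ and tends to $0$ in $L_p$. Then it converges weakly to $0$ in $W^{2\mu-2/p}_p$. Its oblique-derivative traces, which all equal $g$, would then converge weakly to $0$, forcing $g=0$. The claim is never used, though. For one fixed $\lambda$, boundedness of $\cR_\lambda$ from $Y_{\gamma,\mu}$ into $W^{2\mu-2/p}_p(\Omega;\R^N)$ is all that $\cR(\wt z)=\cR_\lambda(I+K_\lambda)^{-1}$ requires.
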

\begin{proof}
It follows from Lemma~\ref{elliptic-tilde} that $\cB\in C^1(X_{\gamma,\mu}, Y_{\gamma,\mu})$. One readily verifies that
$$
\cB^\prime_k(\wt \bc, \wt d)(\bc, d) = B_k(\wt\bc, \wt d) c_k
+ [D_1 b_{k,\pa\Omega} (\wt \bc, \wt d) \bc ]{\rm tr}_{\pa \Omega} \wt c_k  + [D_2 b_{k,\pa\Omega} (\wt \bc , \wt d)d] {\rm tr}_{\pa \Omega} \wt c_k ,
$$
where $D_1b_k$ and $D_2b_k$ are the derivatives of $b_k$ with respect to $\bc$ and $d$, respectively.
Setting 
\begin{equation*}
\begin{aligned} 
& \fB(\wt \bc, \wt d) \bc =
\begin{bmatrix*}[l]
 & \!\!\!\!\!  B_1(\wt \bc, \wt d) c_1 
 + [D_1b_{1,\pa \Omega}(\wt \bc, \wt d) \bc ]{\rm tr}_{\pa \Omega}\wt c_1 \\
 & \qquad \vdots \\
 & \!\!\!\!\! B_N(\wt \bc, \wt d) c_N 
 +  [D_1b_{N,\pa \Omega}(\wt \bc, \wt d) \bc ]{\rm tr}_{\pa \Omega} \wt c_N \\
 \end{bmatrix*}\!\!,  \ 
 \fD(\wt \bc, \wt d)d=
\begin{bmatrix*}[l]
& \!\!\!\!\! [D_2 b_{1,\pa\Omega} (\wt \bc, \wt d)d]{\rm tr}_{\pa\Omega} \wt c_1 \\
& \qquad  \vdots \\
&  \!\!\!\!\! [D_2 b_{N,\pa\Omega} (\wt \bc, \wt d)d]{\rm tr}_{\pa\Omega} \wt c_N \\
\end{bmatrix*}
\end{aligned}
\end{equation*}
we have
\begin{equation}
\label{B+D}
\cB^\prime (\wt \bc,\wt d)(\bc, d)=  \fB(\wt \bc, \wt d) \bc + \fD(\wt \bc, \wt d)d.
\end{equation}
We note that $\fB(\wt \bc, \wt d) $ is a boundary operator of order one acting on $\bc$, while
$ \fD(\wt \bc, \wt d)$ is a zero order boundary operator acting on d.

\medskip\noindent
Let $\wt z= (\wt \bc, \wt d, \wt v)\in X_{\gamma,\mu}$ and $g\in Y_{\gamma,\mu }$ be given. We then set
\begin{equation}
\label{right-inverse}
 \cR(\wt z) g =(\bc, d, v) := ( {\mathfrak R}^{\bc}(\wt \bc, \wt d ) g, 0,0)\in  X_{\gamma,\mu}\
 \end{equation} 
where
${\mathfrak R}^{\bc } (\wt \bc, \wt d)\in \cL(Y_{\gamma, \mu},W^{2(\mu-1/p)}_p (\Omega; \R^N))$ is a a right-inverse for the boundary operator $ \fB(\wt \bc, \wt d).$
For the existence of ${\mathfrak R} ^{\bc}(\wt z) $ we refer to \cite[Proposition 2.5.1]{Mey10}.
It follows from \eqref{B+D} and \eqref{right-inverse} that
\begin{equation*}
\begin{aligned}
\cB^\prime (\wt \bc, \wt d) \cR (\wt \bc, \wt d)g 
= \fB(\wt \bc, \wt d) {\mathfrak R}^{\bc}(\tilde z) g + \fD(\wt \bc, \wt d) 0 = g 
\end{aligned}
\end{equation*}
for each $g\in Y_{\gamma ,\mu}$.
This shows that $\cR(\wt z)$ is a right inverse for $\cB^\prime(\wt z)$.
\end{proof}

\section{Proof of the energy estimate}
\label{Appendix B}
Here we follow the strategy used in \cite{FRSZ21}, where a similar situation is considered (though with periodic boundary conditions and a modified equation for the director $d$).
For the reader's convenience, we include a complete proof.
For brevity, we omit the integration symbol $dx$ throughout the proof.

\medskip\noindent
We know from Theorem~\ref{thm: main} that $z(\cdot, z^0)$ is smooth in time on $(0, T_+(z^0))$.  The first line in  $\eqref{main-PDE}$ then shows that 
$\Phi$ is smooth in time as well. One then verifies that the solution has enough regularity to carry out the computations below.

\medskip\noindent
Multiplying $\eqref{main-PDE}_1$ by $\mu_k$, integrating by parts using the boundary conditions \eqref{main-PDE}$_2$ and summing in $c_k$, we obtain 
\begin{equation*}
\frac{d}{dt}\left(\int_{\Omega}\sum_k c_k(\ln c_k-1)  \right)+\int_{\Omega}(\pa_t\rho)\Phi  + \int_{\Omega}(v\cdot\na \rho)\Phi  
+\alpha\sum_k \int_{\Omega}c_k|\na \mu_k|^2  \le 0.
\end{equation*}
Here we set $\rho =\sum z_k c_k$ and also employed  ${\rm div}\,v=0$ and $v=0$ on $\pa\Omega$ to conclude that  
$$
\int_\Omega (v\cdot \nabla c_k ) \ln c_k  = \int_\Omega v\cdot \nabla (c_k(\ln c_k -1))   =0.
$$
Using that $\int_\Omega c_k$ is constant in time, we then have
\begin{equation}
\label{npenergy}
 \frac{d}{dt}\left(\int_{\Omega}\sum_k c_k\ln c_k  \right)+\int_{\Omega}(\pa_t\rho)\Phi   
 +\int_{\Omega}(v\cdot\na \rho)\Phi  +\alpha\sum_k \int_{\Omega}c_k|\na \mu_k|^2  \le 0.
\end{equation}
Next, multiplying \eqref{main-PDE}$_2$ by $\pa_t\Phi$ and integrating by parts, we obtain (noting that $\pa_t\Phi_{|\pa\Omega}=0$ and $\ve$ is symmetric),
\begin{equation}
\label{poisson2}
    \frac{1}{2} \frac{d}{dt}\int_{\Omega}\ve(d)\na\Phi\cdot\na\Phi  =\int_{\Omega}\rho\pa_t\Phi   +\frac{\ve_a}{2}\int_{\Omega}\pa_t(d\otimes d)\na\Phi\cdot\na\Phi  .
\end{equation}
Returning to \eqref{npenergy} and using \eqref{poisson2} 
we have
\begin{equation}
\label{npenergy2}
\begin{aligned}
& \frac{d}{dt}\int_{\Omega}\sum_k c_k\ln c_k   +\frac{d}{dt}\int_{\Omega}\rho\Phi  - \frac{1}{2}\frac{d}{dt}\int_{\Omega}\ve(d)\na\Phi\cdot\na\Phi  \\
&\quad +\frac{\ve_a}{2}\int_{\Omega}\pa_t(d\otimes d)\na\Phi\cdot\na\Phi  
+\int_{\Omega}(v\cdot\na \rho)\Phi   +\alpha\sum_k \int_{\Omega}c_k|\na \mu_k|^2  \le 0. 
\end{aligned}
\end{equation}
Multiplying \eqref{main-PDE}$_3$ by $\Phi$ and integrating by parts yields
$$\int_{\Omega}\ve(d)\na\Phi\cdot\na\Phi=\int_{\Omega}\rho\Phi$$
and substituting this relation into \eqref{npenergy2} results in
\begin{equation*}
\begin{aligned}
    &\frac{d}{dt}\left(\int_{\Omega}\sum_k c_k\ln c_k+\frac{1}{2}\int_{\Omega}\ve(d)\na\Phi\cdot\na\Phi\right)+\frac{\ve_a}{2}\int_{\Omega}\pa_t(d\otimes d)\na\Phi\cdot\na\Phi\\
&\qquad +\int_{\Omega}(v\cdot\na \rho)\Phi+\alpha\sum_k \int_{\Omega}c_k|\na \mu_k|^2\le 0 .
\end{aligned}
\end{equation*}
Taking the inner product of $\eqref{main-PDE}_5$ with $v$ and integrating by parts, we obtain
\begin{equation}
\label{nsenergy}
\begin{aligned}
    &\frac{1}{2}\frac{d}{dt}\|v\|_{L^2}^2+\alpha_4 \int_\Omega | D(v)|^2 \\
    & = \int_{\Omega}(\na d\odot\na d):\na v  -\int_{\Omega}  (\na\Phi\otimes\na\Phi)\ve(d):\na v   \\
    & - \!\int_{\Omega}\!(\alpha_1(D(v)d\cdot d)d\otimes d+\alpha_2 \od\otimes d+\alpha_3 d\otimes \od+\alpha_5 D(v)d\otimes d+\alpha_6 d\otimes D(v)d)\!:\!\na v .
\end{aligned}
\end{equation}
Next, taking the inner product of the equation for $d$ in $\eqref{main-PDE}_8$ with $\dot{d}=\pa_t d+v\cdot\na d$ yields
\begin{equation}
\label{direnergy}
    \begin{aligned}
        &\int_{\Omega}(\gamma_1\od+\gamma_2D(v)d)\cdot \dot{d} +\frac{1}{2}\frac{d}{dt}\int_\Omega |\na d|^2 +\int_{\Omega}\na d : \na(v\cdot\na d) \\
        & \quad =\frac{\ve_a}{2}\int_{\Omega}(\na\Phi\otimes\na\Phi):\pa_t(d\otimes d)  +\ve_a\int_{\Omega}((\na\Phi\otimes\na\Phi)d)\cdot(v\cdot\na d) .
    \end{aligned}
\end{equation}
In deriving \eqref{direnergy} we used the fact that $|d|=1$ and \eqref{POmega} to conclude that 
$$
\gamma_2 P(d) D(v) d \cdot \dot d =  \gamma_2D(v) d \cdot \dot d,  \quad
\ve_a(P(d) \nabla (\Phi \otimes \nabla \Phi)d  \cdot  \dot d =  \ve_a\nabla (\Phi \otimes \nabla \Phi)d  \cdot  \dot d 
$$ 
and $| \nabla d|^2d \cdot \dot d =\frac{1}{2}| \nabla d|^2 \big[ \partial_t |d| ^2 + v\cdot \nabla |d|^2 \big ] =0$.  Moreover, we used the boundary condition 
$\partial_\nu d=0$ to obtain
$$
\int_\Omega \Delta d  \cdot \partial_t d  = \int_{\pa\Omega} \partial_\nu d \cdot \partial_t d - \frac{1}{2}\frac{d}{dt} \int_\Omega | \nabla d |^2   
=   - \frac{1}{2}\frac{d}{dt} \int_\Omega | \nabla d |^2    
$$
and
$$
\int_\Omega \Delta d \cdot (v\cdot \nabla d )  
= \int_{\pa\Omega} \partial_\nu d \cdot (v\cdot \nabla d) - \int_\Omega \nabla d: \nabla (v\cdot \nabla d)  
= - \int_\Omega \nabla d: \nabla (v\cdot \nabla d) . 
$$
The following observation will be very useful to proceed with the energy estimate.
\begin{lemma}
It holds that
\begin{equation*}
\begin{aligned}
&(\gamma_1\od+\gamma_2D(v)d)\cdot \dot{d} \\
&\quad + \left(\alpha_1(D(v)d\cdot d)d\otimes d+\alpha_2 \od\otimes d+\alpha_3 d\otimes \od+\alpha_5 D(v)d\otimes d+\alpha_6 d\otimes D(v)d):\na v\right) \\
& \quad  = \left(\alpha_1(d\cdot D(v)d)^2+(\gamma_2+\alpha_2+\alpha_3)(\od\cdot D(v)d)+(\alpha_5+\alpha_6)|D(v)d|^2+\gamma_1|\od|^2\right).
\end{aligned}
\end{equation*}
Note that the expression in the second line appears in \eqref{nsenergy}.
\end{lemma}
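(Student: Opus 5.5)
The identity is purely algebraic and pointwise. I would prove it by expanding both sides with the decompositions $\dot d=\od+\Omega(v)d$ and $\nabla v=D(v)+\Omega(v)$, and then using the compatibility relations \eqref{gamma-alpha} to cancel the cross terms. Throughout, write $D=D(v)$ and $\Omega=\Omega(v)$.

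\emph{Structural facts.} First I record the facts that will be used.
\begin{itemize}
\item By \eqref{circ-d}, $\dot d=\partial_t d+v\cdot\nabla d=\od+\Omega d$.
\item For vectors $a,b$ we have $(a\otimes b):\nabla v=a\cdot(\nabla v)b=a\cdot Db+a\cdot\Omega b$.
\item $D$ is symmetric and $\Omega$ is antisymmetric, so $a\cdot\Omega b=-b\cdot\Omega a$ and $d\cdot\Omega d=0$.
\item Since $|d|=1$, \eqref{POmega} gives $P(d)\od=\od$, hence $d\cdot\od=0$. This fact is only needed if one wants to see that no further terms survive; in the computation below it is in fact not required.
\end{itemize}

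\emph{Expansion of each term.} Next I expand each contribution separately.
\begin{itemize}
\item The director term: $(\gamma_1\od+\gamma_2Dd)\cdot(\od+\Omega d)=\gamma_1|\od|^2+\gamma_1\,\od\cdot\Omega d+\gamma_2\,Dd\cdot\od+\gamma_2\,Dd\cdot\Omega d$.
\item The $\alpha_1$ term gives $(Dd\cdot d)\,d\cdot(D+\Omega)d=(d\cdot Dd)^2$, since $d\cdot\Omega d=0$.
\item The $\alpha_2$ term gives $\od\cdot Dd+\od\cdot\Omega d$.
\item The $\alpha_3$ term gives $d\cdot D\od+d\cdot\Omega\od=Dd\cdot\od-\od\cdot\Omega d$.
\item The $\alpha_5$ term gives $|Dd|^2+Dd\cdot\Omega d$.
\item The $\alpha_6$ term gives $d\cdot D(Dd)+d\cdot\Omega Dd=|Dd|^2-Dd\cdot\Omega d$.
\end{itemize}

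\emph{Collecting coefficients.} Grouping the terms of the total sum:
\begin{itemize}
\item $\od\cdot\Omega d$ has coefficient $\gamma_1+\alpha_2-\alpha_3$, which vanishes by \eqref{gamma-alpha}.
\item $Dd\cdot\Omega d$ has coefficient $\gamma_2+\alpha_5-\alpha_6$, which also vanishes by \eqref{gamma-alpha}.
\item The surviving terms are $\alpha_1(d\cdot Dd)^2$, $(\gamma_2+\alpha_2+\alpha_3)\,\od\cdot Dd$, $(\alpha_5+\alpha_6)|Dd|^2$ and $\gamma_1|\od|^2$.
\end{itemize}
This is exactly the right-hand side of the claimed identity.

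There is no real obstacle here. The only point requiring care is the sign bookkeeping for the antisymmetric part in the $\alpha_3$ and $\alpha_6$ terms, where one uses $d\cdot\Omega w=-w\cdot\Omega d$, together with the paper's convention $[\nabla v]_{ij}=\partial_jv_i$. That convention guarantees $(a\otimes b):\nabla v=a\cdot(\nabla v)b$, consistent with \eqref{R0}.
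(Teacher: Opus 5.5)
Your proof is correct and follows the paper's argument. Both expand using $\dot d=\od+\Omega(v)d$ and $\nabla v=D(v)+\Omega(v)$, exploit the (anti)symmetry of $D(v)$ and $\Omega(v)$ term by term, and cancel the cross terms $\od\cdot\Omega(v)d$ and $D(v)d\cdot\Omega(v)d$ via \eqref{gamma-alpha}; your remark that $|d|=1$ is not needed for this purely algebraic identity is also accurate.
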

\begin{proof}
We calculate the sum piece by piece. Also, for brevity, we omit the dependence of $D$ and $\Omega$ on $v$. We first note that
\begin{equation}
\label{AA}
\begin{aligned}
        (\gamma_1\od+\gamma_2Dd)\cdot\dot{d} &=(\gamma_1\od+\gamma_2Dd)\cdot(\od+\Omega d)\\
        &=\gamma_1|\mathring d|^2+\gamma_2(\od\cdot Dd)+\gamma_2(Dd\cdot\Omega d)+\gamma_1(\od\cdot\Omega d).
    \end{aligned}
    \end{equation}
We have
\begin{equation}
  \label{ABa}
    \begin{aligned}
        \alpha_1(Dd\cdot d)(d\otimes d):\na v &=\alpha_1(Dd\cdot d)(d\otimes d):(D+\Omega)\\
        & =\alpha_1(Dd\cdot d)(d\otimes d):D\\
        & =\alpha_1(Dd\cdot d)^2,
    \end{aligned}
    \end{equation}
 where the second equality follows from the fact that the Frobenius inner product between a symmetric matrix and a skew symmetric matrix is 0.
   
\medskip\noindent Observe that
     $$(\alpha_2\od\otimes d+\alpha_3 d\otimes \od):\na v=(\alpha_2\od\otimes d+\alpha_3 d\otimes \od):(D+\Omega).$$
     Considering all the terms on the right side separately results in 
    \begin{itemize}
    \vspace{1mm}
        \item $\alpha_2(\od\otimes d):D=\alpha_2  \od_i d_j D_{ij}   =\alpha_2 (\od\cdot Dd)$.
       \vspace{1mm}
        \item $\alpha_3 (d\otimes\od):D=\alpha_3 d_i \od_j D_{ij}=\alpha_3 d_j \od_iD_{ji}=\alpha_3 d_j \od_iD_{ij}=\alpha_3(\od\cdot Dd)$, where the third equality follows from the symmetry of $D$.
       \vspace{1mm} 
        \item $\alpha_2(\od\otimes d):\Omega=\alpha_2 \od_id_j\Omega_{ij}= \alpha_2(\od\cdot \Omega d)$.
        \vspace{1mm} 
        \item $\alpha_3(d\otimes \od):\Omega = \alpha_3 d_i\od_j\Omega_{ij}=\alpha_3 d_j \od_i\Omega_{ji}=-\alpha_3 d_j\od_i\Omega_{ij}=-\alpha_3(\od\cdot\Omega d)$ where the third equality follows from the skew symmetry of $\Omega$.
    \end{itemize}
    \vspace{1mm}
    Adding the terms above and using \eqref{gamma-alpha} yields
    \begin{equation}
    \label{AC}
    (\alpha_2\od\otimes d+\alpha_3 d\otimes \od):\na v =(\alpha_2+\alpha_3)(\od\cdot Dd)-\gamma_1(\od\cdot\Omega d).
    \end{equation}
  Finally, we consider the term
   $$(\alpha_5 (Dd\otimes d)+\alpha_6 (d\otimes Dd)):\na v=(\alpha_5 (Dd\otimes d)+\alpha_6 (d\otimes Dd)):(D+\Omega).$$
    Here we get
    \begin{itemize}
    \vspace{1mm}
        \item $\alpha_5 (Dd\otimes d):D=\alpha_5 D_{ij}d_j d_k D_{ik}=\alpha_5|Dd|^2$.
     \vspace{1mm}     
        \item $\alpha_6(d\otimes Dd):D=\alpha_6 d_i D_{jk}d_k D_{ij}=\alpha_6  D_{jk}d_k D_{ji}d_i=\alpha_6|Dd|^2 $.
       \vspace{1mm}
        \item $\alpha_5(Dd\otimes d):\Omega=\alpha_5 D_{ij}d_j d_k\Omega_{ik}=\alpha_5 (Dd\cdot \Omega d)$.
        \vspace{1mm}
       \item $\alpha_6(d\otimes Dd):\Omega = \alpha_6 d_iD_{jk}d_k\Omega_{ij}=-\alpha_6 d_iD_{jk}d_k\Omega_{ji}=-\alpha_6(Dd\cdot\Omega d)$.
    \end{itemize}
    Adding the terms above, we get with \eqref{gamma-alpha}
    \begin{equation}
    \label{AD}
    (\alpha_5 (Dd\otimes d)+\alpha_6 (d\otimes Dd)):\na v = \beta |Dd|^2-\gamma_2(Dd\cdot\Omega d),
    \end{equation}
where $\beta = \alpha_5 + \alpha_6$. Adding the terms in \eqref{AA}-\eqref{AD} yields the assertion.
 \end{proof}
Thus, so far, we have
\begin{equation}\label{npenergy4}
    \begin{aligned}
        &\frac{d}{dt}\left(\int_{\Omega}\sum_k c_k\ln c_k   +\frac{1}{2}\int_{\Omega}\big(\ve(d)\na\Phi\cdot\na\Phi+|\na d|^2+|v|^2) \right) +\alpha_4\int_\Omega |D(v)|^2  \\
        &+\int_{\Omega}\left(\alpha_1(d\cdot D(v)d)^2+(\gamma_2+\alpha_2+\alpha_3)(\od\cdot D(v)d)+ \beta |D(v)d|^2+\gamma_1|\od|^2\right)\\
&+\int_{\Omega}(v\cdot\na \rho)\Phi +\alpha\sum_k \int_{\Omega}c_k|\na \mu_k|^2  \\
&\le \ve_a\int_{\Omega}((\na\Phi\otimes\na\Phi)d)\cdot(v\cdot\na d)  -\int_{\Omega}  (\na\Phi\otimes\na\Phi)\ve(d):\na v ,
    \end{aligned}
\end{equation}
where we note the cancellations
$$
\frac{\ve_a}{2}\int_{\Omega}\pa_t(d\otimes d)\na\Phi\cdot\na\Phi   - \frac{\ve_a}{2}\int_{\Omega}(\na\Phi\otimes\na\Phi):\pa_t(d\otimes d) =0,
$$
and (using ${\rm div}\, v=0$, $v=0$ on $\pa\Omega$)
$$
\int_{\Omega}\na d:\na(v\cdot\na d)  -\int_{\Omega}(\na d\odot\na d):\na v   =0.
$$
Next, we multiply $\eqref{main-PDE}_3$ by $v\cdot\na\Phi$ and integrate by parts to obtain
$$
\int_{\Omega}\ve(d)\na\Phi\cdot\na(v\cdot\na\Phi)   =\int_{\Omega}\rho v\cdot\na\Phi   =-\int_{\Omega}(v\cdot\na\rho)\Phi   .
$$
Substituting this relation into \eqref{npenergy4} results in 
\begin{equation}
\label{npenergy5}
    \begin{aligned}
        &\frac{d}{dt}\left(\int_{\Omega}\sum_k c_k\ln c_k  +\frac{1}{2}\int_{\Omega}\big(\ve(d)\na\Phi\cdot\na\Phi+|\na d|^2+ |v|^2)  \right)
        + \alpha_4 \int_\Omega |D(v)|^2  \\
        &+\int_{\Omega}\left(\alpha_1(d\cdot D(v)d)^2+(\gamma_2+\alpha_2+\alpha_3)(\od\cdot D(v)d)+\beta |D(v)d|^2+\gamma_1|\od|^2\right)\\
&-\int_{\Omega}\ve(d)\na\Phi\cdot\na(v\cdot\na\Phi) +\alpha\sum_k \int_{\Omega}c_k|\na \mu_k|^2  \\
&\le \ve_a\int_{\Omega}((\na\Phi\otimes\na\Phi)d)\cdot(v\cdot\na d) -\int_{\Omega}  (\na\Phi\otimes\na\Phi)\ve(d) :\na v. 
    \end{aligned}
\end{equation}
Let us observe that, using ${\rm div}\, v=0$ and $v|_{\pa\Omega}=0$,  we have
$$
\int_{\Omega}\na\Phi\cdot\na(v\cdot\na\Phi) =\int_{\Omega}(\na\Phi\otimes\na\Phi):\na v .
$$
Thus, from \eqref{npenergy5}, we obtain
\begin{equation}
\label{npenergy6}
    \begin{aligned}
       &\frac{d}{dt}\left(\int_{\Omega}\sum_k c_k\ln c_k  +\frac{1}{2}\int_{\Omega}\big(\ve(d)\na\Phi\cdot\na\Phi+|\na d|^2+ |v|^2)  \right)
        + \alpha_4 \int_\Omega |D(v)|^2  \\
        &+\int_{\Omega}\left(\alpha_1(d\cdot D(v)d)^2+(\gamma_2+\alpha_2+\alpha_3)(\od\cdot D(v)d)+\beta\, |D(v)d|^2+\gamma_1|\od|^2\right)\\
&-\ve_a\int_{\Omega}(d\otimes d)\na\Phi\cdot\na(v\cdot\na\Phi) +\alpha\sum_k \int_{\Omega}c_k|\na \mu_k|^2 \\
&\le \ve_a\int_{\Omega}((\na\Phi\otimes\na\Phi)d)\cdot(v\cdot\na d) -\ve_a\int_{\Omega}  (\na\Phi\otimes\na\Phi)(d\otimes d):\na v.
    \end{aligned}
\end{equation}
It remains to show that 
\begin{equation}
\label{identity}
\begin{aligned}
    & \int_{\Omega}(\na\Phi\otimes\na\Phi) (d\otimes d):\na v   \\
    &\qquad =\int_{\Omega}((\na\Phi\otimes\na\Phi)d)\cdot(v\cdot\na d)   +\int_{\Omega}(d\otimes d)\na\Phi\cdot\na(v\cdot\na\Phi)  .
\end{aligned}
\end{equation}
For this, we first note that
\begin{align*}
    \int_{\Omega} (d\otimes d) \na\Phi\cdot\na(v\cdot\na\Phi)  & =\int_{\Omega}d_i d_j\pa_j\Phi\pa_i(v_k\pa_k\Phi) \\
    & =\int_{\Omega}d_id_j\pa_j\Phi\pa_iv_k\pa_k\Phi  +\int_{\Omega}d_id_j\pa_j\Phi v_k\pa_k\pa_i\Phi   \\
    & =\int_{\Omega}\pa_k\Phi\pa_j\Phi d_j d_i \pa_iv_k   +\int_{\Omega}d_id_j\pa_j\Phi v_k\pa_k\pa_i\Phi  \\
    & =\int_\Omega (\na\Phi\otimes\na\Phi)(d\otimes d):\na v  +\int_\Omega d_id_j\pa_j \Phi v_k\pa_k\pa_i\Phi  .
\end{align*}
Thus, \eqref{identity} reduces to
\begin{align}
\label{identity2}
    0=\int_\Omega ((\na\Phi\otimes\na\Phi)d)\cdot(v\cdot\na d)  +\int_\Omega d_id_j\pa_j\Phi v_k\pa_k\pa_i\Phi   .
\end{align}
We compute, integrating by parts, using ${\rm div}\, v = 0$, and permuting indices (and noting that no boundary terms occur because $v_{|\pa\Omega}=0$):
\begin{align*}
    \int_\Omega d_id_j\pa_j\Phi v_k\pa_k\pa_i\Phi& =-\int_\Omega \pa_k d_i d_j\pa_j\Phi v_k\pa_i \Phi -\int_\Omega d_i \pa_k d_j\pa_j\Phi v_k\pa_i\Phi-\int_\Omega d_i d_j\pa_j\pa_k\Phi v_k\pa_i \Phi\\
    & =-\int_\Omega \pa_k d_i d_j\pa_j\Phi v_k\pa_i \Phi -\int_\Omega d_j \pa_k d_i\pa_i\Phi v_k\pa_j\Phi-\int_\Omega d_i d_j\pa_i\pa_k\Phi v_k\pa_j \Phi.
\end{align*}
It follows from the above that
\begin{align*}
    2\int_\Omega d_id_j\pa_j\Phi v_k\pa_k\pa_i\Phi =-2\int_\Omega \pa_k d_i d_j\pa_j\Phi v_k\pa_i \Phi , 
\end{align*}
which is equivalent to \eqref{identity2}. The assertion of Proposition~\ref{energy-est} follows now from \eqref{npenergy6} and \eqref{identity}.
\qquad $\square$ 

\bigskip\noindent
{\bf Conflict of interest:} The authors assert that there is no conflict of interest to declare.

\medskip\noindent
{\bf Data availability statement:}
Data sharing is not applicable as no datasets were generated or analyzed for the manuscript.

\bigskip\noindent

\end{document}